\numberwithin{equation}{section}
\newcommand{\p}{\partial}
 \renewcommand{\leq}{\leqslant}
\renewcommand{\geq}{\geqslant}
\renewcommand{\div}{\operatorname{div}}
\renewcommand{\O}{\Omega}
\newcommand{\tr}{\operatorname{tr}}
\def\({\left(}
\def\){\right)}
\def\[{\left[}
\def\]{\right]}
\def\l|{\left|}
\def\r|{\right|}
 \newtheorem{theorem}{{\bf Theorem}}[section]
\newtheorem{lemma}[theorem]{{\bf Lemma}}
\newtheorem{prop}[theorem]{{\bf Proposition}}
\newtheorem{remark}[theorem]{{\bf Remark}}
\def\grad{\nabla}
\DeclareMathOperator{\diverg}{div}
\newcommand{\Lsym}[1]{D(#1)}  
\newcommand{\Lskew}[1]{W(#1)}
\def\I{\mathbb{I}_d}  
\newcommand{\barg}[1]{\bigl(#1\bigr)}
\newcommand{\Barg}[1]{\Bigl(#1\Bigr)}
\newcommand{\bset}[1]{\bigl\{#1\bigr\}}
\newcommand{\bnorm}[1]{\bigl\|#1\bigr\|}
\newcommand{\Bnorm}[1]{\Bigl\|#1\Bigr\|}
\newcommand{\bbar}[1]{\bigl|#1\bigr|}
\newcommand{\bsqb}[1]{\bigl[#1\bigr]}
\newcommand{\dv}[1]{\,{\rm d}#1}
\newcommand{\bref}[1]{(\ref{#1})}
\newcommand{\ts}[1]{\Omega_{#1}}  
\newcommand{\tildeQ}{Q_0}
\def\weakly{\rightharpoonup}
\def\weaklystar{ \stackrel{\ast}{\rightharpoonup}}
\def\calF{\mathcal{F}}
\def\R{\mathbb{R}}
\def\BRXT{{B}_{X_T}(0,R)}
\def\BRXTzero{{B}_{X_{T,0}}(0,R)}
\def\H{L^2_\sigma(\Omega)}
\def\V{H^1_{0,\sigma}(\Omega)}
\newcommand{\dt}[1]{\partial_t#1} 
\def\epsilon{\varepsilon}
\def\Teps{T^{\epsilon}}
\begin{document}
\title[Beris--Edwards model]{Strong solutions for the \\
Beris-Edwards model for nematic liquid crystals with homogeneous Dirichlet boundary conditions}

\author{Helmut Abels}
\author{Georg Dolzmann}
\author{YuNing Liu}
\address{Fakult\"at f\"ur Mathematik, Universit\"at Regensburg, 93040 Regensburg, Germany.}
\email{helmut.abels@mathematik.uni-regensburg.de}
%
%

\date{\today}

\keywords{Beris-Edwards model, liquid crystals, Navier-Stokes equations, Q-tensor,
strong solutions}
\subjclass[2010]{Primary 35Q35; 
Secondary: 35Q30, 
76D03, 
76D05 
}

\thanks{The third author gratefully acknowledges partial financial support
by the SPP 1506 ``Transport Processes at Fluidic Interfaces'' of the German Science
Foundation (DFG) through the grant AB285/4-2}

\begin{abstract}

Existence and uniqueness of local strong solution for the Beris--Edwards model for
nematic liquid crystals, which couples the Navier-Stokes equations with an evolution
equation for the $Q$-tensor, is established
on a bounded domain $\Omega\subset\mathbb{R}^d$ in the case of homogeneous Dirichlet
boundary conditions. The classical Beris--Edwards model is enriched by including
a dependence of the fluid viscosity on the $Q$-tensor.
The proof is based on a linearization of the system and Banach's fixed-point
theorem.

\end{abstract}

\maketitle

\section{Introduction}
Most of the classical models in the context of continuum mechanics
for the description of nematic liquid crystals fall into one of three major models
classes, the
Oseen-Frank model~\cite{Oseen1933,Zoecher1933,Frank1958},
the Ericksen-Leslie
model~\cite{EricksenARMA1962Hydrostatics,EricksenAdvLiqCryst1976,LeslieQJMAM1966,
LeslieARMA1968},
and the Beris-Edwards model~\cite{BerisEdwards1994}. The Oseen-Frank model
is a variational model in which the configurations of liquid crystals are
described by a director field $n\in\mathbb{S}^{d-1}$
and in which observed configurations are
explained as minimizers of a free energy functional. In the Ericksen-Leslie model,
the evolution of the director field is coupled with an evolution equation
for the underlying flow field which is given by the Navier-Stokes equation
with an additional forcing term. The most comprehensive model is the Beris-Edwards
model in which the director field is replaced by a $Q$-tensor
field~\cite{DeGennesProst1995}, thus allowing for a variable degree of order
in the material.
A detailed discussion of
this model and its connections to closely related models can be found
in~\cite{SonnetEtAl}, see
also~\cite{DennistonEtAlPRE2001,Hess1976,LubenskyStarkPRE2005,
OlmstedGoldbartPRA1990,SonnetEtAl,TothEtAlPRE2003},
and the literature therein.

In the $Q$-tensor models~\cite{DeGennesProst1995}, the unit director field $n$ is replaced by
a symmetric, traceless $d\times d$ tensor $Q$. This tensor is said to be uniaxial
if it has two equal non-zero eigenvalues and in this case it can be represented as
\begin{equation*}
  Q=s\left(n\otimes n-\frac 1d\,\I\right)\,,
\end{equation*}
where the scalar order parameter $s\in [-\frac 12,1]$  measures the
degree of orientational ordering. Connections between the director field $n$ and
a corresponding uniaxial $Q$-tensor field have been investigated
in~\cite{BallZarnescuMCLC2008,BallZarnescuARMA2011}.

In order to formulate the system of nonlinear partial differential equations
describing the liquid crystal flow with velocity $u$ and pressure $p$
and its orientation tensor $Q$,
we assume that $\Omega\subset\R^d$, $d=2,3$, is a bounded domain with boundary of class $C^4$, 
that $T>0$ is the time
horizon, that $\ts{T}=(0,T)\times\Omega$ is the time-space cylinder, and
that $a$, $b$, $c$, $\lambda$ and $\Gamma$ are positive constants.
As usual, the
material is supposed to be incompressible and the system is considered at
a fixed temperature which is not explicitly included in our notation.

The Beris-Edwards model leads to the following system, which contains the Navier-Stokes equations in $\ts{T}$
with variable viscosity and
an additional forcing term and an evolution
equation of parabolic type for the order parameter $Q$,
\begin{align}\label{strongformeqn}
\begin{split}
 \dt{u} + (u\cdot \grad)u  + \grad p
&\, = \diverg\barg{\nu(Q)\Lsym{u} }+ \diverg\barg{\sigma(Q,Q)+ \tau(Q)}\,,\\
\diverg u &\, = 0\,, \\
 \dt{Q} + \barg{  u \cdot \grad } Q - S(\grad u, Q) &\, = \Gamma H(Q)
 \end{split}
\end{align}
subject to the initial and boundary conditions,
\begin{alignat}{4}\label{strongformeqnbc}
 Q|_{t=0} &\, = Q_0&&\text{ in }\Omega\,,
\qquad & Q\bigr|_{\partial\Omega\times(0,T)}&\, = 0\,, \\\label{strongformeqnbc'}
 u|_{t=0} &\, = u_0&&\text{ in }\Omega\,,\qquad & u\bigr|_{\partial\Omega\times(0,T)}&\, = 0\,,
\end{alignat}
with tensors $\sigma$, $\tau$, and $S$ given by
\begin{align}\label{tensors}
\begin{split}
 \sigma(Q_1,Q_2) &\, = Q_1 \Delta Q_2 - (\Delta Q_2) Q_1\,,
\quad \tau(Q) \, =-\lambda \grad Q\odot \grad Q \,,\\
 S(\nabla u, Q) &\, = S(\Lskew{u} , Q) = \Lskew{u}  Q - Q\Lskew{u} \,,
\end{split}
\end{align}
where
\begin{align}\label{Ldecomposition}
\Lsym{u}  = \frac{1}{2}\barg{ \nabla u + (\nabla u)^T }\,,\quad
 \Lskew{u} = \frac{1}{2}\barg{\nabla u - (\nabla u)^T}
\end{align}
are the stretch and the vorticity tensor, respectively.
The forcing term in the evolution of the order parameter is given by
\begin{align}\label{strongforcing}
 H=H(Q) &\, =\lambda\Delta Q - aQ + b\barg{Q^2 - \frac{1}{d}\,\tr(Q^2)\I} - c\tr(Q^2) Q\,,
\end{align}
and we shall denote by $L$ the lower-order terms in \eqref{strongforcing}, i.e.,
\begin{align}\label{loworder}
 L=L(Q) &\, = - aQ + b\barg{Q^2 - \frac{1}{d}\,\tr(Q^2)\I} - c\tr(Q^2) Q\, .
\end{align}
Note that \eqref{strongforcing} is
related to the variational derivative of the free energy functional
 which uses
the one-constant approximation for the Oseen-Frank energy of liquid crystals
together with a Landau-DeGennes expression for the bulk energy,
\begin{align}\label{laudau}
 \calF(Q) = \int_\Omega \barg{\frac{\lambda}{2}\, |\nabla Q|^2 + f_B(Q)}\dv{x}
\end{align}
where the bulk energy $f_B$ is given by
\begin{align*}
 f_B(Q) = \frac{a}{2}\,\tr(Q^2) -\frac{b}{3}\,\tr(Q^3)
+\frac{c}{4}\,\tr(Q^4)  \,.
\end{align*}
We see from \eqref{loworder} that
\begin{equation}\label{lowerder}
  L+\frac bd\tr(Q^2)\I=-\nabla_Q f_B(Q).
\end{equation}
The constitutive assumptions~\bref{tensors} and~\bref{strongforcing} are special cases of
more general expressions~\cite{DennistonEtAlPRE2001} for the corresponding
tensors and do not include
alignment effects due to the flow. They correspond to the assumption that
the coupling parameter $\xi$ between the order tensor $Q$ and the stretch
tensor $\Lsym{u}$ is zero.

It is known that the viscosity of a liquid crystal may depend on its local orientation
with respect to the fluid flow or on the flow
rate~\cite{DeGennesProst1995,DennistionEtAlCTPS2001}. Moreover,
the classical derivation of the constitutive equations~\cite{LeslieARMA1968}
identifies the viscous stress tensor as a sum of contributions each of which has
its own hydrodynamic viscosity coefficient. The temperature and order parameter
dependence of these nematic viscosities was, e.g., discussed
in~\cite{DiogoMartinsJPhysique1982}. Mathematically, it is a challenging
task to include the full dependence of the viscosity coefficients on the
order parameter. As a first step towards this goal, we include a
dependence of the fluid viscosity $\nu$ on the order parameter $Q$ via
$\nu=\nu(Q)$. Even with this very weak coupling of the director field with
the fluid viscosity, the analysis requires the assumption
\begin{equation}\label{viscosity}
  \nu\in C^2(\mathbb{R}^{d\times d}),\quad 0<c_0\leq \nu(\cdot)\leq c_1<\infty
\end{equation}
and leads to a significant number of additional terms in our estimates. More
general dependencies have, e.g., been explored in~\cite{CaldererLiuSIAMJApplMath2000}.

Global weak solutions for the system~\bref{strongformeqn} -- \bref{strongforcing}
with constant viscosity $\nu$ and $\Omega=\R^d$ were constructed
in~\cite{PaicuZarnescuARMA2012}. The full system with coupling parameter $\xi\neq 0$ but
sufficiently small is treated in~\cite{PaicuZarnescuSIMA2011} if $\Omega=\R^d$.
 In \cite{Wilkinson} Wilkinson studied the system \eqref{strongformeqn}-\eqref{tensors} under periodic boundary conditions in the case that $f_B$ is replaced by a certain singular potential, which guarantees that $Q$ attains only physically reasonable values. For general $\xi$ existence of weak solutions was established. Moreover, he proved higher regularity in the case of two space dimensions and $\xi=0$. Finally, Feireisl et al.~\cite{FeireislEtAlQTensor} derived a non-isothermal variant of the Beris-Edwards system and proved existence of weak solutions for this system in the case of a singular potential and for periodic boundary conditions. Recently, Wang et al.\ establish in \cite{wangzz} a rigorous convergence result from the Beris-Edwards system to the Ericksen-Leslie system, which is widely investigated in the literature. 
First results in the case of a bouded domain were obtained  by Guill\'en-Gonz\'alez and Rodr\'iguez-Bellido~\cite{GuillenBellidoWeak} in the case $\xi=0$, where existence of weak solutions in bounded domains with inhomogeneous Dirichlet and homogeneous Neumann boundary conditions was proved. Moreover, the authors proved a Serrin-type uniqueness criterion. In \cite{ADL} the authors prove existence of weak solutions and well-posedness locally in time with higher regularity in time in the case of inhomogeneous mixed Dirichlet/Neumann boundary conditions in a bounded domain.

The main novelty of the present contribution concerns short time existence for strong solutions in bounded domains in the case of homogeneous Dirichlet/Neumann boundary conditions:
\begin{theorem}\label{maintheorem}
For any $u_0\in H^1_{0,\sigma}(\O)$ and $Q_0\in H^1_0(\O;\mathbb{S}_0)\cap H^2(\O;\mathbb{S}_0)$,
there exists some $T>0$ and a unique solution $(u,Q)$
of the system~\eqref{strongformeqn} with
\begin{align*}
 u\in H^1(0,T;L^2_{\sigma}(\O))\cap L^2(0,T;H^2(\O;\R^d))\,,\
Q\in H^1(0,T;H^1_0(\O;\mathbb{S}_0))\cap L^2(0,T;H^3(\O;\mathbb{S}_0))
\end{align*}
satisfying the initial and
boundary conditions~\eqref{strongformeqnbc}-\eqref{strongformeqnbc'}.
\end{theorem}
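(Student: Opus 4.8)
The plan is to obtain $(u,Q)$ as the fixed point of a map produced by linearising \eqref{strongformeqn}, and to apply Banach's theorem in a ball of
\begin{equation*}
 X_T:=\Bigl(H^1(0,T;\H)\cap L^2\bigl(0,T;H^2(\O;\R^d)\bigr)\Bigr)\times\Bigl(H^1\bigl(0,T;H^1_0(\O;\mathbb S_0)\bigr)\cap L^2\bigl(0,T;H^3(\O;\mathbb S_0)\bigr)\Bigr).
\end{equation*}
Throughout one uses the embeddings $H^1(0,T;\H)\cap L^2(0,T;H^2)\hookrightarrow C([0,T];\V)$ and $H^1(0,T;H^1_0)\cap L^2(0,T;H^3)\hookrightarrow C([0,T];H^2)$ with constants independent of $T\le1$; since $d\le3$ the latter gives in particular $Q\in C([0,T];C(\overline\O))$, so that $\nu(Q)$ is continuous on $\overline\O\times[0,T]$ and, by \eqref{viscosity}, valued in $[c_0,c_1]$. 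The right-hand sides of the $Q$-equation send $\mathbb S_0$-valued fields to $\mathbb S_0$-valued fields, so the constraint $Q(t)\in\mathbb S_0$ is preserved and one may work in $\mathbb S_0$-valued spaces throughout. Fix a radius $R$ depending only on $\|u_0\|_\V$, $\|Q_0\|_{H^2}$ and the structural constants, and let $\mathcal K_{R,T}$ be the set of $(\bar u,\bar Q)\in X_T$ obeying \eqref{strongformeqnbc}--\eqref{strongformeqnbc'} with $\|(\bar u,\bar Q)\|_{X_T}\le R$; for $R$ large enough this contains a fixed extension of $(u_0,Q_0)$, and it is complete for the weaker norm of $Y_T:=\bigl(L^\infty(0,T;\V)\cap L^2(0,T;H^2)\bigr)\times\bigl(L^\infty(0,T;H^2)\cap L^2(0,T;H^3)\bigr)$.

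\textbf{The linearised system and the cancellation.} For $(\bar u,\bar Q)\in\mathcal K_{R,T}$ set $\Phi(\bar u,\bar Q)=(u,Q)$ equal to the solution of the linear system
\begin{align*}
 \dt u-\div\bigl(\nu(\bar Q)\Lsym u\bigr)+\grad p&=\div\sigma(\bar Q,Q)+\div\tau(\bar Q)-(\bar u\cdot\grad)\bar u,\qquad\div u=0,\\
 \dt Q-\Gamma\lambda\Delta Q-S(\grad u,\bar Q)&=\Gamma L(\bar Q)-(\bar u\cdot\grad)\bar Q,
\end{align*}
with the data of \eqref{strongformeqnbc}--\eqref{strongformeqnbc'}, where $\sigma(\bar Q,Q)=\bar Q\Delta Q-(\Delta Q)\bar Q$ and $S(\grad u,\bar Q)=\Lskew u\,\bar Q-\bar Q\,\Lskew u$; at a fixed point all bars disappear and \eqref{strongformeqn} is recovered. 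The crucial point is that the $\sigma$- and $S$-terms must be kept \emph{inside} the linear operator and not moved to the right-hand side: estimated as sources they would cost a factor $R$ and no factor of $T$ (the worst piece of $\div\sigma(\bar Q,Q)$ is $\bar Q\,\grad\Delta Q$, whose top-order factor $\grad\Delta Q$ lies only in $L^2(0,T;L^2)$ and so admits no interpolation $T$-gain, and similarly for $S$), whereas they cancel in the energy estimate by virtue of the algebraic identity — valid for $u$ solenoidal with $u|_{\p\O}=0$, $\bar Q,\Delta Q$ symmetric —
\begin{equation*}
 \bigl\langle\div\sigma(\bar Q,Q),u\bigr\rangle_{L^2(\O)}=\bigl\langle\Delta Q,\,S(\grad u,\bar Q)\bigr\rangle_{L^2(\O)}.
\end{equation*}
Testing the momentum equation by $u$ and the $Q$-equation by $-\Delta Q$ and adding, the two occurrences of this bracket cancel, leaving the dissipation $\int_\O\nu(\bar Q)|\Lsym u|^2+\Gamma\lambda\|\Delta Q\|_{L^2}^2$ and lower-order terms; the same mechanism, with extra commutator terms of lower order, is used at the higher levels. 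The coupled linear system is solved by a Galerkin scheme (the cancellation furnishing the uniform a priori bound) and then lifted into $X_T$: the heat semigroup with homogeneous Dirichlet data has maximal $L^2$-regularity, so $Q\in H^1(0,T;H^1_0)\cap L^2(0,T;H^3)$ once the data, \emph{including} $S(\grad u,\bar Q)$, lie in $L^2(0,T;H^1)$; and the generalised Stokes operator $u\mapsto-P_\sigma\div\bigl(\nu(Q_0)\Lsym u\bigr)$, being self-adjoint, positive definite (Korn and $\nu\ge c_0$) with domain $H^2\cap\V$ (elliptic regularity for continuous coefficients, $\p\O\in C^4$), has maximal $L^2$-regularity by de~Simon's theorem with $T$-uniform constants; the time-dependent part $\div\bigl((\nu(\bar Q)-\nu(Q_0))\Lsym u\bigr)$ of the true operator is a perturbation because $\|\nu(\bar Q)-\nu(Q_0)\|_{L^\infty(\O_T)}\le C(R)\,T^{\alpha}$ for some $\alpha>0$ — obtained by interpolating $\bar Q-Q_0$ between $L^\infty(0,T;H^2)$ and the bound $\|\bar Q(t)-Q_0\|_{H^1}\le\sqrt t\,\|\dt{\bar Q}\|_{L^2(0,T;H^1_0)}$, which holds since the two agree at $t=0$ — and the first-order part $\Lsym u\cdot\grad\nu(\bar Q)$ is absorbed similarly via $\Lsym u\in L^\infty(0,T;L^2)\cap L^2(0,T;H^1)\hookrightarrow L^4(0,T;L^3)$.

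\textbf{Self-mapping and contraction.} For $T$ small, depending on $R$ and the data, one checks $\Phi(\mathcal K_{R,T})\subset\mathcal K_{R,T}$ and that $\Phi$ is a contraction on $(\mathcal K_{R,T},\,\|\cdot\|_{Y_T})$. The engine is that every genuinely nonlinear source — the two transport terms, $\tau(\bar Q)$, $L(\bar Q)$, the viscosity defect — carries a positive power of $T$ once one interpolates its top-order factor between an $L^\infty(0,T;\cdot)$ and an $L^2(0,T;\cdot)$ bound (e.g.\ $\grad^2\bar Q\in L^\infty(0,T;L^2)\cap L^2(0,T;H^1)\hookrightarrow L^4(0,T;L^3)$, or $\bar u\in C([0,T];\V)\cap L^2(0,T;H^2)\hookrightarrow L^{8/3}(0,T;L^\infty)$) and then applies Hölder in time; combining this with the maximal-regularity estimate of the linear system one gets $\|\Phi(\bar u,\bar Q)\|_{X_T}\le C_0\bigl(\|u_0\|_\V+\|Q_0\|_{H^2}\bigr)+C(R)\,\delta(T)$ with $\delta(T)\to0$, so that the choice $R:=2C_0(\|u_0\|_\V+\|Q_0\|_{H^2})$ followed by $T$ small yields the self-mapping. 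For the contraction, two competitors in $\mathcal K_{R,T}$ share the initial data, so their difference vanishes at $t=0$; its energy estimate is closed in the same way, the coupling of the difference system being annihilated by the identity above and the residual difference-sources (such as $\div\sigma(\bar Q_1-\bar Q_2,Q_2)$) being controlled, after integration by parts, by $\|\bar Q_1-\bar Q_2\|_{L^\infty(0,T;L^\infty)}\|\Delta Q_2\|_{L^2(0,T;L^2)}\lesssim C(R)\sqrt T\,\|\bar Q_1-\bar Q_2\|_{Y_T}$, so that $\|\Phi(\bar u_1,\bar Q_1)-\Phi(\bar u_2,\bar Q_2)\|_{Y_T}\le C(R)\,T^{\beta}\,\|(\bar u_1,\bar Q_1)-(\bar u_2,\bar Q_2)\|_{Y_T}$ for some $\beta>0$. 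Shrinking $T$ further so that $C(R)T^{\beta}<1$, Banach's theorem produces a unique fixed point $(u,Q)\in\mathcal K_{R,T}$, the desired strong solution; uniqueness within the whole solution class follows because any solution with the stated regularity lies, on a short enough subinterval, in some ball $\mathcal K_{R',T'}$ and hence equals the fixed point there, and a continuation argument extends the identity to $[0,T]$.

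\textbf{Main obstacle.} The technical core is twofold: propagating the $\sigma/S$ cancellation from the energy level up to the maximal-regularity level — so that the coupling between the momentum equation (second order in $u$) and the $Q$-equation (third order in $Q$) contributes only lower-order, $T$-small corrections, which in particular forces $\|u\|_{L^2(0,T;H^2)}$ and $\|Q\|_{L^2(0,T;H^3)}$ of the linear solution to tend to $0$ as $T\to0$ — and establishing maximal $L^2$-regularity with $T$-uniform constants for the Stokes operator with the merely continuous, $Q$-dependent coefficient $\nu(\bar Q)$. The remaining work, the long catalogue of interpolation-with-$T$-gain estimates for the many terms generated by the $Q$-dependence of the viscosity in \eqref{viscosity}, is routine but lengthy.
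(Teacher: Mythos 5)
Your overall architecture coincides with the paper's: linearise while keeping the critical coupling terms $\diverg\sigma(\cdot,Q)$ and $S(\nabla u,\cdot)$ inside the linear operator, exploit the algebraic cancellation $\int_\O \diverg\sigma(\bar Q,Q)\cdot u\,\dx=\int_\O S(\nabla u,\bar Q):\Delta Q\,\dx$, and close a Banach fixed-point argument by extracting positive powers of $T$ from every genuinely nonlinear source via interpolation in time. The deviations (linearising about the trajectory $\bar Q$ rather than the fixed datum $Q_0$, contracting in a weaker metric on a ball of $X_T$, keeping the pressure instead of applying $P_\sigma$) are legitimate variants and do not affect the substance.

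There is, however, a genuine gap at exactly the point the paper identifies as its most subtle one: how the coupled \emph{linear} system is actually solved with the regularity $u\in L^2(0,T;H^2)$, $Q\in L^2(0,T;H^3)$. Your proposed ``lifting'' is circular as stated: maximal regularity for the (perturbed) Stokes part needs $\diverg\sigma(\bar Q,Q)\in L^2(0,T;L^2)$, hence already $Q\in L^2(0,T;H^3)$, while maximal regularity for the heat part needs $S(\nabla u,\bar Q)\in L^2(0,T;H^1)$, hence already $u\in L^2(0,T;H^2)$; neither of these sources carries a factor of $T$ or a small constant, so the two estimates cannot be combined by absorption. This is precisely why the cancellation must be propagated to the $H^2\times H^3$ level by a coupled higher-order energy estimate (testing with $Au$ and with $\Delta\barg{\nu(\bar Q)\Delta Q}$). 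That estimate in turn requires (i) the compatibility condition $\Delta Q|_{\p\O}=0$ --- which every strong solution satisfies automatically, since restricting the $Q$-equation to $\p\O$ and using $u|_{\p\O}=Q|_{\p\O}=0$ kills every term except $\lambda\Delta Q$, and which must therefore be built into the solution space --- so that the boundary terms in the repeated integrations by parts vanish; and (ii) more regularity than a Galerkin approximant or the putative solution possesses (effectively $Q\in L^2(0,T;H^4)$ and third derivatives of $u$) to justify those integrations by parts; note also that $\Delta\barg{\nu(\bar Q)\Delta Q_m}$ is not an admissible test function for any standard Galerkin basis. The paper fills this gap with an entire section: it regularises the linearised $Q$-equation by $\varepsilon\Delta^2Q$ with the extra boundary condition $\Delta Q|_{\p\O}=0$, solves the regularised coupled linear system by an inner fixed-point (decoupling $u$ and $Q$ through the auxiliary unknown $f=-\diverg\sigma(Q_0,Q)$), derives the higher-order estimate uniformly in $\varepsilon$, and passes to the limit $\varepsilon\searrow0$. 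Your proposal needs some such device; without it the key a priori bound for the linearised system --- and with it the boundedness of the solution operator that your fixed-point argument presupposes --- is not established.
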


The most subtle point in our analysis is related to the fact, that the
evolution equation for the director field $Q$ implies in view of the
regularity of the strong solutions the compatibility condition
$\Delta Q(t, \cdot)=0$ on $(0,T)\times \partial\Omega$.
This observation leads to the following outline for the proof of
Theorem~\ref{maintheorem}, see Section~\ref{prelim} for the notation
used throughout the paper. We apply (formally) the Helmholtz projector to the first equation
of~\eqref{strongformeqn}, and obtain 
\begin{equation*}
 \dt{u}-  P_{\sigma}\div(\nu(Q) \Lsym{u} )
=P_{\sigma}\diverg \barg{\tau(Q)+\sigma(Q,Q)}-P_{\sigma}(u\cdot\nabla u)\,,
\end{equation*}
an equation in which the pressure has been eliminated
from the system. The existence of a local solution
for the corresponding system is now obtained by applying Banach's fixed-point
theorem to the nonlinear operator $\mathscr{L}=\mathcal{L}^{-1}\mathcal{N}$
which is constructed from a linearization
of the system about the initial value $\tildeQ$, i.e.,
the system is rewritten in the form
\begin{align*}
 \mathcal{L}(\tildeQ)(u,Q) = \mathcal{N}(\tildeQ)(u,Q)
\end{align*}
with $\mathcal{L}$ defined in~\bref{linearization} and  $\mathcal{N}$
in~\bref{nonlinear} below. We prove in Section~\ref{linearsystem} that $\mathcal{L}$
is bounded, onto and one-to-one between suitable spaces and in Section~\ref{nonlinearoperator}
that $\mathcal{N}$ is Lipschitz continuous with arbitrarily small Lipschitz constant for
$T$ sufficiently small.
In order to deal with the compatibility
condition $\Delta Q=0$ on $(0,T)\times \partial\Omega$, which we need to impose also
on solutions of the linearized equation, we add a singular perturbation
to the operator in the third equation in~\bref{strongformeqn},
i.e., we consider an approximation of the heat operator $\dt{Q}-\Delta Q$
in the linearization of the evolution equation for the director field by the operator
\begin{align*}
 \dt{Q}+\varepsilon\Delta^2 Q-\Delta Q\,,\quad \epsilon>0\,,
\end{align*}
see~\bref{approstrong} for the complete set of equations. The proof of global
existence of solutions to this approximating system is quite classical in the
sense that we employ a fixed-point argument to obtain a local solution and
global a~priori estimates which are closely related to the energy law which
holds for the system~\bref{strongformeqn}. This analysis is presented
in Section~\ref{singularperturbation}. Uniform bounds in $\epsilon>0$ allow us
to construct global solutions to the linear system in Section~\ref{linearsystem}
by passing to the limit $\epsilon\searrow 0$.
With all these results in place, the
proof of Theorem~\ref{maintheorem} is given in Section~\ref{maintheoremproof}.
Finally, in Section~\ref{sec:Global} we discuss estimates of the solutions, which are the basis to prove existence of strong solutions globally in time for small initial velocities and $Q_0$ close to the global minimizer of $\mathcal{F}$ if $d=3$ and any sufficiently regular initial data if $d=2$, cf. Lin and Liu~\cite{LinLiu1995} or Wu et al~\cite[Lemma~3.3 and Corollary~4.2]{wu}.





\section{Preliminaries}\label{prelim}
In this section we collect the relevant definitions and some auxiliary results
which will be used throughout the paper.

\subsection{Notation}
For two vectors $a$, $b\in\R^d$ we set $a\cdot b = \sum_{i=1}^d a_i b_i$
and for two matrices $A$, $B\in\R^{d\times d}$ we set $A: B
= \sum_{i,j=1}^d A_{ij} B_{ij}= \tr (A^T B)$. Then
\begin{align}\label{linalg}
 (AB):C=B:(A^TC)=A:(CB^T)\quad
\text{ for all }A,\, B,\, C\in \R^{d\times d}
\end{align}
and we omit the parentheses for simplicity in the following
if it is clear from the context that the equation is scalar.
 We shall use
$(\cdot,\cdot)_{H}$ to denote the inner product in a Hilbert space $H$ and use $\langle\cdot,\cdot\rangle_{X',X}$ to denote the dual product between $X$ and its dual space $X'$.
The symbol $C$ denotes a generic constant whose value may change from line to line. We do not indicate the dependence of constants on the domain $\Omega$.
In order to specify the dependence of estimates on key inequalities, some
constants are specifically labeled, e.g., $C_{\mathcal{S}}$ denotes
the constant in the  $H^2$-estimate for the Stokes operator in~\eqref{stokregular}
below and $C_{\mathcal{E}}$ stands for the constant in the
$H^2$-estimate for the Laplace operator,
  \begin{equation}\label{ellipticest}
    \|\Delta f\|_{L^2(\O)}
\geq C_{\mathcal{E}}\|f\|_{H^2(\O)}, \quad f\in H^1_0(\O)\cap H^2(\Omega)\,.
  \end{equation}
Let $\mathbb{S}_0\subset \mathbb{R}^{d\times d}$ denote the space of $Q$-tensors,
i.e.,
\begin{equation*}
  \mathbb{S}_0=\{Q\in\mathbb{R}^{d\times d},\,Q=Q^T,\,\tr  Q=0\}\,.
\end{equation*}
The norm of a matrix $F\in \mathbb{R}^{d\times d}$ is given
by $|F|^2=\tr(F^T F)=F:F$ and hence it follows for all
$Q\in \mathbb{S}_0$ that $|Q|^2=\tr(Q^2)$.
The symbol $\I$ denotes the identity matrix in dimension $d$.

Throughout the paper we adopt Einstein's summation convention.
Moreover, we define the contraction $\nabla Q\odot \nabla Q$ for a second order
tensor field $Q=(Q_{ij})_{i,j=1}^d$ by
\begin{align*}
\barg{\nabla Q\odot \nabla Q}_{ij}
= \sum_{k,\ell=1}^d \partial_i Q_{k\ell}\partial_j Q_{k\ell}
=\partial_i Q_{k\ell}\partial_j Q_{k\ell} \,.
\end{align*}
Furthermore $a\otimes b:= ab^T$ for $a,b\in\R^d$. For matrix-functions $F$, we denote $(\div F)_i:=\partial_{j}F_{ij}$.

Finally, we note that we will offen omit ``$\mathrm{d} x$, $\mathrm{d} t$,...'' at the end of the integrals in order to obtain shorter formulas.

\subsection{Function spaces}
We use standard notation for Lebesgue and Sobolev spaces
of scalar and vector valued functions and we omit the
domain and the range in the notation if it is clear from the context.
The subscript $\sigma$ indicates solenoidal vector fields, e.g.,
\begin{align*}
 C^{\infty}_{0,\sigma}(\O)
=\bset{u\in C^{\infty}_{0}(\O;\R^d),\,\diverg u = 0}
\end{align*}
and
\begin{align*}
 L^2_\sigma(\Omega) 
&=\bset{u\in L^{2}(\O;\R^d),\,\diverg u = 0,\,\gamma(u)=0}=\overline{C_{0,\sigma}^\infty(\Omega)}^{L^2(\Omega;\R^d)}\,,\\
 H^1_{0,\sigma}(\Omega) 
&=\bset{u\in H^{1}_0(\O;\R^d),\,\diverg u = 0}\,,\quad H^{-1}_\sigma(\Omega)= (H^1_{0,\sigma}(\Omega))'
\end{align*}
where $\gamma(u)=u\cdot n\in H^{-\frac12}(\partial\Omega)$ is understood in the sense of a (weak) trace and where $n$
is the exterior normal to $\partial\Omega$.
Note that
\begin{align*}
 L^2(\Omega;\R^d) = \H \oplus \H^\perp \,\text{ with }\,
\H^\perp = \bset{ u \in L^2(\Omega;\R^d),\,u=\nabla q
\text{ for some }q\in H^1(\Omega) }\,.
\end{align*}
The orthogonal projection $L^2(\Omega;\R^d) \to \H$
is denoted by $P_\sigma$ and often referred to as the   Helmholtz projection and the
decomposition
\begin{equation}
  \label{eq:Helmholtz}
u=P_\sigma u + \nabla q\qquad \text{with }q\in H^1(\Omega)
\end{equation}
 for $u\in L^2(\Omega;\R^d)$ as Helmholtz decomposition.
See~\cite{Temam1984} for more information on these spaces.

In order to write~\eqref{strongformeqn} in an abstract way as an
operator equation
between two Banach spaces, we  define the domain $X_T  = X_{T;u}\times X_{T;Q}$ and
the range $Y_T = Y_{T;u}\times Y_{T;Q}$ by
\begin{equation*}
\begin{split}
    X_{T;u}&=
L^2(0,T;H^2(\O;\R^d)\cap H^1_{0,\sigma}(\O))
\cap H^1(0,T;\H),
\\
X_{T;Q}&=\left\{Q\in L^2(0,T;H^3(\O;\mathbb{S}_0))
\cap H^1(0,T;H^1_0(\O;\mathbb{S}_0)),\Delta Q|_{\partial\Omega}=0 \right\},\\
  Y_T&=L^2(0,T;\H)\times L^2(0,T;H^1_0(\O;\mathbb{S}_0))
\end{split}
\end{equation*}
together with the norms defined by 
\begin{equation*}
  \begin{split}
     \|(u,Q)\|_{X_T}^2 &=
\int_0^T \barg{ \|u(t)\|_{H^2(\Omega)}^2 + \|\dt{u}(t)\|_{L^2(\Omega)}^2 }\dv{t}
+
\int_0^T \barg{ \|Q(t)\|_{H^3(\Omega)}^2 + \|\dt{Q}(t)\|_{H^1(\Omega)}^2 }\dv{t}\\
&\qquad +\|u|_{t=0}\|_{H^1(\O)}^2+\|Q|_{t=0}\|_{H^2(\O)}^2
  \end{split}
\end{equation*}
and
\begin{align*}
  \|(f,G)\|_{Y_T}^2 =
\int_0^T \barg{ \|f(t)\|_{L^2_\sigma(\Omega)}^2 + \|G(t)\|_{H^1(\Omega;\mathbb{S}_0)}^2 }\dv{t}\,.
\end{align*}
To formulate the initial conditions we set
\begin{align*}
   Y_0&= H^1_{0,\sigma}(\O)\times \barg{H^2(\O;\mathbb{S}_0)\cap H^1_0(\O;\mathbb{S}_0)}\,.
\end{align*}
For any Banach space $Z$, $z\in Z$ and $R>0$ we
denote by ${B}_Z(z,R)$ the closed 	
ball of radius $R$ about $z$ with respect to the norm of $Z$,
\begin{equation*}
 {B}_Z(z,R) =\{z\in Z,\,\|z\|_Z\leq R\}\,.
\end{equation*}

\subsection{Inequalities and a~priori estimates}

The following estimates will be frequently used.

\begin{lemma}
There are constants $C>0$ such that:
\begin{itemize}
 \item [(i)] If $d=2$, then for all $u\in H^1_0(\O)$,
  \begin{equation*}
    \|u\|_{L^4(\O)}
\leq C\|u\|^{\frac 12}_{L^2(\O)}\|\nabla u\|^{\frac 12}_{L^2(\O)}\,.
  \end{equation*}

 \item [(ii)] If $d=3$, then for all $u\in H^1_0(\O)$,
  \begin{equation}\label{inter2}
     \|u\|_{L^4(\O)}
\leq C\|u\|^{\frac 14}_{L^2(\O)}\|\nabla u\|^{\frac 34}_{L^2(\O)}\,,
  \end{equation}
  and
    \begin{equation}\label{inter3}
     \|u\|_{L^3(\O)}
\leq C\|u\|^{\frac 12}_{L^2(\O)}\|\nabla u\|^{\frac 12}_{L^2(\O)}\,.
  \end{equation}
If $u\in H^1_0(\O)\cap H^2(\Omega)$, then
  \begin{equation}\label{inter4}
    \|u\|_{L^{\infty}(\O)}
\leq C\|u\|_{H^1(\O)}^{\frac 12}\|u\|_{H^2(\O)}^{\frac 12}\,.
  \end{equation}
Finally, if $f\in H^1(\O)$ and $g\in H^2(\O)$, then
    \begin{equation}\label{important}
    \|fg\|_{H^1(\O)}
\leq C\|f\|_{H^1(\O)}\|g\|^{\frac 12}_{H^1(\O)}\|g\|_{H^2(\O)}^{\frac 12}\,.
  \end{equation}
\end{itemize}
\end{lemma}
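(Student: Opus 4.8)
The plan is to recognize items (i), \eqref{inter2}, \eqref{inter3}, \eqref{inter4} as instances of the Gagliardo--Nirenberg interpolation inequality, and \eqref{important} as a consequence of the first four together with the Leibniz rule and H\"older's inequality. The uniform strategy for the interpolation inequalities is to reduce each one to its counterpart on $\R^d$: for $u\in H^1_0(\O)$ one uses the extension by zero $\tilde u\in H^1(\R^d)$ with $\|\tilde u\|_{H^1(\R^d)}=\|u\|_{H^1(\O)}$, while for $u\in H^2(\O)$ one uses a bounded linear extension operator $E\colon H^k(\O)\to H^k(\R^d)$, $0\le k\le 4$, which is available since $\partial\O\in C^4$. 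The only two embeddings then needed are the Sobolev embeddings $H^1_0(\O)\hookrightarrow L^p(\O)$ with $\|u\|_{L^p(\O)}\le C\|\nabla u\|_{L^2(\O)}$ for arbitrary $p<\infty$ if $d=2$ and for $p=6$ if $d=3$; the asserted powers of $\|u\|_{L^2}$ and $\|\nabla u\|_{L^2}$ are then fixed by interpolating between $L^2$ and $L^4$ resp.\ $L^6$ via H\"older's inequality (using Poincar\'e's inequality on $\O$ where the full $H^1$-norm appears).

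Carrying this out: item (i) in $d=2$ is the Gagliardo--Nirenberg inequality $\|u\|_{L^4(\R^2)}\le C\|u\|_{L^2(\R^2)}^{1/2}\|\nabla u\|_{L^2(\R^2)}^{1/2}$ applied to the zero extension of $u$; a self-contained proof is Ladyzhenskaya's slicing argument, which for $u\in C^\infty_0(\R^2)$ gives $u(x)^2\le 2\int_{\R}|u|\,|\partial_i u|$ for $i=1,2$, whence multiplying the two bounds, integrating and applying Cauchy--Schwarz yields $\|u\|_{L^4}^4\le 4\|u\|_{L^2}^2\|\nabla u\|_{L^2}^2$, and density of $C^\infty_0(\O)$ in $H^1_0(\O)$ concludes. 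In $d=3$, \eqref{inter2} and \eqref{inter3} come from the H\"older interpolations $\|u\|_{L^4}\le\|u\|_{L^2}^{1/4}\|u\|_{L^6}^{3/4}$ and $\|u\|_{L^3}\le\|u\|_{L^2}^{1/2}\|u\|_{L^6}^{1/2}$ (the exponent identities $\tfrac14=\tfrac14\cdot\tfrac12+\tfrac34\cdot\tfrac16$ and $\tfrac13=\tfrac12\cdot\tfrac12+\tfrac12\cdot\tfrac16$ hold) followed by $\|u\|_{L^6(\O)}\le C\|\nabla u\|_{L^2(\O)}$. For \eqref{inter4} (Agmon's inequality in $d=3$), set $\tilde u=Eu\in H^2(\R^3)$ and use the pointwise bound $|\tilde u(x)|\le c\int_{\R^3}|\widehat{\tilde u}(\xi)|$; splitting the frequency integral at $|\xi|=R$ and applying Cauchy--Schwarz gives $\int_{|\xi|\le R}|\widehat{\tilde u}|\le CR^{1/2}\|\tilde u\|_{H^1}$ and $\int_{|\xi|>R}|\widehat{\tilde u}|\le CR^{-1/2}\|\tilde u\|_{H^2}$, so that optimizing over $R>0$ yields $\|u\|_{L^\infty(\O)}\le\|\tilde u\|_{L^\infty(\R^3)}\le C\|\tilde u\|_{H^1}^{1/2}\|\tilde u\|_{H^2}^{1/2}\le C\|u\|_{H^1(\O)}^{1/2}\|u\|_{H^2(\O)}^{1/2}$.

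Finally, for \eqref{important} in $d=3$ I would expand $\|fg\|_{H^1}^2=\|fg\|_{L^2}^2+\|\nabla(fg)\|_{L^2}^2$ and bound each piece by the target quantity $C\|f\|_{H^1}\|g\|_{H^1}^{1/2}\|g\|_{H^2}^{1/2}$. The contributions $fg$ and $(\nabla f)\,g$ are estimated by $\|g\|_{L^\infty}$ times $\|f\|_{L^2}$ resp.\ $\|\nabla f\|_{L^2}$, together with $\|g\|_{L^\infty}\le C\|g\|_{H^1}^{1/2}\|g\|_{H^2}^{1/2}$ from \eqref{inter4}; the contribution $f\,\nabla g$ is estimated by $\|f\|_{L^6}\|\nabla g\|_{L^3}\le C\|f\|_{H^1}\,\|\nabla g\|_{L^2}^{1/2}\|\nabla g\|_{H^1}^{1/2}\le C\|f\|_{H^1}\|g\|_{H^1}^{1/2}\|g\|_{H^2}^{1/2}$, using $H^1(\O)\hookrightarrow L^6(\O)$ and the inequality \eqref{inter3} applied to $v=\nabla g$ with the full $H^1$-norm on the right. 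This last form of \eqref{inter3} is valid for every $v\in H^1(\O)$, since the $H^1_0$-hypothesis in \eqref{inter3} is used only to trade $\|v\|_{H^1}$ for $\|\nabla v\|_{L^2}$ via Poincar\'e, whereas here $\nabla g$ need not vanish on $\partial\O$. Summing the three contributions gives the claim. All of this is classical; the only points requiring any care are matching the precise interpolation exponents in \eqref{inter2}--\eqref{inter3}, passing from $\R^d$ to the bounded domain $\O$ in \eqref{inter4} by means of the $C^4$-extension, and invoking the homogeneous ($H^1$) rather than the $H^1_0$-form of \eqref{inter3} inside the proof of \eqref{important}.
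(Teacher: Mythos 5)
Your proposal is correct, and for the only substantive item, \eqref{important}, it follows exactly the paper's route: split $\nabla(fg)=f\,\nabla g+(\nabla f)\,g$, bound $\|f\,\nabla g\|_{L^2}\le \|f\|_{L^6}\|\nabla g\|_{L^3}$ and $\|(\nabla f)\,g\|_{L^2}\le\|\nabla f\|_{L^2}\|g\|_{L^\infty}$, then invoke the $L^3$-interpolation for $\nabla g$ and Agmon's inequality for $g$. You correctly flag the one genuinely delicate point, namely that for $v=\nabla g$ one must use the interpolation inequality with the full $H^1$-norm on the right rather than the $H^1_0$-version \eqref{inter3}; the paper handles this via the Gagliardo--Nirenberg form \eqref{inter33} stated in the remark after the lemma, which is exactly the inequality you re-derive. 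The only differences are cosmetic: where the paper cites Temam for (i) and \eqref{inter2} and Constantin--Foias for \eqref{inter4}, you supply self-contained proofs (Ladyzhenskaya's slicing argument, H\"older interpolation against the $H^1_0\hookrightarrow L^6$ embedding, and the Fourier-splitting proof of Agmon's inequality combined with a Stein-type extension, for which the $C^4$-boundary is more than enough); and you obtain \eqref{inter3} by interpolating $L^2$--$L^6$ directly, whereas the paper interpolates $L^2$--$L^4$ using \eqref{inter2}, with the same result. You also treat the $\|fg\|_{L^2}$ contribution explicitly, which the paper leaves implicit. No gaps.
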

\begin{proof}
  Inequalities (i) and~\eqref{inter2} can be found in
\cite[pp. 291 and 296]{Temam1984} and
\eqref{inter3} can be obtained from~\eqref{inter2} with
H\"{o}lder's inequality in its interpolation form.
For~\bref{inter4} see, e.g.,~\cite[Lemma 4.10]{ConstantinFoias1988}. To
prove~\bref{important} it suffices to estimate the gradient term
by~\bref{inter3} and~\bref{inter4}
\begin{align*}
 \| \nabla(fg) \|_{L^2(\O)}&\,
\leq  \| f\,\nabla g \|_{L^2(\O)} +  \| (\nabla f)\,g \|_{L^2(\O)}
\leq \| f\|_{L^6(\O)} \| \nabla g \|_{L^3(\O)} +
\| \nabla f \|_{L^2(\O)} \| g\|_{L^\infty(\O)} \\
&\,\leq C
\| f\|_{H^1(\O)} \| \nabla g \|_{L^2(\O)}^{1/2} \| \nabla g \|_{H^1(\O)}^{1/2}+
C \| \nabla f \|_{L^2(\O)}
\| g \|_{H^1(\O)}^{1/2} \|  g \|_{H^2(\O)}^{1/2}\,.
\end{align*}
The proof is now complete.
\end{proof}
\begin{remark}
  By the classical Gagliardo-Nirenberg inequality, we have in two and three dimensions
  \begin{equation}\label{inter33}
     \|u\|_{L^4(\O)}
\leq C\|u\|^{\frac 14}_{L^2(\O)}\|u\|^{\frac 34}_{H^1(\O)},
  \quad
     \|u\|_{L^3(\O)}
\leq C\|u\|^{\frac 12}_{L^2(\O)}\| u\|^{\frac 12}_{H^1(\O)}\,
  \end{equation}
respectively, 
for all $u\in H^1(\Omega)$.
\end{remark}
In the following we will frequently use that
  \begin{equation}\label{interpo2}
  \|f\|^2_{C([0,T];H)}\leq 2(\|f\|_{H^1(0,T;H_1')}\|f\|_{L^2(0,T;H_1)}+\|f(0,\cdot)\|^2_{H})
  \end{equation}
for all $f\in H^1(0,T;H_1')\cap L^2(0,T;H_1)$, where $H_1\hookrightarrow H\cong H' \hookrightarrow H_1'$ is a Gelfand-triple.

The construction of solutions which satisfy the compatibility condition
$\Delta Q=0$ on $\partial\Omega$ requires the approximation of the equation
for $Q$ by a fourth-order equation. The existence of solutions can be inferred from
results in semi-group theory.

\begin{prop}\label{abstrac}
  Let $\mathcal{A}:\mathcal{D}(\mathcal{A})\subset H\to H$ be a generator
of a bounded analytic semi-group on a Hilbert space $H$ and let $1<q<\infty$.
Then for every $f\in L^q(0,\infty;H)$ and
$u_0\in \barg{H,\mathcal{D} ( \mathcal{A} )}_{1-\frac1q, q}$
there is a unique $u:[0,\infty)\to H$ such that
$\frac{\displaystyle du}{\displaystyle dt},\mathcal{A}u\in L^q(0,\infty;H)$ solving
\begin{equation*}
  \begin{split}
      \frac{du}{dt}(t)-\mathcal{A}u(t)&=f(t),~t>0,\\
  u(0)&=u_0.
  \end{split}
\end{equation*}
Moreover, there is a constant $C_q>0$ independent of $f$ and $u_0$ such that
\begin{equation*}
\Bnorm{\frac{du}{dt}}_{L^q(0,\infty;H)}
+\|\mathcal{A}u\|_{L^q(0,\infty;H)}
\leq
 C_q\barg{\|f\|_{L^q(0,\infty;H)} +
\|u_0\|_{(H,\mathcal{D}(\mathcal{A}))_{1-\frac 1q,q}}
}\,.
\end{equation*}
\end{prop}
\begin{proof}
In the case $u_0=0$ the statement is the main result of \cite{DeSimonMaxReg}. The general case can be easily reduced to the case $u_0=0$ by subtracting a suitable extension. The existence of such an extension follows e.g. from \cite[Chapter~III, Theorem 4.10.2]{Amann}.
\end{proof}

As an application, we obtain an existence result for the fourth order equation
in the subsequent lemma.

\begin{lemma}[Strong solutions for a 4th-order parabolic system]\label{para}
~\\
 Let $\varepsilon>0$, $T\in (0,T_0)$ be fixed constants.
For any $G\in L^2(\ts{T};\mathbb{S}_0)$ and $Q_0\in H^2(\O;\mathbb{S}_0)\cap H^1_{0}(\O;\mathbb{S}_0)$,
the system
\begin{equation}
\begin{alignedat}{2}
\dt{Q}+  \varepsilon \Delta ^2Q -\Delta Q &\, =G
\quad&&\text{ in }(0,T)\times \Omega\,,\\
Q|_{\partial\Omega}=\Delta Q|_{\partial\Omega}&\,=0&&\text{ on }(0,T)\times \partial\Omega\,,\\
Q|_{t=0}&\,=Q_0&&\text{ in }\Omega\,
\end{alignedat}
\end{equation}
has a unique strong solution
$Q\in L^2(0,T;H^4(\O;\mathbb{S}_0)\cap H^1_{0}(\O;\mathbb{S}_0))\cap H^1(0,T;L^2(\O;\mathbb{S}_0))$.
Moreover, the following estimate holds
\begin{equation*}
  \|Q\|^2_{L^2(0,T;H^4)}+\|\dt{Q}\|^2_{L^2(\ts{T})}
\leq C(\varepsilon)(\|Q_0\|^2_{H^2(\O)}+\|G\|^2_{L^2(\ts{T})})\,.
\end{equation*}
\end{lemma}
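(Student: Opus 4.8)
The strategy is to recast the fourth–order parabolic system in the abstract form of Proposition~\ref{abstrac} and then invoke maximal $L^2$-regularity. First I would choose the Hilbert space $H=L^2(\O;\mathbb{S}_0)$ and introduce the operator $\mathcal{A}$ associated with the bilaplacian–laplacian combination, namely $\mathcal{A}Q=-\varepsilon\Delta^2 Q+\Delta Q$ with domain
\begin{equation*}
 \mathcal{D}(\mathcal{A})=\{Q\in H^4(\O;\mathbb{S}_0):\,Q|_{\partial\Omega}=\Delta Q|_{\partial\Omega}=0\}\,.
\end{equation*}
The point of imposing the two boundary conditions $Q=0$ and $\Delta Q=0$ is precisely that they decouple the fourth–order problem into two second–order Dirichlet problems: writing $w=-\Delta Q$, the operator $-\Delta$ with domain $H^2\cap H^1_0$ is self-adjoint, positive and has compact resolvent on $H$, and $\mathcal{A}=-\varepsilon(-\Delta)^2-( -\Delta)\cdot(-1)$... more cleanly, $-\mathcal{A}=\varepsilon B^2+B$ where $B=-\Delta$ with Dirichlet domain. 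Since $B$ is a nonnegative self-adjoint operator, $p(B):=\varepsilon B^2+B$ is again nonnegative self-adjoint (by the spectral theorem, as $p(\lambda)=\varepsilon\lambda^2+\lambda\geq0$ for $\lambda\geq0$), hence $-\mathcal{A}$ is nonnegative self-adjoint on $H$ and therefore $\mathcal{A}$ generates a bounded analytic semigroup. I would also record that $\mathcal{D}(\mathcal{A})$, as the domain of $\varepsilon B^2+B$, equals $\mathcal{D}(B^2)=H^4(\O;\mathbb{S}_0)$ intersected with the two boundary conditions, using elliptic regularity for the Dirichlet Laplacian twice.

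Next I would identify the initial–data space. Proposition~\ref{abstrac} with $q=2$ requires $Q_0\in(H,\mathcal{D}(\mathcal{A}))_{1/2,2}$. Since $\mathcal{D}(\mathcal{A})=\mathcal{D}(B^2)$ with equivalent graph norms (for fixed $\varepsilon$), real interpolation gives $(H,\mathcal{D}(B^2))_{1/2,2}=\mathcal{D}(B)=H^2(\O;\mathbb{S}_0)\cap H^1_0(\O;\mathbb{S}_0)$, which is exactly the stated hypothesis on $Q_0$. (Here one uses the reiteration theorem: $(H,\mathcal{D}(B^2))_{1/2,2}$ coincides with the domain of the fractional power $B^{1}=B$ because for a positive self-adjoint operator $\mathcal{D}(B^\theta)$ interpolates the scale.) With $G\in L^2(\ts{T};\mathbb{S}_0)$ extended by zero to $L^2(0,\infty;H)$, Proposition~\ref{abstrac} yields a unique $Q$ on $[0,\infty)$ with $\partial_t Q,\mathcal{A}Q\in L^2(0,\infty;H)$; restricting to $(0,T)$ gives the solution in the claimed class and, because $\mathcal{A}Q\in L^2(0,T;H)$ together with elliptic regularity controls $\|Q\|_{H^4}$, we obtain $Q\in L^2(0,T;H^4(\O;\mathbb{S}_0)\cap H^1_0(\O;\mathbb{S}_0))\cap H^1(0,T;L^2(\O;\mathbb{S}_0))$.

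For the quantitative bound I would start from the estimate in Proposition~\ref{abstrac},
\begin{equation*}
 \|\partial_t Q\|_{L^2(0,\infty;H)}+\|\mathcal{A}Q\|_{L^2(0,\infty;H)}\leq C_2\barg{\|G\|_{L^2(0,\infty;H)}+\|Q_0\|_{(H,\mathcal{D}(\mathcal{A}))_{1/2,2}}}\,,
\end{equation*}
then use that $\|Q_0\|_{(H,\mathcal{D}(\mathcal{A}))_{1/2,2}}\leq C(\varepsilon)\|Q_0\|_{H^2(\O)}$ and that $\|Q\|_{H^4(\O)}\leq C(\varepsilon)\|\mathcal{A}Q\|_{L^2(\O)}+C(\varepsilon)\|Q\|_{L^2(\O)}$ on $\mathcal{D}(\mathcal{A})$ — the latter from the ellipticity of $\varepsilon\Delta^2-\Delta$ with these boundary conditions, or simply from $\|B^2Q\|_H\leq\varepsilon^{-1}\|\mathcal{A}Q\|_H+\varepsilon^{-1}\|BQ\|_H$ and interpolation of $\|BQ\|_H$ between $\|Q\|_H$ and $\|B^2Q\|_H$. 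A Gronwall/energy-type control of $\|Q\|_{L^2(0,T;H)}$ in terms of the data (multiply the equation by $Q$, integrate by parts using the boundary conditions to get $\tfrac12\tfrac{d}{dt}\|Q\|_H^2+\varepsilon\|\Delta Q\|_H^2+\|\nabla Q\|_H^2=(G,Q)_H$) closes the estimate, with the $\varepsilon$-dependence tracked through the interpolation constants. Restricting back to the finite interval $(0,T)$, $T<T_0$, absorbs the remaining terms. The only genuinely delicate point is the identification $\mathcal{D}(\mathcal{A})=H^4\cap\{Q|_{\partial\Omega}=\Delta Q|_{\partial\Omega}=0\}$ and the interpolation identity $(H,\mathcal{D}(\mathcal{A}))_{1/2,2}=H^2\cap H^1_0$; everything else is a routine application of the functional-analytic machinery once the operator has been correctly set up.
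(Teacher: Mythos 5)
Your proposal is correct and follows essentially the same route as the paper: both reduce the lemma to Proposition~\ref{abstrac} by showing that $\mathcal{A}=-\varepsilon\Delta^2+\Delta$ with domain $\{Q\in H^4:\,Q|_{\partial\Omega}=\Delta Q|_{\partial\Omega}=0\}$ is (negative) self-adjoint, hence generates a bounded analytic semigroup, and by identifying $(L^2,\mathcal{D}(\mathcal{A}))_{1/2,2}=H^2\cap H^1_0$. The only cosmetic differences are that you obtain self-adjointness and the interpolation identity via the spectral calculus for the Dirichlet Laplacian $B=-\Delta$ (writing $-\mathcal{A}=\varepsilon B^2+B$ and using $\mathcal{D}(B^\theta)$), whereas the paper argues via symmetry plus invertibility of $\lambda-\mathcal{A}$ and cites Grisvard for the interpolation space; both are valid.
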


\begin{proof}
 Let $\mathcal{A}=-\varepsilon\Delta^2+\Delta$. We first show that
 \begin{equation*}
   \mathcal{A}\colon\mathcal{D}(\mathcal{A})
=\left\{Q\in H^4(\O;\mathbb{S}_0);~Q|_{\p\O}=0,~\Delta Q|_{\p\O}=0\right\}\to L^2(\O;\mathbb{S}_0)
 \end{equation*}
 generates a bounded analytic semi-group. By standard elliptic regularity theory,
the operator equation $\mathcal{A}u=f$ has a unique solution
$u\in \mathcal{D}(\mathcal{A})$ for each $f\in L^2(\O;\mathbb{S}_0)$.
Moreover, $\mathcal{A}$ is a symmetric operator and negative. By standard theory of elliptic equations $\lambda - \mathcal{A}$ is invertible for all $\lambda>0$ sufficiently large. Hence $A$ is self-adjoint and has  negative spectrum.
Thus it generates an analytic semi-group. Finally, $(L^2(\O;\mathbb{S}_0),\mathcal{D}(\mathcal{A}))_{\frac 12,2}=H^2(\O;\mathbb{S}_0)\cap H^1_0(\O;\mathbb{S}_0)$ is a consequence of \cite[Th\'{e}or\`{e}me 8.1]{Grisvard} applied component-wise.
Hence
Proposition~\ref{abstrac} implies the statement of the lemma.
\end{proof}

For completeness we also quote existence results for
the Stokes system with a prescribed but variable viscosity,
\begin{equation}\label{stokvis}
\begin{alignedat}{2}
\dt{u}-\div\barg{\nu(\tildeQ)\Lsym{u} }+\nabla p &\, =f
\quad&&\text{ in }(0,T)\times \Omega\,,\\
\diverg  u &\,=0 &&\text{ in }(0,T)\times \Omega\,,\\
u|_{\partial\Omega}&\,=0&&\text{ on }(0,T)\times \partial\Omega\,,\\
u|_{t=0}&\,=u_0&&\text{ in }\Omega\,.
\end{alignedat}
\end{equation}


The following result establishes the regularity of the Stokes operator
with variable viscosity.

\begin{lemma}[Regularity of Stokes operator]~\\
\label{Stokesregularity}Let $\nu$ satisfy~\eqref{viscosity} and suppose that
 $Q_0\in H^2(\O;\mathbb{S}_0)\cap H^1_{0}(\O;\mathbb{S}_0)$.
Let $u\in H^1_{0,\sigma}(\O)$ be such that
  \begin{equation*}
    \barg{\nu(\tildeQ)\Lsym{u} ,\Lsym{\varphi}}_{L^2(\O)}=
\barg{f,\varphi} _{L^2(\O)},\quad \forall\varphi\in C^{\infty}_{0,\sigma}(\O),
  \end{equation*}
  where $f\in L^2(\O;\R^d)$. Then $u\in H^2(\O;\R^d)$ and there exists a constant
$C=C\barg{ \|\tildeQ\|_{H^2(\O)},\nu}$ independent of $f$ such that
  \begin{equation}\label{regstok}
    \|u\|_{H^2(\O)}\leq C\|f\|_{L^2(\O)}.
  \end{equation}
\end{lemma}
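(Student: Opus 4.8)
The plan is to view the variable-viscosity Stokes problem as a perturbation of the constant-viscosity Stokes problem and to invoke the classical $H^2$-regularity theory (e.g.\ the resolvent estimate~\eqref{stokregular}). Fix $Q_0$ and write $\nu_0 = \nu(\tildeQ) \in H^2(\Omega)$, which by the Sobolev embedding (recall $d\le 3$) lies in $C(\overline\Omega)$ and satisfies $c_0\le\nu_0\le c_1$. The weak formulation says that $u\in\V$ solves $(\nu_0\Lsym{u},\Lsym{\varphi})_{L^2} = (f,\varphi)_{L^2}$ for all $\varphi\in C^\infty_{0,\sigma}(\Omega)$. I would first record that $u$ together with some pressure $p\in L^2_0(\Omega)$ is a weak solution of the Stokes system~\eqref{stokvis} (time-independent version, i.e.\ $-\div(\nu_0\Lsym{u})+\nabla p = f$, $\div u = 0$, $u|_{\p\Omega}=0$), the pressure being produced in the standard way via de Rham's theorem once the equation is tested against general $\varphi\in C^\infty_0(\Omega;\R^d)$.

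The core step is a bootstrap/freezing-of-coefficients argument. Expanding the divergence,
\begin{equation*}
 \div\barg{\nu_0\Lsym{u}} = \nu_0\,\div\Lsym{u} + (\nabla\nu_0)\cdot\Lsym{u} = \tfrac12\nu_0\Delta u + (\nabla\nu_0)\cdot\Lsym{u},
\end{equation*}
where we used $\div\Lsym{u} = \tfrac12(\Delta u + \nabla\div u) = \tfrac12\Delta u$. Hence $u$ solves the constant-coefficient Stokes system
\begin{equation*}
 -\tfrac12\nu_0\Delta u + \nabla p = f + (\nabla\nu_0)\cdot\Lsym{u} =: \tilde f, \qquad \div u = 0, \qquad u|_{\p\Omega}=0.
\end{equation*}
Since $\nabla\nu_0\in H^1(\Omega)\hookrightarrow L^q(\Omega)$ for some $q>2$ (indeed $q=6$ when $d=3$) and $\Lsym{u}\in L^2(\Omega)$, an interpolation argument is needed to get $\tilde f$ into $L^2$: by~\eqref{inter4} applied to $\nu_0$ (or directly $\|\nabla\nu_0\|_{L^\infty}$ is \emph{not} available, so one instead uses $\|\nabla\nu_0\|_{L^6}\|\Lsym{u}\|_{L^3}$) we obtain, via~\eqref{inter3} and Young's inequality,
\begin{equation*}
 \|(\nabla\nu_0)\cdot\Lsym{u}\|_{L^2} \le C\|\nabla\nu_0\|_{L^6}\|\nabla u\|_{L^3} \le C\|\tildeQ\|_{H^2}\,\|\nabla u\|_{L^2}^{1/2}\|\nabla u\|_{H^1}^{1/2} \le \delta\|u\|_{H^2} + C_\delta\|\tildeQ\|_{H^2}^2\|u\|_{H^1}.
\end{equation*}
Now the constant-viscosity $H^2$-Stokes estimate~\eqref{stokregular} (with the fixed constant $\tfrac12\nu_0$, which is only $x$-dependent but bounded above and below — here one should localize: cover $\overline\Omega$ by finitely many balls on which $\nu_0$ oscillates less than a prescribed amount, freeze $\nu_0$ at the center of each ball, absorb the oscillation term $\|\,(\nu_0-\nu_0(x_k))\Delta u\|_{L^2}$ into the left side using the modulus of continuity of $\nu_0\in C(\overline\Omega)$, and patch with a partition of unity, the commutator terms between the cutoffs and the Stokes operator being lower order) yields
\begin{equation*}
 \|u\|_{H^2} \le C\barg{\|\tilde f\|_{L^2} + \|u\|_{H^1}} \le C\|f\|_{L^2} + \delta\|u\|_{H^2} + C_\delta\|\tildeQ\|_{H^2}^2\|u\|_{H^1}.
\end{equation*}
Choosing $\delta$ small absorbs the $H^2$-term, and finally $\|u\|_{H^1}\le C\|f\|_{L^2}$ follows by testing the weak formulation with $\varphi = u$ and using Korn's and Poincaré's inequalities together with the lower bound $\nu_0\ge c_0$. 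This gives~\eqref{regstok} with $C$ depending only on $\nu$ (through $c_0,c_1$ and its modulus of continuity, which in turn is controlled by $\|\tildeQ\|_{H^2}$ since $\tildeQ\in H^2\hookrightarrow C^{0,\alpha}$ for $d\le3$) and on $\|\tildeQ\|_{H^2}$.

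The main obstacle is the freezing-of-coefficients step: classical $H^2$-regularity for the Stokes operator is usually stated for constant (or smooth) viscosity, so one must justify that it survives for a merely continuous (Hölder) coefficient $\nu_0$. This is handled by the standard localization/patching argument sketched above, but it is the only place where something beyond bookkeeping is required; alternatively, one can cite a known result on the Stokes system with $C^{0,\alpha}$ or $W^{1,q}$ (with $q>d$) viscosity, e.g.\ from the literature on non-Newtonian or variable-density flows. I expect the authors either to do this localization explicitly or to reduce~\eqref{stokvis} to the classical case by such a citation.
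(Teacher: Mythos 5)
The paper does not actually prove this lemma: it cites \cite[Lemma 4]{Abels2009} (where $\nu$ depends on a scalar order parameter) and asserts that the proof carries over, so your freezing-of-coefficients/perturbation argument is essentially a reconstruction of the proof behind that citation rather than a different route. Your outline is sound, with one standard caveat: as written it yields only an a~priori estimate, since the identity $\div\barg{\nu_0\Lsym{u}}=\tfrac12\nu_0\Delta u+(\nabla\nu_0)\Lsym{u}$ and the absorption of the $\delta\|u\|_{H^2}$ term both presuppose $u\in H^2(\O)$; to conclude the asserted regularity one must supplement this with a difference-quotient argument, or approximate $\nu_0$ by smooth viscosities and invoke uniqueness of the weak solution, or run a method-of-continuity homotopy from the constant-viscosity Stokes operator.
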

 A proof can be found in \cite[Lemma 4]{Abels2009} in the case that $\nu$ depends on a scalar quantity $c$. But the proof directly carries over to the present situation.

\begin{remark}
Let  $A=-P_\sigma\div\barg{\nu(Q_0)\Lsym{u} }\colon \mathcal{D}(A)\subseteq \H\to \H$ be the Stokes operator with
prescribed viscosity, where $\mathcal{D}(A)= H^2(\Omega;\R^d)\cap \V$.
Then~\eqref{regstok} implies the following $H^2$-estimate for
the Stokes operator,
\begin{equation}\label{stokregular}
\|u\|^2_{H^2(\O)}\leq C_{\mathcal{S}}  \int_{\O}|Au|^2\,\mathrm{d}x
\,,
\end{equation}
where $C_{\mathcal{S}}=C_{\mathcal{S}}(\nu,\tildeQ)$ is independent of $u\in \mathcal{D}(A)$.

We will also extend $A=-P_\sigma\div\barg{\nu(\tildeQ)\Lsym{u} }$ to a map from $\V$ to $H^{-1}_\sigma(\Omega)$ defined by
\begin{equation*}
  \left\langle Au,\varphi \right\rangle_{H^{-1}_{\sigma},H^1_{0,\sigma}} = \int_\Omega \nu(\tildeQ)Du:D\varphi \, dx\qquad \text{for all}\ \varphi\in \V.
\end{equation*}
\end{remark}
Similarly to the case of constant viscosity, a weak solution of \eqref{stokvis} will be some  $u\in L^2(0,T;H^1_{0,\sigma}(\O))\cap H^1(0,T;H^{-1}_\sigma(\O))$ such that
\begin{alignat*}{2}
  \partial_t u + Au &= P_\sigma f&\quad&\text{in } L^2(0,T;H^{-1}_\sigma(\O)),\\
u|_{t=0} &= u_0 &\quad& \text{in } \H,
\end{alignat*}
where we note that $L^2(0,T;\V)\cap H^1(0,T;H^{-1}_\sigma(\O))\hookrightarrow C([0,T];\H)$ since $\V,\H$, and $H^{-1}_\sigma(\Omega)$ form a Gelfand triple.

The following result is concerned with existence and regularity of solutions
to~\eqref{stokvis}.

\begin{lemma}\label{stok}
  Let $\nu$ satisfy~\eqref{viscosity}, suppose that
 $Q_0\in H^2(\O;\mathbb{S}_0)\cap H^1_0(\O;\mathbb{S}_0),$ and
  let $u_0\in L^2_{\sigma}(\O)$, $f\in L^2(0,T;H^{-1}_\sigma(\Omega))$, where $T\in (0,\infty)$.
Then there is a unique weak solution $u\in L^2(0,T;H^1_{\sigma}(\O))\cap H^1(0,T;H^{-1}_\sigma(\O))$
to~\eqref{stokvis} satisfying  
  \begin{equation*}
    \|\dt{u}\|_{L^2(0,T;H^{-1}_\sigma(\Omega))}+\|u\|_{L^2(0,T;H^1_{\sigma}(\O))}
\leq C(\nu,\tildeQ,T)\barg{\|u_0\|_{L^2(\O)}+\|f\|_{L^2(0,T;H^{-1}_\sigma(\Omega))}}\,.
  \end{equation*}
 If, additionally, $u_0\in H^1_{0,\sigma}(\O)$ and $f\in L^2(0,T;L^2_{\sigma}(\O))$,
then $$u\in H^1(0,T;L^2_{\sigma}(\O))\cap L^2(0,T;H^2(\O;\mathbb{R}^d))$$  along with the estimate
  \begin{equation}\label{regustok}
    \|(\dt{u},\nabla^2 u )\|_{L^2(0,T;L^2(\O))}
\leq C(\nu,\tildeQ,T)
\barg{ \|u_0\|_{H^1_{0,\sigma}(\O)}+\|f\|_{L^2(0,T;L^2_{\sigma}(\O))}}\,.
  \end{equation}
Furthermore, $C(\nu,\tildeQ,T)$ can be chosen as an increasing function with respect to $T$ in both estimates.
\end{lemma}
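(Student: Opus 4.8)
The plan is to treat \eqref{stokvis} as an abstract evolution equation governed by the Stokes operator with prescribed viscosity, and to obtain the two regularity levels by a Galerkin scheme together with the elliptic regularity estimate \eqref{stokregular}. First I would set up the weak formulation exactly as described before the statement, using the extension of $A$ to $\V\to H^{-1}_\sigma(\O)$, and recall that $A$ is coercive on $\V$: by \eqref{viscosity} and Korn's inequality, $\langle Au,u\rangle\geq c_0\|Du\|_{L^2}^2\geq c\|u\|_{H^1}^2$ for $u\in\V$. Existence and uniqueness of the weak solution $u\in L^2(0,T;\V)\cap H^1(0,T;H^{-1}_\sigma(\O))$ for $u_0\in\H$, $f\in L^2(0,T;H^{-1}_\sigma(\O))$, together with the first estimate, then follows from the standard Lions--Lax--Milgram theory for parabolic problems with a bounded, coercive, time-independent bilinear form (equivalently, a Galerkin argument with the energy estimate obtained by testing with $u$ and using Gronwall); the increasing dependence of the constant on $T$ is automatic from the Gronwall factor $e^{CT}$.

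For the higher regularity I would assume $u_0\in\V$ and $f\in L^2(0,T;\H)$ and run a Galerkin approximation in the eigenbasis of the Stokes operator $A$ (which is self-adjoint and positive on $\H$ with compact resolvent, by Lemma~\ref{Stokesregularity}). Testing the Galerkin ODE with $\partial_t u_m$ and with $Au_m$ and using $\|Du\|_{L^2}^2 \sim (Au,u)$ gives, after an integration by parts in time of the viscosity term (which is legitimate since $\nu(\tildeQ)$ is time-independent, so $\frac{d}{dt}(\nu(\tildeQ)Du_m:Du_m)=2\nu(\tildeQ)Du_m:D\partial_t u_m$), the bound
\begin{equation*}
  \|\partial_t u_m\|_{L^2(0,T;\H)}^2 + \sup_{t\in[0,T]}\|u_m(t)\|_{H^1(\O)}^2
  \leq C(\nu,\tildeQ)\barg{\|u_0\|_{H^1(\O)}^2 + \|f\|_{L^2(0,T;\H)}^2}.
\end{equation*}
Passing to the limit $m\to\infty$ gives $u\in H^1(0,T;\H)$ and $u\in L^\infty(0,T;\V)$, and then the elliptic estimate \eqref{stokregular}, applied for a.e.\ $t$ to $Au(t)=P_\sigma f(t)-\partial_t u(t)\in\H$, upgrades this to $u\in L^2(0,T;H^2(\O;\R^d))$ with \eqref{regustok}. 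Uniqueness in this class is inherited from uniqueness of the weak solution.

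The main obstacle I anticipate is the time-integration-by-parts step in the higher-order energy estimate: one has to be careful that $\nu(\tildeQ)$ depends only on $x$ (through the fixed initial datum $Q_0=\tildeQ$, not on $t$), so that testing with $Au_m$ indeed produces $\tfrac{1}{2}\tfrac{d}{dt}\int_\O \nu(\tildeQ)|Du_m|^2$ up to a manageable commutator, rather than an uncontrolled term involving $\partial_t\nu$. A secondary technical point is that $Au_m$ need not lie in the Galerkin space unless one uses precisely the eigenfunctions of $A$ (or of the standard Stokes operator combined with a perturbation argument); using the spectral basis of $A$ itself, as in Lemma~\ref{Stokesregularity}, circumvents this cleanly. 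Everything else—the coercivity, the Gronwall estimates, the weak-limit extraction, and the monotone $T$-dependence of the constants—is routine, so I would not belabor it. A detailed proof along these lines can be found in \cite{Abels2009}; the modifications for the present $Q$-dependent viscosity are straightforward given Lemma~\ref{Stokesregularity}.
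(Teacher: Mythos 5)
Your proposal is correct, but it takes a genuinely different route from the paper. The paper's proof of this lemma is essentially a citation: it invokes \cite[Theorem 7, Lemma 6, and Proposition 4]{Abels2009}, where the viscosity depends on a \emph{time-dependent} scalar parameter $c\in BUC([0,T];W^1_r(\O))$, and observes that those proofs carry over verbatim to a tensor-valued argument; the monotone $T$-dependence is then obtained by extending $f$ by zero and solving on a larger interval. You instead exploit the fact that in the present lemma the coefficient $\nu(\tildeQ)$ is time-independent, so that $A$ is an autonomous, self-adjoint, coercive operator on $\H$ with form domain $\V$ and compact resolvent; this lets you run a clean Galerkin scheme in the eigenbasis of $A$, test with $\partial_t u_m$ (producing $\tfrac12\tfrac{d}{dt}\int_\O\nu(\tildeQ)|\Lsym{u_m}|^2$ with no commutator, exactly as you note), and upgrade to $H^2$ a posteriori via \eqref{regstok} applied to $Au(t)=P_\sigma f(t)-\partial_t u(t)$. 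Your two anticipated obstacles (time-independence of $\nu(\tildeQ)$ and membership of $Au_m$ in the Galerkin space) are correctly identified and correctly resolved. What each approach buys: the paper's citation covers the harder non-autonomous setting at no extra cost and keeps the text short; your argument is self-contained and more elementary, but would not survive if $\tildeQ$ were replaced by a time-dependent $Q(t)$ (which is not needed anywhere in this paper). Minor remarks: coercivity is even simpler than you state, since for divergence-free $u\in H^1_0$ one has $\|\Lsym{u}\|_{L^2}^2=\tfrac12\|\nabla u\|_{L^2}^2$ by integration by parts, so full Korn is not required; and for this coercive autonomous problem the Gronwall factor is not actually needed, though an increasing-in-$T$ constant is of course harmless and matches the statement.
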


\begin{proof}
 The proof is an immediate consequence of~\cite[Theorem 7, Lemma 6, and Proposition 4]{Abels2009}. In these results the viscosity $\nu$ depends on a scalar parameter $c\in BUC([0,T];W^1_r(\O))$ for some $r>d$. But the proofs directly carry over to the case that $c\in BUC([0,T];W^1_r(\Omega;\R^N))$ for any $N\in\mathbb{N}$ provided $\nu\in C^2_b(\R^N;\R)$ is bounded from below by a positive constant. Formally, one could also replace $c$ by $\nu(Q)$ and replace $\nu$ by an auxiliary function, which is the identity on $[c_0,c_1]$. Finally, the monotone dependence of $C(\nu,\O,\tildeQ,T)$ on $T>0$ can be easily verified by extending $f$ by zero for $t>T$ and applying the result on a time interval $(0,T')$ with $T'>T$.
\end{proof}


Finally, by the Sobolev embedding theorem, we deduce 
that
$\|Q\|_{L^{\infty}(\O)}\leq C\|Q\|_{H^2(\O)}$.
More generally, whenever $Q\in H^2(\O)$, then~\eqref{viscosity}
implies that
\begin{align}\label{boundvis}
     \|(\nabla \nu)(Q)\|_{L^{\infty}(\O)}+\|(\nabla^2\nu)(Q)\|_{L^{\infty}(\O)}
\leq C\barg{ \|Q\|_{H^2(\O)} }\,.
\end{align}


\subsection{An algebraic identity}
The following algebraic identity will be used later.

\begin{lemma}
Let  $Q,\,G\in L^1(\O)$
be symmetric tensors and $u\in W^{1,1}(\O)$. Then for a.e.~$x\in \Omega$,
\begin{align}\label{tensor4}
 S(\nabla u, Q):G = \nabla u: (GQ-QG) \,.
\end{align}
\end{lemma}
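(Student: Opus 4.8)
The plan is to verify the identity \eqref{tensor4} by a direct computation using the definition of $S$ in \eqref{tensors}, the definition of the contraction $A:B=\tr(A^TB)=A_{ij}B_{ij}$, and the linear-algebra identity \eqref{linalg}. First I would recall that $S(\nabla u,Q)=S(W(u),Q)=W(u)Q-QW(u)$, so that
\begin{align*}
S(\nabla u,Q):G = \barg{W(u)Q-QW(u)}:G = \barg{W(u)Q}:G - \barg{QW(u)}:G.
\end{align*}

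Next I would massage each of the two terms using \eqref{linalg}. For the first term, taking $A=W(u)$, $B=Q$, $C=G$ in $(AB):C = A:(CB^T)$ and using that $Q=Q^T$ (since $Q\in\mathbb{S}_0$, or more generally $Q$ is symmetric by hypothesis) gives $\barg{W(u)Q}:G = W(u):(GQ)$. For the second term, taking $A=Q$, $B=W(u)$, $C=G$ in $(AB):C = B:(A^TC)$ and using $Q=Q^T$ gives $\barg{QW(u)}:G = W(u):(QG)$. Subtracting, we obtain
\begin{align*}
S(\nabla u,Q):G = W(u):(GQ-QG).
\end{align*}

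The final step is to replace $W(u)$ by $\nabla u$ on the right-hand side. Since $W(u)=\frac12\barg{\nabla u-(\nabla u)^T}$, it suffices to show that the symmetric part $D(u)=\frac12\barg{\nabla u+(\nabla u)^T}$ contributes nothing, i.e.\ $D(u):(GQ-QG)=0$. This holds because $GQ-QG$ is antisymmetric whenever $G$ and $Q$ are symmetric: indeed $(GQ-QG)^T = Q^TG^T - G^TQ^T = QG-GQ = -(GQ-QG)$. A symmetric matrix and an antisymmetric matrix are orthogonal with respect to the Frobenius inner product $A:B$, so $D(u):(GQ-QG)=0$, and hence $W(u):(GQ-QG) = \nabla u:(GQ-QG)$, which is exactly \eqref{tensor4}. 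All identities are pointwise in $x$, so the stated regularity $Q,G\in L^1(\Omega)$ and $u\in W^{1,1}(\Omega)$ is more than enough for the equality to hold a.e.

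There is no real obstacle here; the only point requiring a little care is the bookkeeping of transposes when applying \eqref{linalg}, together with the observation that the commutator of two symmetric matrices is antisymmetric, which is what lets us pass from the vorticity tensor $W(u)$ to the full gradient $\nabla u$. One could alternatively carry the whole computation out in index notation with Einstein's summation convention, which makes the symmetry cancellations transparent but is slightly more tedious to typeset.
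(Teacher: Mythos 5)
Your proof is correct and follows essentially the same route as the paper: a direct computation from the definition of $S$ using the identity \eqref{linalg} and the symmetry of $Q$ and $G$. The only cosmetic difference is that you first reduce to $W(u):(GQ-QG)$ and then invoke the antisymmetry of the commutator to replace $W(u)$ by $\nabla u$, whereas the paper expands all four terms of $2S(\nabla u,Q):G$ at once; the underlying cancellation is identical.
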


\begin{proof}
We use~\bref{linalg} and the symmetries to obtain
\begin{align*}
2 S(\nabla u, Q):G
&\, = \nabla u Q: G
-(\nabla u)^T Q: G
-Q  \nabla u:  G
+Q (\nabla u)^T  :G \\
&\, = 2\barg{\nabla u :G Q -  \nabla u : QG   }\,.
\end{align*}
This proves the assertion.
\end{proof}
We note that the lemma implies $\tr(S(\nabla u,Q))=S(\nabla u,Q):I=0$. Therefore $S(\nabla u,Q)\in \mathbb{S}_0$ since $S(\nabla u,Q)$ is symmetric.


\section{Existence for the linear system with singular perturbation}
\label{singularperturbation}
The key ingredient in the proof of the good mapping properties of the linearized
operator in~\bref{linearization} below is the  existence of solutions for
the following system including a singular perturbation. Suppose that
$\epsilon>0$ and consider the linear system
\begin{equation}\label{approstrong}
\begin{alignedat}{2}
\dt{u}-\div\barg{ \nu(\tildeQ)\Lsym{u}  }
+\nabla p-\diverg\sigma(\tildeQ,  Q)
&\,=F\qquad &&\text{ in }(0,T)\times \Omega\,,\\
\diverg  u&\,=0&&\text{ in }(0,T)\times \Omega\, ,\\
\dt{Q}+\varepsilon\Delta^2 Q-\Delta Q-S(\nabla u, \tildeQ)
&\,=G&&\text{ in }(0,T)\times \Omega\,,\\
u|_{\partial\O}&\,=0 &&\text{ on }(0,T)\times\partial\O\,,\\
 Q|_{\partial\O}=\Delta Q|_{\partial\O}&\,=0&&
\text{ on }(0,T)\times\p\O\,,\\
 (u,Q)|_{t=0}&\,=(u_0,Q_0)\quad && \text{ on }\Omega\,.
 \end{alignedat}
\end{equation}

\begin{prop}\label{approximate1}
Let $T>0$ be fixed. Then there exists an $\epsilon_0=\epsilon_0(T)>0$ such that
for all $\varepsilon\in(0, \epsilon_0)$ and for
all $(F,G,u_0,Q_0)\in Y_T\times Y_0$ the system~\eqref{approstrong}
has a unique solution
  \begin{equation}\label{regularity}
    (u,Q)\in X_T\text{ with } Q\in L^2(0,T;H^4(\O;\mathbb{S}_0))\,.
  \end{equation}
Moreover,   the following estimate holds
  \begin{equation}\label{bound}
    \|(u,Q)\|_{X_T}+\sqrt{\varepsilon}\|\Delta^2 Q\|_{L^2(\O_T)}\leq C(\tildeQ,T)\|(F,G,u_0,Q_0)\|_{Y_T\times Y_0}\,,
\end{equation}
where $C(\tildeQ,T)$ is monotone increasing in $T$ and
independent of $\epsilon$.
\end{prop}

\begin{proof}
We divide the proof into several steps. In the first steps we prove
local existence, then we address estimates uniformly in $\epsilon>0$ and
conclude global well-posedness.
While carrying out the program, we verify an energy dissipation law which
implies some lower-order a priori estimates for $(u,Q)$,
formulate a higher-order energy estimate, and finally
take advantage of a cancellation
of critical higher-order terms, an observation that was already used
in~\cite{PaicuZarnescuSIMA2011,PaicuZarnescuARMA2012} in the case when $\O=\mathbb{R}^d$. An application of
Gronwall's inequality concludes the proof.

\medskip

\noindent
\textit{Step 1: Local existence in $X_T$.}
We show the local existence via a fixed-point approach. In this step we prove existence on a 
time interval $(0,T)$, where $T$ may depend on $\epsilon$.
Fix any $T>0$. For any $f\in L^2(\ts{T};\R^d)$, we consider the linear system
\begin{equation}\label{fixed}
\begin{alignedat}{2}
\dt{u}-\div\barg{ \nu(\tildeQ)\Lsym{u}  }
+\nabla p+f
&\,=F\quad &&\text{ in }(0,T)\times \Omega\,,\\
\diverg  u&\,=0&&\text{ in }(0,T)\times \Omega\, ,\\
\dt{Q}+\varepsilon\Delta^2 Q-\Delta Q-S(\nabla u, \tildeQ)
&\,=G&&\text{ in }(0,T)\times \Omega\,,\\
u|_{\partial\O}&\,=0 &&\text{ on }(0,T)\times\partial\O\,,\\
 Q|_{\partial\O}=\Delta Q|_{\partial\O}&\,=0&&
\text{ on }(0,T)\times\p\O\,,\\
 (u,Q)|_{t=0}&\,=(u_0,Q_0)\quad && \text{ on }\Omega\,.
 \end{alignedat}
\end{equation}
Note that~\bref{fixed} corresponds to~\bref{approstrong} where the term
$-\diverg \sigma(\tildeQ, Q)$ has been replaced by $f$.
By Lemma~\ref{stok}, the first equation of~\eqref{fixed} has a
unique solution
$u\in X_{T;u}$ on $(0,T)$.
Then $S(\nabla u,\tildeQ)\in L^2(\ts{T};\mathbb{S}_0)$, recall~\eqref{tensors},
and by Lemma~\ref{para} the third equation of~\eqref{fixed}
has a unique solution $Q\in L^2(0,T;H^4(\O;\mathbb{S}_0))\cap H^1(0,T;L^2(\O;\mathbb{S}_0))$ on $(0,T)$.
As a result we obtain an affine mapping
\begin{align}\label{linear2}
\begin{split}
  \mathscr{F} \colon  L^2(\ts{T};\R^d)&\,\to L^2(\ts{T};\R^d)\,,\\
  f&\,\mapsto \mathscr{F}(f)=-\diverg  \sigma(\tildeQ,  Q)
=-\diverg(\tildeQ\Delta Q-\Delta Q\tildeQ)\,.
\end{split}
\end{align}
In order to employ Banach's fixed-point theorem to  $\mathscr{F}$,
we will show that
there exists a $T$ positive but small enough such that
$\mathscr{F}$ maps from $L^2(\ts{T};\R^d)$ into itself and
is a contraction.

\medskip

\noindent
\textit{Step 2: The map $\mathscr{F}$ is a contraction for $T>0$ small enough.}
Fix $T>0$, $f_1,f_2\in L^2(\O_T;\R^d)$ and denote by $(u_i, Q_i)$, $i=1,2$, the solutions
of the systems
\begin{equation}\label{contraction}
\begin{alignedat}{2}
\dt{u_{i}}-\div\barg{ \nu(\tildeQ)\Lsym{u_i}  }
+\nabla p_i+f_i
&\,=F\quad &&\text{ in }(0,T)\times \Omega\,,\\
\diverg  u_i&\,=0&&\text{ in }(0,T)\times \Omega\, ,\\
\dt{Q_i}+\varepsilon\Delta^2 Q_i-\Delta Q_i-S(\nabla u_i, \tildeQ)
&\,=G&&\text{ in }(0,T)\times \Omega\,,\\
u_i|_{\partial\Omega}&\,=0 &&\text{ on }(0,T)\times \partial\O\,,\\
 Q_i|_{\partial\Omega}=\Delta Q_i|_{\partial\Omega}&\,=0&&
\text{ on } (0,T)\times \p\O\,,\\
 (u_i,Q_i)|_{t=0}&\,=(u_0,Q_0) &\quad& \text{ in }\Omega\,.\\
 \end{alignedat}
\end{equation}
Let $\hat{u}=u_1-u_2$, $\hat{Q}=Q_1-Q_2$, $\hat{f}=f_1-f_2$,
and note that by~\eqref{linear2}
the identity
\begin{align*}
 \mathscr{F}(f_1)-\mathscr{F}(f_2)
=-\diverg (\tildeQ\Delta \hat{Q}-\Delta \hat{Q}\tildeQ)
\end{align*}
follows. Hence~\eqref{contraction} leads to
\begin{equation}\label{zero}
\begin{alignedat}{2}
\dt{\hat u}-\div\barg{ \nu(\tildeQ)D(\hat u) }
+\nabla \hat p+\hat f
&\,=0\quad &&\text{ in }(0,T)\times \Omega\,,\\
\diverg  \hat u&\,=0&&\text{ in }(0,T)\times \Omega\, ,\\
\dt{\hat Q}+\varepsilon\Delta^2 \hat Q-\Delta \hat Q-S(\nabla \hat u, \tildeQ)
&\,=0&&\text{ in }(0,T)\times \Omega\,,\\
\hat u|_{\partial\O}&\,=0 &&\text{ on }(0,T)\times\partial\O\,,\\
 \hat Q|_{\partial\O}=\Delta \hat Q|_{\partial\O}&\,=0&&
\text{ on }(0,T)\times \p\O\,,\\
 (\hat u,\hat Q)|_{t=0}&\,=(0,0) &\quad& \text{ in }\Omega\,.
 \end{alignedat}
\end{equation}
For almost every $t\in [0,T]$,
\begin{equation*}
  \|\mathscr{F}(f_1)(t)-\mathscr{F}(f_2)(t)\|_{L^2(\O)}
\leq \|\Delta \hat{Q}(t)\tildeQ-\tildeQ\Delta \hat{Q}(t)\|_{H^1(\O)}
\leq C\|\Delta \hat{Q}(t)\|_{H^1(\O)}\|\tildeQ\|_{H^2(\O)}\,.
\end{equation*}
 By the interpolation inequality $\|\hat{Q}\|_{H^3(\Omega)}\leq C\|\hat{Q}\|_{H^2(\Omega)}^{\frac12}\|\hat{Q}\|_{H^4(\Omega)}^{\frac12}$
\begin{equation*}
  \|\mathscr{F}(f_1)(t)-\mathscr{F}(f_2)(t)\|^2_{L^2(\O)}
\leq C\| \hat{Q}(t)\|_{H^2(\O)}\| \hat{Q}(t)\|_{H^4(\O)}\|\tildeQ\|^2_{H^2(\O)}\,.
\end{equation*}
As a result,
\begin{equation*} 
\begin{split}
\|\mathscr{F}(f_1)-\mathscr{F}(f_2)\|^2_{L^2(\ts{T})}
\leq & C\| \hat{Q}\|_{L^{\infty}(0,T;H^2)}
\|\tildeQ\|^2_{H^2(\O)} \|\hat{Q}\|_{L^1(0,T;H^4)}\\
\leq & C\| \hat{Q}\|_{L^{\infty}(0,T;H^2)}
\|\tildeQ\|^2_{H^2(\O)} T^{\frac 12}\|\hat{Q}\|_{L^2(0,T;H^4)}\,.
\end{split}
\end{equation*}
We estimate $\|\hat{Q}\|_{L^2(0,T;H^4)}$ and $\| \hat{Q}\|_{L^{\infty}(0,T;H^2)}$
by making use of the third equation in~\eqref{zero}.
In view of the estimates in Lemma~\ref{para} and \eqref{interpo2}
 \begin{equation*}
  \| \hat{Q}\|_{L^{\infty}(0,T;H^2)}^2+\|\hat{Q}\|^2_{L^2(0,T;H^4)}
\leq C(\varepsilon)\|S(\nabla \hat{u},\tildeQ)\|^2_{L^2(\ts{T})}\,,
\end{equation*}
where $C$ is independent of $T>0$.
 For $\|S(\nabla \hat{u},\tildeQ)\|_{L^2(\ts{T})}$
we use the first equation in~\eqref{zero} and the bound of $\hat{u}$ by
Lemma~\ref{stok},
\begin{equation} 
\begin{split}
  \|S(\nabla \hat{u},\tildeQ)\|_{L^2(\ts{T})}
\, \leq \|\tildeQ\|_{L^{\infty}(\O)}\|\hat{u}\|_{L^2(0,T;H^1)}&\leq C\|\tildeQ\|_{H^2(\O)}\|\hat{u}\|_{L^2(0,T;H^1)}\\
&\,  \leq C(\varepsilon)\|\tildeQ\|_{H^2(\O)}
   \|\hat{f}\|_{L^2(\ts{T})} \,.
  \end{split}
\end{equation}
Based on these estimates
we infer
 \begin{equation*}
\|\mathscr{F}(f_1)-\mathscr{F}(f_2)\|^2_{L^2(\ts{T})}
    \leq  T^{\frac 12}C(\varepsilon)\|\tildeQ\|^4_{H^2(\O)}
\|\hat{f}\|^2_{L^2(\ts{T})},
\end{equation*}
and consequently  $\mathscr{F}$ is a contraction provided that
$T \ll 1$. 
\medskip

\noindent
\textit{Step 3: The map $\mathscr{F}$  is a self-map.}
Employing the result in the previous step, we have
 \begin{equation*}
 \begin{split}
   \|\mathscr{F}(f)\|_{L^2(\ts{T})}&\leq
\|\mathscr{F}(f)-\mathscr{F}(0)\|^2_{L^2(\ts{T})}+
\|\mathscr{F}(0)\|^2_{L^2(\ts{T})}
    \\
    &\leq  T^{\frac 12}C(\varepsilon)\|\tildeQ\|^4_{H^2(\O)}
\|f\|^2_{L^2(\ts{T})}+\|\mathscr{F}(0)\|^2_{L^2(\ts{T})}.
 \end{split}
\end{equation*}
It remains to show that $\mathscr{F}(0)\in L^2(\O_T)$.
For almost every $t\in [0,T]$,
\begin{equation*}
  \|\mathscr{F}(0)(t)\|_{L^2(\O)}
\leq \|\Delta Q(t)\tildeQ-\tildeQ\Delta Q(t)\|_{H^1(\O)}
\leq C\|\Delta Q(t)\|_{H^1(\O)}\|\tildeQ\|_{H^2(\O)}\,.
\end{equation*}
  As a result
  \begin{equation}\label{fixpoint1}
    \|\mathscr{F}(0)(t)\|_{L^2(\O_T)}\leq 
  C\| Q(t)\|_{L^2(0,T;H^4(\O))}\|\tildeQ\|_{H^2(\O)}\,.
  \end{equation}
We estimate $\|Q\|_{L^2(0,T;H^4)}$
by making use of the third equation in~\eqref{fixed}.
In view of the estimates in Lemma~\ref{para},
\begin{equation}\label{fixpoint2}
  \|Q\|^2_{L^2(0,T;H^4)}
\leq C(\varepsilon)(\|Q_0\|^2_{H^2(\O)}+\|G\|^2_{L^2(\ts{T})}
+\|S(\nabla u,\tildeQ)\|^2_{L^2(\ts{T})})\,.
\end{equation}
To estimate $\|S(\nabla u,\tildeQ)\|_{L^2(\ts{T})}$
we use again the first equation in~\eqref{fixed} and the bound of $u$ by
Lemma~\ref{stok},
\begin{equation}\label{fixpoint3}
\begin{split}
  \|S(\nabla u,\tildeQ)\|_{L^2(\ts{T})}
&\, \leq \|\tildeQ\|_{L^{\infty}(\O)}\|u\|_{L^2(0,T;H^1)}\leq C\|\tildeQ\|_{H^2(\O)}\|u\|_{L^2(0,T;H^1)}\\
&\,  \leq C(\varepsilon)\|\tildeQ\|_{H^2(\O)}
  \barg{ \|u_0\|_{H^1_{0,\sigma}(\O)}+\|F\|_{L^2(\ts{T})} }\,.
  \end{split}
\end{equation}
We deduce from~\eqref{fixpoint1},~\eqref{fixpoint2} and~\eqref{fixpoint3} that:
\begin{equation}\label{contrac1}
  \|\mathscr{F}(0)\|^2_{L^2(\ts{T})}
\leq C(\varepsilon,u_0,\tildeQ,F,G)\,.
\end{equation}
Combining the assertion of Step 2 and 3 along with Banach's fixed-point theorem implies that there exists a unique solution
on the time interval $(0, T)$. 
\medskip

\noindent
\textit{Step 4: The basic energy estimate.}
In order to prove the existence of a solution $(u,Q)$ on $(0,T)$ for
$T>0$ given,
suppose that $(u,Q)$ is a solution on $(0,T)$ with the
regularity in the assertion of the proposition. If we establish the
a~priori estimate in the assertion of the proposition, then we obtain some $\Teps>0$ such that the solution is uniquely
determined on $(0,\Teps)$.
In view of the uniform bounds on $(0,T)$ we may
solve on $(\Teps, 2\Teps)$ and obtain the existence of a unique solution in
finitely many steps.

Here we establish a first estimate. Fix $T>0$.
There exists a constant $C$
which may depend on $T$ and may be monotone increasing in $T$ but is
independent of $\epsilon$ such that
\begin{align*}
   \|\nabla Q\|_{L^{\infty}(0,T;L^2)}
&+\|u\|_{L^{\infty}(0,T;L^2)}
+\|\nabla u\|_{L^2(\ts{T})}+\|\Delta Q\|_{L^2(\ts{T})}\\
 &  \leq C\barg{ \|F\|_{L^2(\ts{T})}+\|G\|_{L^2(\ts{T})}
+\|u_0\|_{L^2_{\sigma}(\O)}+\|\nabla Q_0\|_{L^2(\O)} }\,.
\end{align*}
The proof is based on the observation that,
like the fully nonlinear system~\eqref{strongformeqn},
the approximating linearized
system~\eqref{approstrong} satisfies an energy dissipation law. To obtain this law,
use $u$ and $\Delta Q$ as test functions in the equations for $u$ and $ Q$,
respectively, to obtain
\begin{equation*}
  \frac 12\frac{d}{dt}\int_{\O}|u|^2
+\int_{\O}\nu(\tildeQ)|\Lsym{u} |^2
-\int_{\O}\diverg  \sigma(\tildeQ,  Q)\cdot u  =\int_{\O}F\cdot u
\end{equation*}
and
\begin{equation*}
  \frac 12\frac{d}{dt}\int_{\O}|\nabla Q|^2
+\varepsilon\int_{\O}|\nabla\Delta Q|^2
+\int_{\O}|\Delta Q|^2+\int_{\O}S(\nabla u,\tildeQ) : \Delta Q
=-\int_{\O}G : \Delta Q\,
\end{equation*}
for every $t\in (0,T)$.
The key observation is that the algebraic identity~\eqref{tensor4}
leads together with an integration by parts to
\begin{equation*}\label{cancel}
  \begin{split}
    \int_{\O}S(\nabla u,\tildeQ) : \Delta Q
=\int_{\O}  \barg{(\Delta Q)\tildeQ-\tildeQ\Delta Q  }:\nabla u
=-\int_{\O}\sigma(\tildeQ,  Q):\nabla u
=\int_{\O}\diverg\sigma(\tildeQ,  Q)\cdot u\,.
  \end{split}
\end{equation*}
Therefore the critical terms cancel and the combination
of the foregoing identities implies
\begin{align*}
  \frac 12\frac{d}{dt}\int_{\O}\barg{ |\nabla Q|^2+|u|^2 }
+\int_{\O}\nu(\tildeQ)|\Lsym{u} |^2
+\varepsilon\int_{\O}|\nabla\Delta Q|^2
+\int_{\O}|\Delta Q|^2
  =\int_{\O}F\cdot u-\int_{\O}G:\Delta Q\,.
\end{align*}
By~\eqref{viscosity}, the Cauchy-Schwarz and Young inequality,
Gronwall's Lemma, and Korn's inequality we obtain the assertion in this step.
This estimate will be used frequently in the sequel
when we estimate the lower-order terms.

\medskip

\noindent
\textit{Step 5: Higher-order energy estimate.}
In this step we assert the validity of the estimate
\begin{align}\nonumber
&
\frac 12\frac{d}{dt}\int_{\O}\nu(\tildeQ)\barg{2|\Lsym{u} |^2+|\Delta Q|^2 }
 \\\label{total} &\qquad
+\int_{\O}\barg{
2|Au|^2+ \nu(\tildeQ)|\nabla \Delta Q|^2
    +\varepsilon \nu(\tildeQ)|\Delta^2  Q|^2 } + \mathcal{A}
= \sum_{i=1}^5\mathcal{J}_i\,,
\end{align}
where $\mathcal{A}$ and the terms $\mathcal{J}_i$ are defined below,
see~\bref{callow},~\bref{defA}, and~\bref{defJ16}.
From this estimate we infer the estimates asserted in the proposition.

Unless otherwise indicated, the calculations in this step
are carried out for almost every $t\in [0,T]$.
This is possible since we already showed the existence of a local solution
on $(0,\Teps)$
which has the regularity given by~\eqref{regularity}.
In the following we denote $\tilde{A} u=-\div(\nu(\tildeQ)\Lsym{u})$. Then
\begin{equation}\label{stokesoper}
  Au = P_\sigma\tilde{A} u.
\end{equation}
We test the equation for $u$ in~\eqref{approstrong} by $Au$
and get
\begin{equation*}
 \int_\Omega \left( \dt{u}\cdot Au + |Au|^2
-\diverg \sigma(\tildeQ, Q)\cdot Au \right)
  =\int_\Omega F\cdot Au\,.
\end{equation*}
Since $u$ is divergence-free and in view of the boundary conditions for $u$,
an integration by parts yields
\begin{align*}
\begin{split}
   \frac 12\frac{d}{dt}\int_{\O}\nu(\tildeQ)|\Lsym{u} |^2
+\int_{\O}|Au|^2   -\int_{\O}\diverg\sigma(\tildeQ,  Q)\cdot Au
-\int_{\O}F \cdot Au=0\,.
\end{split}
\end{align*}
Note that the  Helmholtz decomposition for $\tilde{A}u$ is given by 
\begin{align*}
 \tilde{A}u=Au+\nabla q\quad \text{ for some }q\in H^1(\O)\,,
\end{align*}
cf. \eqref{eq:Helmholtz}.
Hence
\begin{equation*}
  \int_{\O}\diverg\sigma(\tildeQ,  Q) \cdot \tilde{A}u
=\int_{\O}\diverg\sigma(\tildeQ,   Q) \cdot Au
+\int_{\O}\diverg\sigma(\tildeQ, Q)\cdot \nabla q\,.
\end{equation*}
Choose a sequence $(q_k)_{k\in \mathbb{N}}\subset H^2(\Omega)$ with $q_k\to_{k\to\infty} q$
in $H^1(\Omega)$. Since $\sigma$ is skew-symmetric and since
$\tildeQ$ vanishes on $\p\O$, we infer
\begin{equation*}
  \int_{\O}\diverg\sigma(\tildeQ,  Q)\cdot \nabla q
=-\lim_{k\to\infty}
\int_{\O}\sigma(\tildeQ,  Q): \nabla^2 q_k = 0\,.
\end{equation*}
The foregoing identities
imply the following energy-type estimate for $u$,
\begin{align}\label{velicitynew}
\begin{split}
   \frac 12\frac{d}{dt}\int_{\O}\nu(\tildeQ)|\Lsym{u} |^2
+\int_{\O}|Au|^2
   -\int_{\O}\diverg \sigma(\tildeQ, Q)\cdot \tilde{A}u
+\int_{\O}F \cdot Au=0\,.
\end{split}
\end{align}
Now we consider the equation for $Q$
and use $\Delta(\nu(\tildeQ)\Delta Q)$ as  a test function for the evolution equation for $Q$ to obtain
\begin{equation*}
\begin{split}
    &\int_\Omega \left(\dt{Q}: \Delta  \barg{ \nu(\tildeQ)\Delta Q}
+\varepsilon\Delta^2  Q:\Delta \barg{ \nu(\tildeQ)\Delta Q} \right)\\
&\qquad
+\int_\Omega
\left( -\Delta Q : \Delta \barg{ \nu(\tildeQ)\Delta Q }
-S(\nabla u,\tildeQ) : \Delta \barg{ \nu(\tildeQ)\Delta Q } \right)
 =\int_\Omega G : \Delta \barg{\nu(\tildeQ)\Delta Q }\,.
  \end{split}
\end{equation*}
In view of the boundary conditions for $Q$ we deduce
\begin{equation*}
  \begin{split}
   \int_{\O}\dt{Q} : \Delta \barg{\nu(\tildeQ)\Delta Q }
    =\frac 12\frac{d}{dt}\int_{\O}\nu(\tildeQ)|\Delta Q|^2
  \end{split}
\end{equation*}
and analogously with one integration by parts and product rule,
\begin{equation*}
  \begin{split}
    \int_{\O}-\Delta Q : \Delta \barg{\nu(\tildeQ)\Delta Q }
    =\int_{\O}\nu(\tildeQ)|\nabla \Delta Q|^2
+\int_{\O}\nabla \Delta Q : \nabla  (\nu(\tildeQ)) \otimes \Delta Q\,.
  \end{split}
\end{equation*}
Here we write for simplicity
$\nabla \Delta Q : \nabla  (\nu(\tildeQ)) \otimes \Delta Q
=\partial_\gamma\Delta Q_{\alpha\beta}\partial_\gamma (\nu(\tildeQ))
\Delta Q_{\alpha\beta}$.
The combination of these three identities for $Q$
yields:
\begin{equation*}
  \begin{split}
    \frac 12\frac{d}{dt}\int_{\O}\nu(\tildeQ)|\Delta Q|^2
+\int_{\O}\nu(\tildeQ)|\nabla \Delta Q|^2
+\int_{\O}\nabla \Delta Q : \nabla  (\nu(\tildeQ))\otimes \Delta Q\\
    +\varepsilon\int_{\O}\nu(\tildeQ)|\Delta^2  Q|^2
+2\varepsilon\int_{\O}
\Delta^2  Q_{\alpha\beta}
\barg{ \nabla ( \nu(\tildeQ)) \cdot\nabla\Delta Q_{\alpha\beta}}
+\varepsilon\int_{\O} \Delta  (\nu(\tildeQ))\Delta^2  Q: \Delta Q \\
    =\int_{\O}S(\nabla u,\tildeQ) : \Delta \barg{\nu(\tildeQ)\Delta Q }
+\int_{\O}G : \Delta \barg{ \nu(\tildeQ)\Delta Q }\,.
  \end{split}
\end{equation*}
We can rewrite the above equation by leaving all the
lower-order terms on the right-hand side,
\begin{equation}\label{qes3}
  \begin{split}
    \frac 12\frac{d}{dt}\int_{\O}\nu(\tildeQ)|\Delta Q|^2
&+\int_{\O}\nu(\tildeQ)|\nabla \Delta Q|^2
    +\varepsilon\int_{\O}\nu(\tildeQ)|\Delta^2  Q|  \\
  &\quad \quad  =\int_{\O}S(\nabla u,\tildeQ) : \Delta \barg{\nu(\tildeQ)\Delta Q }
+\mathcal{V}\,,
  \end{split}
\end{equation}
where $\mathcal{V}$  is defined by 
\begin{equation}\label{callow}
\begin{split}
    \mathcal{V}
=&
-\int_{\O}
\nabla \Delta Q:\barg{ \nabla ( \nu(\tildeQ))\otimes \Delta Q }
-2\varepsilon\int_{\O}\Delta^2  Q_{\alpha\beta}
\nabla  (\nu(\tildeQ))\cdot\nabla\Delta Q_{\alpha\beta} \\
&-\varepsilon\int_{\O}\Delta  (\nu(\tildeQ))
\Delta^2  Q:\Delta Q
+\int_{\O}G : \Delta \barg{ \nu(\tildeQ)\Delta Q }
\equiv\sum_{i=1}^4\mathcal{J}_i\,.
\end{split}
\end{equation}
Finally  the combination of~\eqref{velicitynew} and~\eqref{qes3} gives
the assertion of this step, where
\begin{align}\label{defA}
\mathcal{A} = -2\int_{\O}\diverg\sigma(\tildeQ,  Q)
\cdot \tilde{A}u
-\int_{\O}S(\nabla u,\tildeQ):\Delta \barg{ \nu(\tildeQ)\Delta Q }
\end{align}
and
\begin{align}\label{defJ16}
\mathcal{J}_5 = -2\int_{\O}F \cdot Au\,.
\end{align}

\medskip

\noindent
\textit{Step 6: Estimates for the terms $\mathcal{J}_i$:}
These estimates are routine calculations and deferred to
Appendix~\ref{appendixA}. There it is shown that for all
$\delta>0$ there exists a constant $C=C(\delta, \Omega, Q_0)$ such that
  \begin{align}\nonumber
         \sum_{i=1}^5\mathcal{J}_i
\leq &\, 2\delta\|\nabla\Delta Q\|^2_{L^2}
+C(\delta,\tildeQ)\barg{ \|F\|_{L^2}^2+\|G\|^2_{H^1}+\|\Delta Q\|^2_{L^2} }
+\frac{3\varepsilon}{5}\|\Delta^2Q\|^2_{L^2}\\\label{total1}
&     +3\varepsilon C(\tildeQ)\|\nabla\Delta Q\|^2_{L^2}
+\delta \|Au\|_{L^2}^2
\,.
\end{align}

\medskip

\noindent
\textit{Step 7: The cancellation property and the estimate for $\mathcal{A}$ .}
This is the most important part in the uniform estimate, which shows that the highest order terms in $\mathcal{A}$ have the cancellation property and the remaining parts can be controlled. In order to show this, we shall integrate by parts several times and third order derivatives of $u$, like $\Delta \nabla u$, might appear, while in the first step we only show  $H^2$-regularity for $u$. However, noticing that $\mathcal{A}$ depends on $u$ (along with its derivatives) linearly, by a standard density argument, we can assume that $\Delta\nabla u\in L^2(\O)$ for almost every $t$. The point is, in the final form, $\mathcal{A}$ only contains $u$ and its derivatives up to order 2.
We start by estimating  $\mathcal{A}$ defined in~\eqref{defA}.
By~\eqref{tensor4}
\begin{equation*}
\mathcal{A}
=-2\int_{\O}\diverg\sigma( \tildeQ,  Q)\cdot \tilde{A} u
+ \int_{\O}
\sigma\barg{ \tildeQ,{ \nu(\tildeQ )\Delta Q }  } :\nabla u
=\mathcal{A}_1 + \mathcal{A}_2 \,.
\end{equation*}
For $\mathcal{A}_1$,  we integrate by parts, employ the boundary
conditions for $Q$ in~\eqref{approstrong},
use $2\diverg\Lsym{u} = \Delta u$ and infer
\begin{equation*}
\begin{split}
   \mathcal{A}_1
&=-\int_{\O}\diverg\sigma( \tildeQ,  Q)\cdot 2\tilde{A} u
=\int_{\O}\diverg\sigma( \tildeQ,  Q) \cdot
\barg{ 2\Lsym{u} \nabla(\nu(\tildeQ)) + \nu(\tildeQ) \Delta u }\\
  &=\int_{\O}2\diverg\sigma( \tildeQ,  Q) \cdot
(\Lsym{u} \nabla(\nu(\tildeQ)))
-\int_{\O}\sigma(\tildeQ,  Q) :
\barg{
\Delta u \otimes \nabla(\nu(\tildeQ))}
\\ &\quad -\int_{\O}\sigma(\tildeQ,  Q) :
\barg{ \nu(\tildeQ) \Delta \nabla u} 
=\mathcal{I}_1+\mathcal{I}_2+\mathcal{I}_3\,.
\end{split}
\end{equation*}
For $\mathcal{A}_2$ we note that by~\bref{linalg} for all $A$, $B$, $C\in \R^{d\times d}$
with $A^T=A$ the identity $$(AB-BA):C = (AC-CA):B$$ holds.
We integrate by parts twice and discover
\begin{equation*}
  \begin{split}
     \mathcal{A}_2 &  =
 \int_{\O}\nabla u :
\barg{ \tildeQ\Delta
\barg{ \nu(\tildeQ )\Delta Q }- \Delta\barg{ \nu(\tildeQ)\Delta Q }
\tildeQ } \\
&= \int_{\O}\nu(\tildeQ )
\Delta (\tildeQ\nabla u) : \Delta Q
-\nu(\tildeQ) \Delta (\nabla u\tildeQ) : \Delta Q\\
&=\int_{\O} \nu(\tildeQ )
\barg{ \tildeQ\Delta \nabla u - \Delta \nabla u\tildeQ} : \Delta Q
+\int_{\O} \nu(\tildeQ )
\barg{ \Delta \tildeQ\nabla u - \nabla u\Delta \tildeQ}: \Delta Q \\
&\quad +2\int_{\O}\nu(\tildeQ)\nabla \partial_\beta u_{\alpha}
\cdot\barg{ \nabla(\tildeQ)_{\alpha\gamma}\Delta Q_{\gamma\beta}-
   \nabla (\tildeQ)_{\gamma\beta}  \Delta Q_{\alpha\gamma} }=\mathcal{I}_4
+\mathcal{I}_5+\mathcal{I}_6\,.
  \end{split}
\end{equation*}
Notice that $\mathcal{I}_3+\mathcal{I}_4=0$ and that we can rewrite
$\mathcal{A}$ as
\begin{align*}
\mathcal{A}
&  = \int_{\O}\diverg\sigma( \tildeQ, Q) \cdot
(2\Lsym{u} \nabla(\nu(\tildeQ)))
-\int_{\O}\sigma(\tildeQ, Q) :
\barg{ \Delta u \otimes \nabla(\nu(\tildeQ)) } \\
& + \int_{\O} \nu(\tildeQ )\nabla u :
\barg{\Delta \tildeQ\Delta Q -\Delta Q\Delta \tildeQ} \\
&+2\int_{\O}\nu(\tildeQ)\nabla \partial_\beta u_{\alpha}
\cdot\barg{ \nabla(\tildeQ)_{\alpha\gamma}\Delta Q_{\gamma\beta}-
   \nabla (\tildeQ)_{\gamma\beta}  \Delta Q_{\alpha\gamma} }
=\mathcal{I}_1+\mathcal{I}_2+\mathcal{I}_5+\mathcal{I}_6\,.
\end{align*}
We prove in Appendix~\ref{appendixB} that
 \begin{equation}\label{total2}
  \mathcal{A}\leq 8\delta
\barg{ \|Au\|_{L^2}^2+\|\nabla\Delta Q\|_{L^2} ^2 }
+C(\delta,\tildeQ)\barg{ \|\nabla u\|_{L^2}^2+\|\Delta Q\|_{L^2}^2 }\,.
\end{equation}

\medskip

\noindent
\textit{Step 8: Proof of~\bref{regularity}.}
We combine the higher order estimate \eqref{total} in Step~5 with  
\eqref{total1} and~\eqref{total2} and choose
$\delta$ independently of $\epsilon$ small enough to obtain
\begin{align*} 
&\,  \frac{d}{dt}\int_{\O}\nu(\tildeQ)\barg{2 |\Lsym{u} |^2+|\Delta Q|^2 }
+\int_{\O}|Au|^2
+\frac{3}{4}\,\int_{\O}\nu(\tildeQ)|\nabla \Delta Q|^2
+\frac{2}{5}\,\varepsilon\int_{\O}\nu(\tildeQ)|\Delta^2  Q|^2  \\
&\, \qquad \leq C(\tildeQ)
\barg{ \|F\|_{L^2}^2+\|G\|^2_{H^1}+\|\Delta Q\|^2_{L^2}+
  \barg{\|\nabla u\|_{L^2}^2+\|\Delta Q\|^2_{L^2} }
 }   +2\varepsilon C(\tildeQ)\|\nabla\Delta Q\|^2_{L^2}\,.
\end{align*}
Fix $\varepsilon_0>0$ small enough so that one can absorb the last term
on the right-hand side for all $\epsilon\in (0, \epsilon_0)$
 by the corresponding term on the left-hand side. Then
\begin{align*} 
&\,  \frac{d}{dt}\int_{\O}\nu(\tildeQ)\barg{ |\Lsym{u} |^2+|\Delta Q|^2 }
+\frac{1}{2C_{\mathcal{S}}}\, \int_{\O}|\nabla ^2u|^2
+\frac{1}{2}\,\int_{\O}|\nabla \Delta Q|^2\nu(\tildeQ)
    +\varepsilon\int_{\O}\nu(\tildeQ)|\Delta^2  Q|^2  \\
&\, \qquad   \leq C(\tildeQ)
\barg{ \|F\|_{L^2}^2+\|G\|^2_{H^1}
  +\|\Delta Q\|^2_{L^2}
  + \|\nabla u\|_{L^2}^2+\|\Delta Q\|^2_{L^2}  }\,.
\end{align*}
An application of Gronwall's
inequality 
establishes~\eqref{bound}. Note that the bound depends exponentially on the
norm of the initial data $(\tildeQ,u_0)$.
\end{proof}


\section{Well-Posedness for the Linearized System}
\label{linearsystem}
In this section we define a linearization of
the full system which leads to a bounded linear operator which is one-to-one
and onto between the function spaces defined in Section~\ref{prelim}.
For simplicity, we set $\lambda=\Gamma=a=b=c=1$ in~\eqref{strongformeqn}
and~\eqref{strongforcing}.

The linear operator $\mathcal{L}(\tildeQ):X_T\to Y_T\times Y_0$ is given by
\begin{equation}\label{linearization}
\begin{split}
  \mathcal{L}(\tildeQ)(u,Q)
=\left(
\begin{array}{cc}
\dt{u}-  P_{\sigma}\diverg \bsqb{ \barg{ \nu(\tildeQ)\Lsym{u} }
+ \sigma(\tildeQ,  Q)}\\[.1in]
\dt{Q}-\Delta Q-S(\nabla u, \tildeQ) \\[.1in]
(u,Q)|_{t=0}
\end{array}\right)\,.
\end{split}
\end{equation}
The main result in this section concerns the global existence and
boundedness of solutions.

\begin{prop}[Linearized system]\label{linearized}
The linear operator $\mathcal{L}$ defined by~\eqref{linearization}
is an isomorphism between the spaces $X_T$ and
$Y_T\times Y_0$ and
\begin{equation*}
 \|\mathcal{L}^{-1}(\tildeQ)\|_{\mathcal{L}(Y_T\times Y_0,X_T)}
\leq C_{\mathcal{L}}(\tildeQ)\,.
\end{equation*}
\end{prop}

\begin{proof} Fix $T>0$.
We need to show that for all $(F,G,u_0,Q_0)\in Y_T\times Y_0$ there
exists a unique pair $(u,Q)\in X_T$ such that
\begin{equation} \label{operatorequ}
   \mathcal{L}(\tildeQ)(u,Q)
= (F,G,u_0,Q_0)\in Y_T\times Y_0\,.
  \end{equation}
The idea is to invoke Proposition~\ref{approximate1},
to pass to the limit $\varepsilon\to 0$, and to make use
of the weak compactness given by the bounds which hold
uniformly in $\epsilon$.
More precisely, for each $\varepsilon\in (0, \epsilon_0)$
with $\epsilon_0$ as in Proposition~\ref{approximate1},
the system~\eqref{approstrong} has a unique solution
$(u_{\varepsilon},Q_{\varepsilon})$
%
%
%
and~\eqref{bound} implies the bounds
  \begin{align}\label{newbound}
\begin{split}
&   \|u_{\varepsilon}\|_{L^{\infty}(0,T;H^1)}
+\|Q_{\varepsilon}\|_{L^{\infty}(0,T;H^2)}
+\|u_{\varepsilon}\|_{L^2(0,T;H^2)}
+\|Q_{\varepsilon}\|_{L^2(0,T;H^3)}\\
&\qquad + \sqrt\varepsilon\|\Delta^2 Q_{\varepsilon}\|_{L^2(\O_T)}  \leq C( \tildeQ)\|(F,G,u_0,Q_0)\|_{Y_T\times Y_0}\,.
 \end{split}
\end{align}
 Moreover,  by~\eqref{important}, 
 we estimate
\begin{equation}\label{eq:SEstim}
\|S(\nabla u_{\varepsilon},\tildeQ)\|_{L^2(0,T;H^1)}\leq C(\tildeQ)\|u_{\varepsilon}\|_{L^2(0,T;H^2)}\,.
\end{equation}
Hence the equation for $Q_\varepsilon$ implies that $\dt{ Q_\varepsilon}$ is uniformly bounded in $L^2(0,T;L^2(\Omega;\mathbb{S}_0))$ since $\sqrt{\varepsilon}\Delta^2 Q$ is bounded in $L^2(0,T;L^2(\Omega;\mathbb{S}_0))$. Similarly, since the right-hand side of
 \begin{equation*}
 \dt{u}_\varepsilon-P_\sigma\div\barg{ \nu(\tildeQ)\Lsym{u_\varepsilon} }
 =P_\sigma\diverg(\tildeQ\Delta Q_\varepsilon-\Delta Q_\varepsilon\tildeQ)+P_\sigma F
 \end{equation*}
is bounded $L^2(0,T;\H)$, $\dt{u_\varepsilon}$ is bounded in $L^2(0,T;\H)$.

By weak compactness, there exists a subsequence $\epsilon_k\to 0$ for
$k\to\infty$, corresponding pairs
$(u_k, Q_k)=(u_{\epsilon_k},Q_{\epsilon_k})$ and an element $(u,Q)$  such that
 \begin{equation}\label{weakcon}
\begin{alignedat}{3}
u_{k} &\weakly u\quad && \text{in}\quad&& L^2(0,T;H^1_{0,\sigma}(\O)\cap H^2(\O;\R^d))\cap H^1(0,T;\H)\,,\\
u_{k} &\weaklystar u &&\text{in} &&  L^{\infty}(0,T;H^1(\O))\,,\\
Q_{k} &\weakly   Q &&\text{in} &&L^2(0,T;H^1_0(\O;\mathbb{S}_0)\cap H^3(\O;\mathbb{S}_0))\cap H^1(0,T;L^2(\O;\mathbb{S}_0))\,,\\
Q_{k} &\weaklystar   Q &&\text{in}&& L^{\infty}(0,T; H^2(\O;\mathbb{S}_0))\,,
\end{alignedat}
\end{equation}
where $\Delta Q|_{\partial \Omega} =0$ and $(u,Q)|_{t=0} = (u_0,Q_0)$ in $\H\times H^1(\Omega;\mathbb{S}_0)$ since the corresponding trace maps are linear and bounded.
By  weak sequential lower semi-continuity  of norms
it follows
from~\eqref{weakcon} and~\eqref{newbound} that
\begin{align}\label{newbound1}
\begin{split}
& \|u\|^2_{L^{\infty}(0,T;H^1(\O))}+\|Q\|^2_{L^{\infty}(0,T;H^2(\O))}
           +\|u\|^2_{L^2(0,T;H^2(\O))}+\|Q\|^2_{L^2(0,T;H^3(\O))}\\
&\qquad   \leq C( \tildeQ)\|(F,G,u_0,Q_0)\|^2_{Y_T\times Y_0}\,.
 \end{split}
\end{align}
It remains to improve the regularity of the time derivative of $Q$
along with the corresponding estimates
and to show that the weak limit $(u,Q)$ is the unique solution
of~\eqref{operatorequ}.
We first consider the third equation in~\eqref{approstrong},
 \begin{equation}\label{weakq}
\dt{ Q_{k}} +
\epsilon_k\Delta^2 Q_{k}
-\Delta Q_{k}-S(\nabla u_{k},\tildeQ)=G.
\end{equation}
In view of~\eqref{weakcon} and  $\epsilon_k\Delta^2Q_k\to 0 $ in $L^2(0,T;L^2(\Omega;\mathbb{S}_0))$ as $k\to\infty$, we may pass to the limit $k\to\infty$ in~\eqref{weakq}
and infer
\begin{equation}\label{weakq1}
   \partial_t Q\
=\Delta Q+S(\nabla u,\tildeQ)+G.
 \end{equation}
Since the right-hand side belongs to $ L^2(0,T;H^1_0(\mathbb{S}_0))$, we obtain
$\dt{Q}\in L^2(0,T;H^1_0(\mathbb{S}_0))$.
Moreover, we can use the equation and \eqref{eq:SEstim} to estimate $\dt{Q}$ and conclude
 \begin{equation}\label{timeder}
   \|Q\|_{X_{T;Q}}
   \leq C(\tildeQ)
\barg{ \|Q_0\|_{H^2}+\|u\|_{L^2(0,T;H^2)}+\|G\|_{L^2(0,T;H^1)} }.
\end{equation}
 Moreover, we may pass to the limit $k\to \infty$ in the first equation
 of~\eqref{approstrong} such that
 \begin{equation}\label{sigle}
 \dt{u}-P_\sigma\div\barg{ \nu(\tildeQ)\Lsym{u} }
 =P_\sigma\diverg(\tildeQ\Delta Q-\Delta Q\tildeQ)+P_\sigma F
 \end{equation}
holds in $L^2(0,T;L^2_\sigma(\Omega))$.
 Notice that the
 right-hand side of~\eqref{sigle} belongs to $L^2(0,T;\H)$.
Hence by Lemma~\ref{stok}
\begin{equation*}
    \|u\|_{X_{T;u}}\leq C( \tildeQ)
\barg{ \|u_0\|_{H^1_{0,\sigma}}
+\|\diverg(\tildeQ\Delta Q-\Delta Q\tildeQ)+F\|_{L^2(\ts{T})} }\,.
\end{equation*}
By~\eqref{important} 
one obtains 
\begin{equation*}
  \|\diverg(\tildeQ\Delta Q-\Delta Q\tildeQ)\|_{L^2(0,T;L^2_{\sigma})}
\leq C( \tildeQ)\|Q\|_{L^2(0,T;H^3)}\,.
\end{equation*}
The foregoing two estimates give
\begin{equation}\label{timeder2}
   \|u\|_{X_{T;u}}
    \leq C( \tildeQ)\barg{ \|u_0\|_{H^1_{0,\sigma}(\O)}+\|Q\|_{L^2(0,T;H^3)}+
    \|F\|_{L^2(\ts{T})} }.
\end{equation}
The combination of~\eqref{timeder},~\eqref{timeder2} and~\eqref{newbound1}
implies the stated boundedness of $\mathcal{L}^{-1}(Q_0)$.
\end{proof}


\section{Properties of the nonlinear operator}
\label{nonlinearoperator}
For a given pair $(u_0, Q_0)\in Y_0$
we define a nonlinear operator $\mathcal{N}:X_T\to Y_T\times Y_0$ by
\begin{equation}\label{nonlinear}
\begin{split}
  &\mathcal{N}(\tildeQ) (u,Q)=
 \\& \
\left(\begin{array}{cc}
P_{\sigma}\diverg
\left[ \barg{ (\nu(Q)-\nu(\tildeQ))\Lsym{u} } +
 \barg{ \tau(Q)+\sigma(Q-\tildeQ,  Q)-u\otimes u }\right]\\
-(u\cdot \nabla) Q +S(\nabla u,Q-\tildeQ) + L(Q)\\
(u_0,Q_0)
\end{array}\right)\,.
\end{split}
\end{equation}

The important properties of this nonlinear operator are formulated
 in the subsequent proposition. First we define a constant $M$ which shall be used later.
 For any $Q_0\in H^2(\O)$, let $\tilde{Q}(t,x)$ be the solution of the heat equation with initial data $Q_0$ and homogemenous Dirichlet boundary condition. Then we define $M>0$ to be such that
 \begin{equation}\label{heattro}
   \|(0,\tilde{Q})\|_{X_T}\leq M\|Q_0\|_{H^2(\O)}\, .
 \end{equation}

\begin{prop}
\label{nonlinearterms}
Suppose that $(u_0,Q_0)\in Y_0$, let $T\in (0,1]$ and
 $R>M\|Q_0\|_{H^2}$.
Then there is a constant
$C_{\mathcal{N}}(T,R)>0$ with the
following properties:
\begin{itemize}
 \item [(i)] $\mathcal{N}$ is bounded on $\BRXT$ subject to the given initial data, i.e., for all
$(u,Q)\in \BRXT$ with $Q|_{t=0}=Q_0$ one has the estimate
\begin{equation*}
\|\mathcal{N}(\tildeQ)(u,Q)\|_{Y_T\times Y_0}
\leq C_{\mathcal{N}}(T,R)\(R+M\|Q_0\|_{X_T}\)+ C\|(u_0,Q_0)\|_{Y_0}\,;
\end{equation*}
 \item [(ii)] $\mathcal{N}$ is Lipschitz continuous on $\BRXT$
 subject to the given initial data,
i.e., for
all $(u_1, Q_1)$, $(u_2, Q_2)\in \BRXT$ with $Q_i|_{t=0}=Q_0$, $i=1,2$,
one has the estimate
\begin{equation*}
\|\mathcal{N}(\tildeQ)(u_1,Q_1)-\mathcal{N}(\tildeQ)(u_2,Q_2)\|_{Y_T\times Y_0}
\leq C_{\mathcal{N}}(T,R)\|(u_1-u_2,Q_1-Q_2)\|_{X_T}\,;
\end{equation*}
 \item [(iii)] for all $R>0$ the map
$C_{\mathcal{N}}(\cdot,R):(0,\infty)\to [0,\infty)$
satisfies
\begin{align*}
 \lim_{T\to 0}C_{\mathcal{N}}(T,R)= 0\,.
\end{align*}
\end{itemize}
\end{prop}

\begin{proof}
We begin with the proof of the Lipschitz continuity
since it is the most demanding part of the proof. From this we easily get the
boundedness and the properties of the constant in the a~priori estimates.

\medskip

\noindent
\textit{Proof of (ii).} We divide the proof into several steps. In the
first step, we deduce estimates on the distance between various trajectories
of director fields. In the subsequent steps, we estimate all the terms in the
expression $\mathcal{N}(\tildeQ)(u_1,Q_1)-\mathcal{N}(\tildeQ)(u_2,Q_2)$
which need to be controlled for the Lipschitz continuity.

\medskip

\noindent
\textit{Step 1: Estimates for trajectories.}
Let $(\overline{u},\overline{Q})=(u_1-u_2,Q_1-Q_2)$.
By the fundamental theorem of calculus
  \begin{equation*}
    Q_1(t,x)-\tildeQ(x)
=\int^t_0 \barg{ \dt{Q_1}(\tau,x)}
\dv{\tau} + \barg{ Q_1(0,x) - \tildeQ(x) }\quad \text{for } t\in (0,T).
  \end{equation*}
We infer by H\"{o}lder's inequality that
\begin{align*}
  \|Q_1(t,\cdot)-\tildeQ\|_{H^1(\O)}
  &\, \leq \Barg{
    \int_\Omega t \int_0^t \bbar{ \dt{Q_{1}}(\tau,x)} ^2
\dv{\tau}\dv{x} }^{1/2}
%
 \leq  t^{\frac 12}\|\dt{Q_{1}}\|_{L^2(0,t;H^1)}
\,.
\end{align*}
The assumption $Q_1\in \BRXT$ 
implies
\begin{align}\label{f1}
\begin{split}
\|Q_1-\tildeQ\|_{L^{\infty}(0,T;H^1)}
 &\, \leq   T^{\frac 12}(\|\dt{Q_{1}}\|_{L^2(0,T;H^1)}
 \leq T^{\frac 12}R
 \end{split}
\end{align}
One obtains analogously that
     \begin{equation}\label{f2}
    \|Q_2-\tildeQ\|_{L^{\infty}(0,T;H^1)}
           \leq   T^{\frac 12}R
    \end{equation}
and
    \begin{equation}\label{f11}
  \|\overline{Q}\|_{L^{\infty}(0,T;H^1)}
= \|Q_1-Q_2\|_{L^{\infty}(0,T;H^1)}
\leq   T^{\frac 12}\|\dt{\overline{Q}}\|_{L^2(0,T;H^1)}
\leq T^{\frac 12}\|\overline{Q}\|_{X_{T;Q}}\,.
    \end{equation}
Moreover, we note that \eqref{interpo2} implies
    \begin{equation*}
  \|\overline{Q}\|_{L^{\infty}(0,T;H^2)}
\leq C \|\overline{Q}\|_{X_{T;Q}}\,
    \end{equation*}
with a constant $C>0$ independent of $T>0$.
This estimate will be used frequently in the following.

\medskip

\noindent
\textit{Step 2: Estimates for viscous stresses.}
The following estimate holds for suitable constants,
\begin{align*}
&\bnorm{ P_{\sigma}
\barg{ \diverg\barg{ (\nu(Q_1)-\nu(\tildeQ))\Lsym{u_1} }
-\diverg\barg{ (\nu(Q_2)-\nu(\tildeQ))\Lsym{u_2} } } }_{Y_{T;u}} \\
&\,\qquad
\leq C(R)T^{\frac 14}\left(\|\overline{Q}\| _{X_{T;Q}}
+\|  \overline{u}\|_{X_{T;u}}\right)
+
C(R)T^{\frac 12}\|(\overline{u},\overline{Q})\|_{X_T}\,.
\end{align*}
For the proof of this estimate,
 note that $P_\sigma$ does not increase the $L^2$-norm and that
for $i=1,2$,
\begin{equation*}
  \div\barg{ (\nu(Q_i)-\nu(\tildeQ))\Lsym{u_i} }
=\frac12\barg{ \nu(Q_i)-\nu(\tildeQ) }\Delta u_i
+\nabla\barg{ \nu(Q_i)-\nu(\tildeQ) }\Lsym{u_i} \,.
\end{equation*}
In view of this decomposition it suffices to prove the estimates
\begin{equation}\label{g4}
   \bnorm{ 
(\nu(Q_1)-\nu(\tildeQ))\Delta u_1
-(\nu(Q_2)-\nu(\tildeQ))\Delta u_2 
}_{L^2(\ts{T})}
\leq C(R)T^{\frac 14}\|(\overline{u},\overline{Q})\|_{X_{T}}
\end{equation}
and
\begin{equation}\label{part0}
  \bnorm{ \nabla\barg{ \nu(Q_1)-\nu(\tildeQ) }\Lsym{u_1}
-\nabla\barg{ \nu(Q_2)-\nu(\tildeQ) }\Lsym{u_2}  }_{L^2(\ts{T})}
\leq C(R)T^{\frac 12}\|(\overline{u},\overline{Q})\|_{X_T}\,.
\end{equation}
To prove~\bref{g4}, note that
\begin{equation*}
  \begin{split}
   &\| (\nu(Q_1)-\nu(\tildeQ))\Delta u_1
-(\nu(Q_2)-\nu(\tildeQ))\Delta u_2\|_{L^2(\ts{T})}\\
   &\quad = \|(\nu(Q_1)-\nu(Q_2))\Delta u_1
+(\nu(Q_2)-\nu(\tildeQ))\Delta\overline{u}\|_{L^2(\ts{T})}\\
&\quad \leq \|\nu(Q_1)-\nu(Q_2)\|_{L^{\infty}(\ts{T})}\|\Delta u_1\|_{L^2(\ts{T})}
+\|(\nu(Q_2)-\nu(\tildeQ))\|_{L^{\infty}(\ts{T})}\|\Delta \overline{u}\|_{L^2(\ts{T})}\\
&\quad \leq \|\nu(Q_1)-\nu(Q_2)\|_{L^{\infty}(\ts{T})}R+
\|\nu(Q_2)-\nu(\tildeQ)\|_{L^{\infty}(\ts{T})}\|  \overline{u}\|_{X_{T;u}}\,.
  \end{split}
\end{equation*}
In the last step, we used $(u_i,Q_i)\in \BRXT$, $i=1,2$. This fact also implies
\begin{equation}\label{infty}
 \|u_i\|_{C([0,T];H^1)}
+ \|Q_i\|_{C([0,T];H^2)}
+\|Q_i\|_{L^{\infty}(\ts{T})}\leq C(R)\,,\quad i=1,\,2\,.
\end{equation}
With~\eqref{boundvis} and~\eqref{f11} we conclude
\begin{align*}
\begin{split}
  \|\nu(Q_1)-\nu(Q_2)\|_{L^{\infty}(\ts{T})}
&\,\leq C(R)\|Q_1-Q_2\|_{L^{\infty}(\ts{T})}
\leq C(R)\|\overline{Q}\|_{L^{\infty}(0,T;H^1)}^{\frac 12}
\|\overline{Q}\|_{L^{\infty}(0,T;H^2)}^{\frac 12} \\
&\,
\leq C(R)T^{\frac 14}\|\overline{Q}\|^{\frac 12}_{X_{T;Q}}
\|\overline{Q}\|_{X_{T;Q}}^{\frac 12}
\leq  C(R)T^{\frac 14}\|\overline{Q}\| _{X_{T;Q}}\, .
\end{split}
\end{align*}
Based on~\eqref{f2},~\eqref{infty} and~\eqref{boundvis},
we can estimate $\nu(Q_2)-\nu(\tildeQ)$
in a similar way,
\begin{align*}\label{g2}
   \|\nu(Q_2)-\nu(\tildeQ)\|_{L^{\infty}(\ts{T})}
   & \leq C(R)\|Q_2-\tildeQ\|_{L^{\infty}(0,T;H^1)}^{\frac 12}
 \|Q_2-\tildeQ\|_{L^{\infty}(0,T;H^2)}^{\frac 12}\\
   &\leq C(R)T^{\frac 14}\,.
\end{align*}
With these estimate we obtain immediately~\bref{g4}.

To verify the estimate~\bref{part0}, we note that
\begin{equation*}
  \nu(Q_1)-\nu(Q_2)
=\int^1_0\frac{\rm d}{\rm d\tau}\nu\barg{ \tau Q_1 + (1-\tau) Q_2}\dv{\tau}
=\int^1_0 D \nu\barg{\tau Q_1 + (1-\tau) Q_2 }( \bar{Q})\dv{\tau}\,.
\end{equation*}
As a result,
\begin{align}\label{vis2}
\begin{split}
\frac{\partial}{\partial x_i}  \barg{ \nu(Q_1)-\nu(Q_2) }
&=\int^1_0  \nabla^2\nu
\barg{\tau Q_1 + (1-\tau) Q_2 }\left(\bar{Q} ,
\frac{\partial}{\partial x_i}\barg{\tau Q_1 + (1-\tau) Q_2 }\right) \dv{\tau}\\
&\quad
+\int^1_0 \nabla \nu\barg{\tau Q_1 + (1-\tau) Q_2 }\left(
\frac{\partial}{\partial x_i}\bar{Q}\right)
\dv{\tau}\,.
\end{split}
\end{align}
Now we combine~\eqref{boundvis},~\eqref{infty} and~\eqref{vis2}, and deduce,
for any $p>1$,
\begin{equation}\label{com}
  \begin{split}
    \|\nabla\barg{ \nu(Q_1)-\nu(Q_2) }\|_{L^p(\O)}
&\leq C(R)\bnorm{\, (|\nabla\bar{Q}|+|\nabla Q_2|)\, |\bar{Q}| \,}_{L^p(\O)}
+C(R)\|\nabla\bar{Q}\|_{L^p(\O)}\\
    &\leq C(R)\barg{ \|\bar{Q}\|_{L^{\infty}}+1 }
\|\nabla\bar{Q}\|_{L^p(\O)}
+ C(R)\|\,|\nabla Q_2|\, |\bar{Q}| \,\|_{L^p(\O)}\,.
  \end{split}
\end{equation}
Using~\eqref{com} with $p=6$, H\"{o}lder's inequality, Sobolev embedding,
\eqref{infty} and~\eqref{inter3}, we can estimate by
\begin{equation*}
 \begin{split}
    &\bnorm{ \nabla\barg{ \nu(Q_1)-\nu(\tildeQ) }\Lsym{u_1}
-\nabla\barg{ \nu(Q_2)-\nu(\tildeQ) }\Lsym{u_2} }_{L^2(\O)}\\
    &\quad
\leq \|\nabla\barg{ \nu(Q_1)-\nu(Q_2) }\Lsym{u_1} \|_{L^2(\O)}
+\|\nabla(\nu(Q_2))\Lsym{\bar{u}}\|_{L^2(\O)}
+\|\nabla(\nu(\tildeQ))\Lsym{\bar{u}}\|_{L^2(\O)}\\
    &\quad
 \leq C(R)\|\nabla\bar{Q}\|_{L^6(\O)}\|\Lsym{u_1} \|_{L^3(\O)}
+C(R) \|\,|\nabla Q_2|\, |\bar{Q}| \,\|_{L^6(\O)}\|\Lsym{u_1} \|_{L^3(\O)}\\
&\,\qquad +C(R)\barg{ \|\nabla Q_2\|_{L^6(\O)}\|\Lsym{\bar{u}}\|_{L^3(\O)}
+\|\Lsym{\bar{u}}\|_{L^2(\O)} }\\
    &\quad
 \leq C(R)\|\bar{Q}\|_{H^2(\O)}
\|\Lsym{u_1} \|^{\frac 12}_{L^2(\O)}\|\Lsym{u_1} \|^{\frac 12}_{H^1(\O)}
+C(R) \|\nabla Q_2\|_{L^6(\O)} \|  \bar{Q} \|_{L^\infty(\O)}
\|\Lsym{u_1} \|_{L^3(\O)}\\
&\,\qquad +C(R)\barg{ \|Q_2\|_{H^2(\O)}+1 }\|\Lsym{\bar{u}}\|^{\frac 12}_{L^2(\O)}
\|\Lsym{\bar{u}}\|^{\frac 12}_{H^1(\O)}\\
    &\quad
 \leq C(R)\|\bar{Q}\|_{L^\infty(0,T;H^2(\O))}\|\Lsym{u_1} \|^{\frac 12}_{H^1(\O)}
+C(R)\|\Lsym{\bar{u}}\|^{\frac 12}_{H^1(\O)}\\
&\,\qquad +C(R) \| \bar{Q} \|_{X_{T;Q}}
\|\Lsym{u_1} \|^{\frac 12}_{L^2(\O)}\|\Lsym{u_1} \|^{\frac 12}_{H^1(\O)}
\,.     \end{split}
\end{equation*}
Integration in time implies~\bref{part0}.

\medskip

\noindent
\textit{Step 3: The extra stress in the flow equation.} We have
\begin{align*}
&\|P_\sigma\barg{
\diverg\sigma(Q_1-\tildeQ,  Q_1)
-\diverg\sigma(Q_2-\tildeQ,  Q_2) } \|_{L^2(\O_T)}
 \leq CT^{\frac 14} (R+\|\tildeQ\|_{H^2}) \|\overline{Q}\|_{X_{T;Q}}\,.
\end{align*}
In fact, by~\eqref{important}: 
  \begin{equation}\label{long}
    \begin{split}
    &\|\diverg\sigma(Q_1-\tildeQ, Q_1)
-\diverg\sigma(Q_2-\tildeQ, Q_2)\|_{L^2}\\
      =&\|\diverg \big(\Delta Q_1(Q_1-\tildeQ)
-(Q_1-\tildeQ)\Delta Q_1\big)-
\diverg \big(\Delta Q_2(Q_2-\tildeQ)
-(Q_2-\tildeQ)\Delta Q_2\big)\|_{L^2}\\
      \leq&\|\Delta \overline{Q} (Q_1-\tildeQ)
+\Delta Q_2\overline{Q}-(Q_1-\tildeQ)\Delta \overline{Q}-
\overline{Q}\Delta Q_2\|_{H^1}\\
      \leq & C\|\Delta\overline{Q}\|_{H^1}
\|Q_1-\tildeQ\|_{H^1}^{\frac 12}\|Q_1-\tildeQ\|_{H^2}^{\frac 12}
+C\|\Delta Q_2\|_{H^1}\|\overline{Q}\|_{H^1}^{\frac 12}
\|\overline{Q}\|_{H^2}^{\frac 12}\,.
    \end{split}
  \end{equation}
  As a result:
    \begin{equation}\label{high}
    \begin{split}
      &\|\diverg \sigma(Q_1-\tildeQ, Q_2)-
\diverg \sigma(Q_2-\tildeQ, Q_2)\|_{L^2(\ts{T})}\\
\leq & C\|Q_1-\tildeQ\|_{L^{\infty}(0,T;H^1)}^{\frac 12}
\|Q_1-\tildeQ\|_{L^{\infty}(0,T;H^2)}^{\frac 12}
\|\overline{Q}\|_{L^2(0,T;H^3)}      \\
&+C\|\overline{Q}\|_{L^{\infty}(0,T;H^1)}^{\frac 12}
\|\overline{Q}\|_{L^{\infty}(0,T;H^2)}^{\frac 12}\|  Q_2\|_{L^2(0,T;H^3)}\,.
    \end{split}
  \end{equation}
Combining~\eqref{f1},~\eqref{f11} with~\eqref{high} gives:
\begin{equation}\label{part1}
\begin{split}
      &\|\diverg\sigma(Q_1-\tildeQ, Q_1)
-\diverg\sigma(Q_2-\tildeQ,Q_2)\|_{L^2(\ts{T})}
\leq   CT^{\frac 14} R \|\overline{Q}\|_{X_{T;Q}}\,,
\end{split}
\end{equation}
i.e., the assertion.

\medskip

\noindent
\textit{Step 4: The convection term in the Navier-Stokes equation.} We have
\begin{align*}
\|P_\sigma\barg{ u_1\cdot\nabla u_1-u_2\cdot\nabla u_2 } \|_{L^2(\O_T)}
\leq T^{\frac 18}C(R)\|\overline{u}\|_{X_{T;u}}\,.
\end{align*}
Indeed,
  \begin{equation*}
  \begin{split}
     & \|u_1\cdot\nabla u_1-u_2\cdot\nabla u_2\|_{L^2(\O)}\\
     \leq &C\|\overline{u}\|_{L^2(\O)}^{\frac 14}
\|\overline{u}\|_{H^1(\O)}^{\frac 34}
C\|u_1\|_{H^1(\O)}^{\frac 14}\|u_1\|_{H^2(\O)}^{\frac 34}
+C\|u_2\|_{L^2(\O)}^{\frac 14}\|u_2\|_{H^1(\O)}^{\frac 34}
C\|\overline{u}\|_{H^1(\O)}^{\frac 14}
\|\overline{u}\|_{H^2(\O)}^{\frac 34}\,.
  \end{split}
  \end{equation*}
  Therefore
    \begin{equation}\label{convection}
  \begin{split}
     & \|u_1\cdot\nabla u_1-u_2\cdot\nabla u_2\|_{L^2(\ts{T})}\\
     \leq & C(R)\|\overline{u}\|_{X_{T;u}}
\bnorm{ \|u_1\|_{H^2(\O)}^{\frac 34}(t) }_{L^2(0,T)}
+C(R)\|\overline{u}\|_{X_{T;u}}^{\frac 14}
\bnorm{ \|\overline{u}\|_{H^2(\O)}^{\frac 34}(t) }_{L^2(0,T)}\\
\leq & T^{\frac 18}C(R)\|\overline{u}\|_{X_{T;u}}\,.
  \end{split}
  \end{equation}

\medskip

\noindent
\textit{Step 5: The additional forces in the evolution of the director.} We have
\begin{align*}
    \|S(\nabla u_1,Q_1-\tildeQ)-S(\nabla u_2,Q_2-\tildeQ)\|_{L^2(0,T;H^1)}
\leq CT^{\frac 14}R\|(\overline{u},\overline{Q})\|_{X_T}\,.
\end{align*}
Note that
\begin{align*}
 S(\nabla u,Q-\tildeQ)
=\Lskew{u}(Q-\tildeQ)
-  (Q-\tildeQ)\Lskew{u}.
\end{align*}
By a similar argument as for~\eqref{long}, one obtains:
\begin{align*}
&   \|S(\nabla u_1,Q_1-\tildeQ)-S(\nabla u_2,Q_2-\tildeQ)\|_{H^1}\\
 &\quad \leq  \|\nabla u_1\|_{H^1}\|\overline{Q}\|_{H^1}^{\frac 12}
\|\overline{Q}\|^{\frac 12}_{H^2}
+\|\nabla \overline{u}\|_{H^1}\|Q_2-\tildeQ\|_{H^1}^{\frac 12}
\|Q_2-\tildeQ\|^{\frac 12}_{H^2}\,.
\end{align*}
The above estimate implies:
\begin{equation}\label{non1}
\begin{split}
&\|S(\nabla u_1,Q_1-\tildeQ)-S(\nabla u_2,Q_2-\tildeQ)\|_{L^2(0,T;H^1)}\\
&\leq 
\| u_1\|_{L^2(0,T;H^2)}\|\overline{Q}\|_{L^{\infty}(0,T;H^1)}^{\frac 12}
\|\overline{Q}\|^{\frac 12}_{L^{\infty}(0,T;H^2)}\\
&\quad +
\| \overline{u}\|_{L^2(0,T;H^2)}
\|Q_2-\tildeQ\|^{\frac 12}_{L^{\infty}(0,T;H^1)}
\|Q_2-\tildeQ\|^{\frac 12}_{L^{\infty}(0,T;H^2)}\,.
\end{split}
\end{equation}
Combining~\eqref{non1} with~\eqref{f2} and~\eqref{f11} implies
\begin{equation}\label{part2}
\begin{split}
&
\|S(\nabla u_1,Q_1-\tildeQ)-S(\nabla u_2,Q_2-\tildeQ)\|_{L^2(0,T;H^1)}\\
&\leq  
C(R)T^{\frac 14}\|\overline{Q}\|_{X_{T;Q}}^{\frac 12}
\|\overline{Q}\|_{X_{T;Q}}^{\frac 12}
+C\| \overline{u}\|_{X_{T;u}}T^{\frac 14}
R
\leq
C(R)T^{\frac 14}\|(\overline{u},\overline{Q})\|_{X_T}\,.
\end{split}
\end{equation}

\medskip

\noindent
\textit{Step 6: The Ericksen stress tensor.} We have
\begin{align*}
 \|P_\sigma\barg{ \diverg\tau(Q_1)-\diverg\tau(Q_2) } \|_{L^2(\O_T)}
\leq  C(R)T^{\frac 14}\| \overline{Q}\|_{X_{T;Q}}\,.
\end{align*}
With~\eqref{important} we find
\begin{align*}
\MoveEqLeft{\|\diverg\tau(Q_1)-\diverg\tau(Q_2)\|_{L^2(\O)}}\\
&\leq  \| \tau(Q_1)- \tau(Q_2)\|_{H^1(\O)}
=  \| \nabla\overline{Q}\odot (\nabla Q_1+\nabla Q_2)\|_{H^1(\O)}
\\
&\leq  C\| \nabla\overline{Q}\|_{H^1(\O)}^{\frac 12}
\| \nabla\overline{Q}\|_{H^2(\O)}^{\frac 12}
\left(\| \nabla Q_1\|_{H^1(\O)}+\|\nabla Q_2\|_{H^1(\O)}\right)\,.
\end{align*}
So we deduce from the foregoing estimate that:
\begin{equation}\label{part3}
\begin{split}
& \|\diverg\tau(Q_1)-\diverg\tau(Q_2)\|_{L^2(\ts{T})}\\
\leq &   C\| \overline{Q}\|^{\frac 12}_{L^{\infty}(0,T;H^2(\O))}
\left(\|  Q_1\|_{L^{\infty}(0,T;H^2(\O))}
+\| Q_2\|_{L^{\infty}(0,T;H^2(\O))}\right)
\bigg\|\|\overline{Q}\|_{H^3}^{\frac 12}(t)\bigg\|_{L^2(0,T)}\\
\leq  & C(R)\| \overline{Q}\|^{\frac 12}_{X_{T;Q}}T^{\frac 14}
\|\overline{Q}\|_{L^2(0,T;H^3)}^{\frac 12}
\leq     C(R)T^{\frac 14}\| \overline{Q}\|_{X_{T;Q}}\,.
\end{split}
\end{equation}

\medskip

\noindent
\textit{Step 7: The convection term in the equation for the $Q$-tensor.} We have
\begin{align*}
 \|u_1\cdot\nabla Q_1-u_2\cdot\nabla Q_2\|_{H^1(\O)}
\leq C(R)T^{\frac 14}\|(\overline{u},\overline{Q})\|_{X_T}\,.
\end{align*}
In view of~\eqref{important} we find
\begin{multline}
  \|u_1\cdot\nabla Q_1-u_2\cdot\nabla Q_2\|_{H^1(\O)}\\
  \leq C\|\overline{u}\|_{H^1(\O)}\|\nabla Q_1\|^{\frac 12}_{H^1(\O)}
\|\nabla Q_1\|_{H^2(\O)}^{\frac 12}
+C\|u_2\|_{H^1(\O)}^{\frac 12}\|u_2\|_{H^2(\O)}^{\frac 12}
\|\nabla \overline{Q}\|_{H^1(\O)}\,.
\end{multline}
The above estimates imply:
\begin{equation}\label{part4}
\begin{split}
&\|u_1\cdot\nabla Q_1-u_2\cdot\nabla Q_2\|_{L^2(0,T;H^1)}\\
\leq &
C\|\overline{u}\|_{L^{\infty}(0,T;H^1)}
\| Q_1\|^{\frac 12}_{L^{\infty}(0,T;H^2)}
\bigg\|\|\nabla Q_1(t)\|_{H^2}^{\frac 12}\bigg\|_{L^2(0,T)}\\
&
+C\|u_2\|_{L^{\infty}(0,T;H^1)}^{\frac 12}
\bigg\|\|u_2(t)\|_{H^2}^{\frac 12}\bigg\|_{L^2(0,T)}
\| \overline{Q}\|_{L^{\infty}(0,T;H^1)}\\
\leq &
C(R)\|\overline{u}\|_{X_{T;u}}T^{\frac 14}
\|  Q_1\|_{L^2(0,T;H^3)}^{\frac 12}
+C(R)T^{\frac 14}\|u_2\|_{L^2(0,T;H^2)}^{\frac 12}
\| \overline{Q}\|_{X_{T;Q}}\\
\leq & C(R)T^{\frac 14}\|(\overline{u},\overline{Q})\|_{X_T}\,.
\end{split}
\end{equation}

\medskip

\noindent
\textit{Step 8: The lower-order terms.} One can show
\begin{equation}\label{part5}
  \|L(Q_1)-L(Q_2)\|_{L^2(0,T;H^1)}\leq T^{\frac12}\|L(Q_1)-L(Q_2)\|_{L^\infty(0,T;H^1)}
\leq T^{\frac 12}C(R)\|\overline{Q}\|_{X_{T;Q}}
\end{equation}
in a straight forward manner.
Combining~\eqref{g4},~\eqref{part0},~\eqref{part1},~\eqref{convection},
\eqref{part2},~\eqref{part3},~\eqref{part4} and~\eqref{part5}
implies the Lipschitz continuity, as asserted.

\emph{Proof (i) and (iii):} From the estimates above one easily verifies (iii).
Finally, let $\tilde{Q}(t,x)$ be the trajectory defined by \eqref{heattro}. Then it is clear that $(u,Q)$ and $(0,\tilde{Q})$ satisfy the conditions for (ii) and we obtain
\begin{equation*}
  \begin{split}
    &\|\mathcal{N}(\tildeQ)(u,Q)\|_{Y_T\times Y_0}\\
&\leq  \|\mathcal{N}(\tildeQ)(u,Q)-\mathcal{N}(\tildeQ)(0,\tilde{Q})\|_{Y_T\times Y_0}+\|\mathcal{N}(\tildeQ)(0,\tilde{Q})\|_{Y_T\times Y_0}\\
&\leq C_{\mathcal{N}}(T,R)\|(u,Q-\tilde{Q})\|_{X_T}+ C\|(u_0,Q_0)\|_{Y_0}\\
&\leq C_{\mathcal{N}}(T,R)\(R+M\|Q_0\|_{X_T}\)+ C\|(u_0,Q_0)\|_{Y_0}.
  \end{split}
\end{equation*}

\end{proof}

\section{Proof of Theorem~\ref{maintheorem}}
\label{maintheoremproof}

Based on the properties of the linear operator in  Proposition~\ref{linearized}
and of the nonlinear operator in~Proposition~\ref{nonlinearterms},
we can give the proof of Theorem~\ref{maintheorem} which we divide into several steps.

\medskip

\noindent
\textit{Step 1: Reformulation as a fixed-point theorem.}
Note that $(u,Q)$ is a strong solution
of the system with given initial and boundary conditions if and only if 
\begin{equation*}
  \mathcal{L}(\tildeQ) (u,Q)
=\mathcal{N}(\tildeQ) (u,Q)\quad \text{ with }(u,Q)\in X_T\,.
\end{equation*}
In view of the results in Proposition~\ref{nonlinearterms} the mapping
$\mathcal{N}$ is a contraction if the initial values of the director field $Q$
are fixed and $T$ is small enough. This motivates the definition
\begin{align*}
 X_{T,0} = \bset{ (u,Q)\in X_T,\,Q|_{t=0}=Q_0 }\,.
\end{align*}
%
It follows that the existence of a solution is equivalent to the
assertion that the nonlinear mapping
\begin{equation*}
  \mathscr{L}(\tildeQ)=\mathcal{L}(\tildeQ)^{-1}\mathcal{N}(\tildeQ)
:X_{T,0}\to X_{T,0}
\end{equation*}
has a fixed-point. We prove the existence of such a fixed-point with Banach's fixed-point theorem applied to the closed ball $\BRXTzero$ with
\begin{align}\label{largeR}
 R >3C_{\mathcal{L}}(Q_0) \|(u_0,Q_0)\|_{Y_0}\,.
\end{align}

\medskip

\noindent
\textit{Step 2: $\mathscr{L}(\tildeQ)$ is a contraction for $T$ small enough.}
We use the estimates in
Propositions~\ref{linearized} and~\ref{nonlinearterms}. For any pair
$(u_i,Q_i)\in \BRXTzero$, $i=1, \, 2$, we obtain
\begin{equation*}
  \begin{split}
     &\|\mathcal{L}^{-1}(\tildeQ)\mathcal{N}(\tildeQ)(u_1,Q_1)-
     \mathcal{L}^{-1}(\tildeQ)\mathcal{N}(\tildeQ)(u_2,Q_2)\|_{X_T}
\\ &\qquad
\leq C_{\mathcal{L}}(Q_0)\|\mathcal{N}(\tildeQ)(u_1,Q_1)
-\mathcal{N}(\tildeQ)(u_2,Q_2)\|_{Y_T}
\\&\qquad
\leq  C_{\mathcal{L}}(Q_0)C_{\mathcal{N}}(T,R)\|(u_1-u_2,Q_1-Q_2)\|_{X_T}\,.
  \end{split}
\end{equation*}
By Proposition~\ref{nonlinearterms} (iii) we conclude that
$\mathscr{L}(\tildeQ)$ is a contraction for $T>0$ sufficiently small.

\medskip

\noindent
\textit{Step 3: $\mathscr{L}(\tildeQ)$ is a self-map for $T$ small enough.}
We infer from the bounds in Propositions~\ref{linearized} and~\ref{nonlinearterms}
and from~\bref{largeR} that for $(u,Q)\in \BRXTzero$ and $T>0$ sufficiently
small so that
\begin{align*}
 R>3C_{\mathcal{L}}(Q_0)\|(u_0,Q_0)\|_{Y_0}+M\|Q_0\|_{X_T}
\end{align*}
the estimate
\begin{align*}
\|\mathscr{L}(\tildeQ)(u,Q)\|_{X_T}
&\leq C_{\mathcal{L}}(Q_0)\|\mathcal{N}(\tildeQ)(u,Q)\|_{Y_T\times Y_0}\\
&\leq C_{\mathcal{L}}(Q_0)C_{\mathcal{N}}(T,R)\(R+M\|Q_0\|_{X_T}\)+ C_{\mathcal{L}}(Q_0,\O)\|(u_0,Q_0)\|_{Y_0}\\
&\leq 2C_{\mathcal{L}}(Q_0)C_{\mathcal{N}}(T,R)R+ \frac R3\leq R \qquad \text{for } T \text{ sufficiently small}
\end{align*}
follows.

\medskip

\noindent
\textit{Step 4: Conclusion of the proof.}
Choosing $T>0$ in the first three steps sufficiently small we obtain the assertion of the theorem
from Banach's fixed-point theorem applied to the complete metric space
$\BRXTzero$
with $R>0$ as in~\bref{largeR} and $d(z_1, z_2)= \| z_1 - z_2 \|_{X_T}$
for all $z_1$, $z_2\in \BRXTzero$. \qed
\section{Global Estimates}\label{sec:Global}
The solution of \eqref{strongformeqn} satisfies the following energy dissipation law,
\begin{equation}\label{dissipation}
  \mathcal{E}(t)+\int_0^t\mathcal{B}(\tau)d\tau\leq \mathcal{E}(0)\leq C_{\mathcal{E}}(\|u_0\|_{L^2(\O)},\|Q_0\|_{H^1(\O)})
\quad \forall t\in (0,T)\,,
\end{equation}
where $\mathcal{E}$ and $\mathcal{B}$ are given by
\begin{equation*}
  \mathcal{E}(t)=\frac 12\|u(t)\|_{L^2(\O)}^2 +\mathcal{F}(Q)
\end{equation*}
and
\begin{equation*}
  \mathcal{B}(t)=\|\nabla u(\cdot,t)\|^2_{L^2(\O)}+\|H(Q(\cdot,t))\|_{L^2(\O)}^2,
\end{equation*}
respectively,
and $\mathcal{F}$ is defined by \eqref{laudau}. 
The proof of \eqref{dissipation} can be achieved by standard energy integral method,
cf.\ Step~4 of the proof of Proposition~\ref{approximate1} in~\cite{wu} 
or~\cite[Prop 1]{PaicuZarnescuARMA2012}. By Sobolev's embedding theorem,
$H^1(\O)\hookrightarrow L^6(\O)$ and we deduce from \eqref{dissipation} that
 \begin{equation}\label{weakbound}
   \|Q\|_{L^{\infty}(0,t;H^1(\O))}+ \|u\|_{L^{\infty}(0,t;L^2(\O))}
\leq C(Q_0,u_0)\quad \text{for a.e. } t\in (0,T)\,.
 \end{equation}
 The bound for $u$ in \eqref{weakbound} is clear and  the bound for 
$Q$ can be deduced from  the estimate $f_B\geq\frac{c}4\tr Q^4 -C$ for some $C>0$, 
which follows from Young's inequality.

For $d=2$ and $\nu$ constant, we can establish the following higher-order 
energy inequality following the steps of \cite[Lemma 3.2]{wu}.

\begin{lemma}\label{uni2d}
Let $d=2$ and $\nu$ be constant.
Then the local strong solution established in Theorem \ref{maintheorem}
satisfies 
  \begin{equation*}
    \frac{d}{dt}\mathcal{B}(t)+\nu\|Au(\cdot, t)\|_{L^2(\O)}^2
+\|\nabla (H(Q(\cdot,t)))\|_{L^2(\O)}^2
\leq C\(\mathcal{B}^2(t)+\mathcal{B}(t)\)\quad \forall t\in [0,T)
  \end{equation*}
where $C$ is a constant which may depend on $\|u_0\|_{H^1(\O)},\|Q_0\|_{H^2(\O)}$ 
but is independent of $T$.
\end{lemma}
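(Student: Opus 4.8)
The plan is to run a higher-order energy estimate on the strong solution $(u,Q)$ of Theorem~\ref{maintheorem}. Throughout, write $H(Q)=\Delta Q+L(Q)$ (recall $\lambda=\Gamma=1$); since $Q|_{\partial\O}=0$ and, by the compatibility condition built into $X_{T;Q}$, also $\Delta Q|_{\partial\O}=0$, we have $H(Q)|_{\partial\O}=0$, and likewise $\dt Q|_{\partial\O}=0$. First I would test the first equation of~\eqref{strongformeqn} by the Stokes operator $Au$; since $\nu$ is constant one has $\diverg(\nu D(u))=\frac{\nu}{2}\Delta u$, the pressure drops out against $Au$, and using $2\|D(u)\|_{L^2(\O)}^2=\|\nabla u\|_{L^2(\O)}^2$ for solenoidal $u$ with zero trace together with Korn's inequality this yields an identity of the form $\frac{d}{dt}\|\nabla u\|_{L^2(\O)}^2+c\,\nu\,\|Au\|_{L^2(\O)}^2=-\int_\O(u\cdot\nabla)u\cdot Au+\int_\O\diverg(\sigma(Q,Q)+\tau(Q))\cdot Au$. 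Next I would test the third equation of~\eqref{strongformeqn} by $-\Delta H(Q)$; using $H(Q)|_{\partial\O}=0$ and $\dt Q|_{\partial\O}=0$ to integrate by parts, this produces $\frac{d}{dt}\|H(Q)\|_{L^2(\O)}^2$ — up to lower-order terms coming from $\dt L(Q)$ — together with $\|\nabla H(Q)\|_{L^2(\O)}^2$ on the left-hand side, and, substituting $\dt Q=H(Q)-(u\cdot\nabla)Q+S(\nabla u,Q)$, a transport term $-\int_\O(u\cdot\nabla)Q:\Delta H(Q)$, the rotational term $\int_\O S(\nabla u,Q):\Delta H(Q)$, and lower-order terms on the right. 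Since the strong solution need not a priori carry enough derivatives for all these manipulations (in particular $\Delta H(Q)$ is not in $L^2$ when only $Q\in L^2(0,T;H^3)$), I would perform the computation on sufficiently smooth approximations — the regularised solutions of~\eqref{approstrong} in the linearised scheme, or a Galerkin approximation — derive the inequality with constants independent of the approximation parameter, and pass to the limit, exactly the device used in Step~7 of the proof of Proposition~\ref{approximate1}.

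The heart of the argument is to add the two identities and observe that the two top-order coupling terms — $\int_\O\diverg\sigma(Q,Q)\cdot Au$ from the velocity equation and $\int_\O S(\nabla u,Q):\Delta H(Q)$ from the equation for $Q$ — cancel up to terms of strictly lower order. This is precisely the cancellation property proved in Step~7 of the proof of Proposition~\ref{approximate1}, now with the frozen tensor replaced by $Q$ itself, with $\nu$ constant, and with $\Delta Q$ replaced by $H(Q)=\Delta Q+L(Q)$ wherever it occurs; the extra pieces generated by $L(Q)$ and $\Delta L(Q)$ are of lower order and are estimated directly. The Ericksen stress contribution $\int_\O\diverg\tau(Q)\cdot Au$ does not need this cancellation: writing $\diverg(\nabla Q\odot\nabla Q)=\nabla(\frac12|\nabla Q|^2)+\Delta Q_{k\ell}\,\nabla Q_{k\ell}$, the gradient part is annihilated by $Au$ and the remainder is bounded directly.

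It then remains to estimate every leftover term by $\delta(\|Au\|_{L^2(\O)}^2+\|\nabla H(Q)\|_{L^2(\O)}^2)+C(\mathcal B^2+\mathcal B)$ and to absorb the $\delta$-terms into the left-hand side. For this I would use: the two-dimensional interpolation inequalities of Section~\ref{prelim}, in particular $\|f\|_{L^4(\O)}\leq C\|f\|_{L^2(\O)}^{1/2}\|\nabla f\|_{L^2(\O)}^{1/2}$ and~\eqref{inter4}; the $H^2$-estimate~\eqref{stokregular} for the Stokes operator, bounding $\|u\|_{H^2(\O)}$ by $\|Au\|_{L^2(\O)}$; the elliptic estimate~\eqref{ellipticest} applied to $\Delta Q=H(Q)-L(Q)$ and to $\nabla\Delta Q$, which — because $L(Q)$ is at most cubic in $Q$ and hence controlled by powers of $\|Q\|_{H^1(\O)}$ via $H^1\hookrightarrow L^6$ — gives $\|Q\|_{H^2(\O)}\leq C(\|H(Q)\|_{L^2(\O)}+1)$ and $\|Q\|_{H^3(\O)}\leq C(\|\nabla H(Q)\|_{L^2(\O)}+\|Q\|_{H^2(\O)})$; the equation itself to bound $\|\dt Q\|_{L^2(\O)}$ by $\|H(Q)\|_{L^2(\O)}$ plus lower-order quantities; and, decisively, the global a~priori bound~\eqref{weakbound}, which yields $\|u\|_{L^\infty(0,T;L^2(\O))}+\|Q\|_{L^\infty(0,T;H^1(\O))}\leq C(u_0,Q_0)$ and is where the dependence of the final constant on the initial data enters. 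A concluding application of Young's inequality (noting $\mathcal B^{3/2}\leq\frac12(\mathcal B^2+\mathcal B)$) then gives exactly the asserted differential inequality.

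The main obstacle is twofold. First, making the top-order cancellation explicit in this fully nonlinear, $Q$-dependent setting and legitimising the repeated integrations by parts, including the boundary terms — handled by the approximation argument above and the boundary conditions $Q|_{\partial\O}=\Delta Q|_{\partial\O}=\dt Q|_{\partial\O}=0$. Second, the two-dimensional interpolation bookkeeping: each remaining product must be split so that the nonlinearity ends up exactly quadratic in $\mathcal B$, rather than producing something like $\|Au\|_{L^2(\O)}^2\,\mathcal B$ which cannot be absorbed. It is here that $d=2$ is essential — Ladyzhenskaya's inequality closes the estimate, whereas in $d=3$ the same terms would be supercritical. Once these points are settled the remaining estimates are routine.
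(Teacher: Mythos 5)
Your proposal is correct and follows essentially the same route as the paper's proof: test the momentum equation with $Au$ and the $Q$-equation with $\Delta H(Q)$, justify the formal manipulations by a density/approximation argument, exploit the cancellation of the two top-order coupling terms exactly as in Step~7 of the proof of Proposition~\ref{approximate1} (with $\tildeQ$ replaced by $Q$ and $\Delta Q$ by $H(Q)$), annihilate the gradient part of the Ericksen stress against $Au$ via the Helmholtz orthogonality, and close with Ladyzhenskaya-type two-dimensional interpolation, elliptic estimates for $\Delta Q=H(Q)-L(Q)$, and the global bound~\eqref{weakbound}. The only slip is the sign of the multiplier for the $Q$-equation: the paper tests with $+\Delta H(Q)$, which is what yields $\frac{d}{dt}\|H(Q)\|_{L^2(\O)}^2$ and $\|\nabla H(Q)\|_{L^2(\O)}^2$ with the positive signs you describe, but this is immaterial.
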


In the three-dimensional case we obtain the corresponding estimate 
following the steps of \cite[Lemma 4.1]{wu}.

\begin{lemma}\label{uni3d}
  Let $d=3$ and $\nu$ be constant. Then 
the local strong solution established in Theorem \ref{maintheorem}
satisfies
\begin{equation}\label{3duniform}
    \frac{d}{dt}\mathcal{B}(t)+\nu\|A u(\cdot, t)\|_{L^2(\O)}^2
+\|\nabla (H(Q(\cdot,t)))\|_{L^2(\O)}^2
\leq C\(\mathcal{B}^4(t)+\mathcal{B}(t)\)\quad \forall t\in [0,T)
  \end{equation}
  where $C$ is a constant which may  depend on 
$\|u_0\|_{H^1(\O)},\,\|Q_0\|_{H^2(\O)}$ but is independent of $T$.
\end{lemma}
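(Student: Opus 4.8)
The plan is to establish a second, higher--order energy estimate for the strong solution of Theorem~\ref{maintheorem}, following the scheme of Steps~5--7 in the proof of Proposition~\ref{approximate1} (with the linearization point replaced by the solution $Q$ itself and $\varepsilon=0$), and then to close it into \eqref{3duniform}. Two reductions will be used throughout. From the basic dissipation law \eqref{dissipation} and its consequence \eqref{weakbound}, $\|u\|_{L^{\infty}(0,T;L^2)}$ and $\|Q\|_{L^{\infty}(0,T;H^1)}$ are bounded by the data alone; combining this with $\lambda\Delta Q=H(Q)-L(Q)$, the elliptic estimate \eqref{ellipticest}, the embedding $H^1(\O)\hookrightarrow L^6(\O)$ and the polynomial form of $L$, one obtains
\begin{equation*}
 \|Q(t)\|_{H^2(\O)}\leq C\barg{\|H(Q(t))\|_{L^2(\O)}+1},\qquad
 \|Q(t)\|_{L^{\infty}(\O)}\leq C\barg{\mathcal{B}(t)^{\frac12}+1},
\end{equation*}
and, by elliptic regularity (iterating \eqref{ellipticest} via the compatibility condition $\Delta Q|_{\p\O}=0$), $\|Q(t)\|_{H^3(\O)}\leq C\barg{\|\grad H(Q(t))\|_{L^2(\O)}+\mathcal{B}(t)+1}$. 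Since $\nu$ is constant, $Au=-\tfrac{\nu}{2}P_{\sigma}\Delta u$ and $\|u\|_{H^2(\O)}\leq C_{\mathcal{S}}\|Au\|_{L^2(\O)}$ by \eqref{stokregular}; moreover $Q|_{\p\O}=\Delta Q|_{\p\O}=0$ forces $H(Q)|_{\p\O}=0$ and $\dt{Q}|_{\p\O}=0$, so all integrations by parts below have vanishing boundary terms.

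First I would produce the two differentiated identities (after the density regularization used in Step~7 of Proposition~\ref{approximate1}, harmless since the identities are linear in their top--order terms). Testing the Helmholtz--projected momentum equation with $Au$ and using $\div u=0$ and $u|_{\p\O}=0$ yields an identity of the form
\begin{equation*}
 c_{1}\frac{d}{dt}\|\grad u\|_{L^2}^2+\nu\|Au\|_{L^2}^2
 =\nu\barg{P_{\sigma}\div\barg{\sigma(Q,Q)+\tau(Q)},Au}_{L^2}-\nu\barg{P_{\sigma}\barg{(u\cdot\grad)u},Au}_{L^2}
\end{equation*}
with $c_{1}=c_{1}(\nu)>0$. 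For the director, writing $\dt{H(Q)}=\lambda\Delta\dt{Q}+DL(Q)(\dt{Q})$, differentiating $\tfrac12\|H(Q)\|_{L^2}^2$, integrating by parts once (no boundary term, since $H(Q)|_{\p\O}=0$), and substituting $\dt{Q}=\Gamma H(Q)-(u\cdot\grad)Q+S(\grad u,Q)$ gives
\begin{equation*}
 \frac12\frac{d}{dt}\|H(Q)\|_{L^2}^2+\lambda\Gamma\|\grad H(Q)\|_{L^2}^2
 =\lambda\barg{\grad\barg{(u\cdot\grad)Q-S(\grad u,Q)},\grad H(Q)}_{L^2}+\barg{DL(Q)(\dt{Q}),H(Q)}_{L^2},
\end{equation*}
where $\dt{Q}$ on the right is again expressed through the evolution equation. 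Adding the two identities places the good terms $\nu\|Au\|_{L^2}^2$ and $\lambda\Gamma\|\grad H(Q)\|_{L^2}^2$ on the left and all of the coupling between $u$ and $Q$ on the right.

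The decisive point — exactly as in Step~7 of Proposition~\ref{approximate1} — is that the only genuinely critical coupling, namely the pairing of $Au$ with $\div\sigma(Q,Q)$ together with the pairing of $\grad H(Q)$ with $\grad S(\grad u,Q)$, has cancelling leading parts by the algebraic identity \eqref{tensor4} (after the requisite integrations by parts); this cancellation is essential because the scalar factor $\|Q\|_{L^{\infty}}^2$ accompanying those top--order terms cannot be made small. After the cancellation, the surviving contributions — the remainders of that pairing, the Ericksen stress term, the two convective terms, and the lower--order term $\barg{DL(Q)(\dt{Q}),H(Q)}$ — involve $u$ and $Q$ only at sub--critical order and are estimated, via the three--dimensional Gagliardo--Nirenberg inequalities \eqref{inter2}--\eqref{inter4}, the product estimate \eqref{important}, the reductions of the first paragraph, and Young's inequality, by $\delta\barg{\|Au\|_{L^2}^2+\|\grad H(Q)\|_{L^2}^2}+C(\delta)\barg{\mathcal{B}^4+\mathcal{B}}$; a typical instance is
\begin{equation*}
 \barg{P_{\sigma}\barg{(u\cdot\grad)u},Au}_{L^2}\leq C\|\grad u\|_{L^2}^{3/2}\|Au\|_{L^2}^{3/2}\leq\delta\|Au\|_{L^2}^2+C\|\grad u\|_{L^2}^6\leq\delta\|Au\|_{L^2}^2+C\barg{\mathcal{B}^4+\mathcal{B}},
\end{equation*}
and the remaining terms are handled similarly, using throughout that $\mathcal{B}^{k}\leq\mathcal{B}^4+\mathcal{B}$ for $1\leq k\leq4$. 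Choosing $\delta$ small and absorbing into the left--hand side yields \eqref{3duniform}. I expect the main obstacle to be precisely this last part: isolating the cancellation and then distributing the $H^1$--$H^2$ interpolation exponents so that no remainder carries a power of $\mathcal{B}$ beyond the fourth; in two dimensions the sharper interpolation inequalities let the same scheme close with the quadratic right--hand side of Lemma~\ref{uni2d}.
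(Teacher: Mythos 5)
Your proposal is correct and follows essentially the same route as the paper: test the momentum equation with $Au$ and the $Q$-equation with $\Delta H$ (your ``differentiate $\tfrac12\|H(Q)\|_{L^2}^2$ and commute through $L$'' is exactly the paper's identity \eqref{leading}), exploit the cancellation of the highest-order coupling terms $\barg{\diverg\sigma(Q,Q),Au}$ and $\barg{S(\nabla u,Q),\Delta H}$ via \eqref{tensor4} after a density argument, and close the remaining sub-critical terms with the Gagliardo--Nirenberg inequalities, the bound $\|Q\|_{H^2}\leq C(\mathcal{B}^{1/2}+1)$ from \eqref{dissipation}, and Young's inequality. The only detail worth flagging is that the Ericksen-stress and $Q$-convection terms also need the orthogonality of the Helmholtz decomposition (to kill the gradient part $\tfrac12\nabla|\nabla Q|^2$ and the term $\nabla f_B(Q)\cdot Au$), which your sketch subsumes under ``estimated similarly'' but which the paper isolates explicitly in its treatment of $\mathcal{J}_1+\mathcal{J}_3$.
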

The proofs of these two lemmas are similar to the proofs 
of~\cite[Lemma 3.2, Lemma 4.1]{wu}. We only need to show that the 
homogeneous Dirichlet condition along with the compatibility condition 
enables us to obtain the same a priori estimate. 
We present here the proof for \eqref{3duniform} and 
indicate how the proof can be adapted to the two-dimensional situation. 

\begin{proof}[Proof of Lemma \ref{uni3d}]
Without loss of generality, we can choose $\nu=2$. 
During the proof, we shall frequently use the boundary conditions
\begin{equation}\label{boundarycondition}
  u|_{(0,T)\times\p\O}=0,~Q|_{(0,T)\times\p\O}=\Delta Q|_{(0,T)\times\p\O}=H(Q)|_{(0,T)\times\p\O}=0.
\end{equation}
We test the third equation of \eqref{strongformeqn} by $\Delta H$ and the 
first equation by $Au$ (recall \eqref{stokesoper}) and integrate over $\O$
to obtain
\begin{alignat}{1}\label{Qtensor}
      \int_{\O}\partial_tQ_{\alpha\beta}\Delta H_{\alpha\beta}&+\int_\O u_{\gamma}\partial_\gamma Q_{\alpha\beta}\Delta H_{\alpha\beta}-\int_\O S_{\alpha\beta}(\nabla u,Q)\Delta H_{\alpha\beta}=\int_{\O}H_{\alpha\beta}\Delta H_{\alpha\beta},
\\\nonumber
  \int_{\O}\partial_t u_{\alpha}(Au)_{\alpha}&+\int_\O u_{\beta}\partial_\beta u_{\alpha}(Au)_{\alpha}\\\label{velocity}
&=\int_{\O}\Delta u_{\alpha} (Au)_{\alpha}+\int_\O\partial_\beta\tau_{\alpha\beta}(Q)(Au)_{\alpha}+\int_\O\partial_\beta \sigma_{\alpha\beta}(Q,Q)(Au)_{\alpha}.
\end{alignat}
For the first term in \eqref{Qtensor} we find
\begin{equation}\label{leading}
  \begin{split}
    &\int_{\O}\partial_t Q_{\alpha\beta}\Delta (H_{\alpha\beta}(Q))
=\int_{\O}\left(\Delta \partial_t Q_{\alpha\beta}\right) H_{\alpha\beta}(Q)\\
    =&\int_{\O} \p_t\(H_{\alpha\beta}(Q)-L_{\alpha\beta}(Q)\) H_{\alpha\beta}(Q)\\
    =&\int_{\O} \p_t (H_{\alpha\beta}(Q))H_{\alpha\beta}(Q)
-\p_t(L_{\alpha\beta}(Q)) H_{\alpha\beta}(Q)\\
    =&\frac 12\frac {d}{dt}\|H(Q)\|^2_{L^2(\O)}
-\int_{\O}\partial_t Q_{\gamma\delta}
\frac{\p L_{\alpha\beta}(Q)}{\p Q_{\gamma\delta}} H_{\alpha\beta}(Q)\,.
  \end{split}
\end{equation}
 The combination of \eqref{Qtensor}, \eqref{velocity} and \eqref{leading} 
gives the following energy estimate,
\begin{equation}\label{energ1}
\begin{split}
&\frac 12\frac {d}{dt}\(\|H(Q)\|^2_{L^2(\O)}
+\|A^{\frac 12} u\|_{L^2(\O)}^2\)
+\(\|\nabla H(Q)\|^2_{L^2(\O)}+\|A u\|^2_{L^2(\O)}\)\\
& \qquad-\int_{\O}\partial_t Q_{\gamma\delta}\frac{\p L_{\alpha\beta}(Q)}{\p Q_{\gamma\delta}}
 H_{\alpha\beta}(Q)
+  \int_{\O}u_{\gamma}\partial_\gamma Q_{\alpha\beta}\Delta H_{\alpha\beta}
-\int_{\O}S_{\alpha\beta}(\nabla u,Q)\Delta H_{\alpha\beta}\\
&\qquad
-\int_{\O}\partial_\beta \tau_{\alpha\beta}(Q)(Au)_{\alpha}
-\int_{\O}\partial_\beta\sigma_{\alpha\beta}(Q,Q)(Au)_{\alpha}
  +\int_{\O}u_{\beta}\partial_\beta u_{\alpha}(Au)_{\alpha}=0\,.
 \end{split}
\end{equation}
We define 
 \begin{alignat*}{3}
\mathcal{J}_0 
&\, =-\int_{\O}\partial_t Q_{\gamma\delta}
\frac{\p L_{\alpha\beta}(Q)}{\p Q_{\gamma\delta}} H_{\alpha\beta}(Q)\,, &&\\
\mathcal{J}_1
&\,=\int_{\O}u_{\gamma}\partial_\gamma Q_{\alpha\beta}\Delta H_{\alpha\beta}\,,
& \quad\mathcal{J}_3&\,=-\int_{\O}\tau_{\alpha\beta,\beta}(Q)(Au)_{\alpha}\,,\\
\mathcal{J}_2
&\,=-\int_{\O}S_{\alpha\beta}(\nabla u,Q)\Delta H_{\alpha\beta}\,,
 &\,
\quad\mathcal{J}_4&\,=-\int_{\O}\partial_\beta \sigma_{\alpha\beta}(Q,Q)(Au)_{\alpha}\,,\\
\mathcal{J}_5&\,=\int_{\O}u_{\beta}\partial_\beta u_{\alpha}(Au)_{\alpha}\,,&&
 \end{alignat*}
and rewrite \eqref{energ1} as
 \begin{equation}\label{energ2}
\frac 12\frac {d}{dt}\mathcal{B}(t)+\(\|\nabla H(Q)\|^2_{L^2(\O)}+\|A u\|^2_{L^2(\O)}\)+\sum_{k=0}^5 \mathcal{J}_k=0\,.
\end{equation}
A series of technical estimates in Appendix~\ref{proofof710}
implies in the three-dimensional situation the following estimate,
\begin{equation}\label{eslow}
  \left| \sum_{k=0}^5\mathcal{J}_k\right|
\leq \epsilon\(\|\nabla H(Q)\|^2_{L^2}+\|A u\|_{L^2}^2\)
+C(\epsilon,\|u_0\|_{H^1(\O)},\|Q_0\|_{H^2(\O)})
\mathcal{B}(t)\(\mathcal{B}^3(t)+1\)\,.
  \end{equation}
The combination of \eqref{energ2} and \eqref{eslow} implies for
$\epsilon>0$ sufficiently small the assertion of the lemma in 
formula~\eqref{3duniform}.
\end{proof}

\begin{proof}[Proof of Lemma \ref{uni2d}]
 The proof in the two-dimensional situation is done in exactly the same way. 
Instead of \eqref{eslow} one verifies
\begin{equation}\label{eslow2d}
  \left| \sum_{k=0}^5\mathcal{J}_k\right|
\leq \epsilon\(\|\nabla H(Q)\|^2_{L^2}+\|A u\|_{L^2}^2\)
+C(\epsilon,\|u_0\|_{H^1(\O)},\|Q_0\|_{H^2(\O)})\mathcal{B}(t)\(\mathcal{B}(t)+1\)\,,
  \end{equation}
cf.\ Section~\ref{proofeslow2d} for more details.
\end{proof}

\begin{appendix}
\section{Proof of \eqref{eslow}}\label{proofof710}
\subsection{Some Inequalities}
 In the sequel,  we use $C=C(u_0,Q_0)$ for a constant which may depend on 
$ \|u_0\|_{H^1(\O)}+\|Q_0\|_{H^2(\O)}$ but is independent of $t$. 
Elliptic regularity, 
the definition of $\mathcal{B}(t)$,~\eqref{weakbound} and Sobolev's embedding 
imply
\begin{equation}\label{lower}
   \begin{split}
     \|Q(t)\|^2_{H^2(\O)}&\leq C\|\Delta Q(t)\|^2_{L^2(\O)}\leq
  C\(\|H(Q(t))\|^2_{L^2(\O)}+\|L(Q(t))\|^2_{L^2(\O)}\) \\
  &\leq C(u_0,Q_0)\(\mathcal{B}(t)+1\)\text{ for a.e. } t\in (0,T)\,.
   \end{split}
\end{equation}
Let $F_k(Q)$ be a matrix-valued polynomial of $Q$ of degree less than or equal to $k$.
It follows with Sobolev's embedding theorem, Poincar\'{e}'s inequality  
and~\eqref{weakbound} that
\begin{equation}\label{poly3}
  \|F_3(Q(t))\|_{L^2(\O)}\leq C(\|\nabla Q(t)\|_{L^2(\O)}^3+1) 
\leq C(Q_0,u_0)\quad \text{for a.e. } t\in (0,T)
\end{equation}
and
\begin{equation}\label{poly2}
  \|F_2(Q(t))\|_{L^3(\O)}\leq C(\|\nabla Q(t)\|_{L^2(\O)}^2+1)
\leq C(Q_0,u_0)\quad \text{for a.e. } t\in (0,T)\,.
\end{equation}
In the following $\mathcal{J}_i$, $i=0,\ldots,5$ are defined as in Section~\ref{sec:Global}.
\subsection{Estimate of $\mathcal{J}_0$}
With the third equation in~\eqref{strongformeqn} one finds
\begin{equation*}
  \begin{split}
    -\mathcal{J}_0
=&\int_{\O}\partial_tQ_{\gamma\delta}\frac{\p L_{\alpha\beta}(Q)}{\p Q_{\gamma\delta}} H_{\alpha\beta}(Q)\\
    =&\int_{\O}\left(-u_{\theta} \partial_\theta Q_{\gamma\delta}+S_{\gamma\delta}
+ H_{\gamma\delta}\right)\frac{\p L_{\alpha\beta}(Q)}{\p Q_{\gamma\delta}}
 H_{\alpha\beta}(Q):=\sum_{i=1}^3\mathcal{I}_i\,.
      \end{split}
\end{equation*}
Note that by~\eqref{strongforcing} the derivatives
$\p L_{\alpha\beta}(Q)/ \p Q_{\gamma\delta}$ are quadratic expressions in $Q$, 
i.e., of the form $F_2(Q)$.
With \eqref{poly2}, the inequalities of H\"{o}lder, Cauchy-Schwarz, and 
Sobolev, \eqref{weakbound} and \eqref{lower} one infers
\begin{equation*}
  \begin{split}
   \left|\mathcal{I}_1\right|\leq & \|H(Q)\|_{L^6}\|u\|_{L^6}\|\nabla Q\|_{L^3}\|F_2(Q)\|_{L^3}\\
   \leq & \epsilon\|\nabla (H(Q))\|_{L^2}^2+C(\epsilon,Q_0,u_0)\|\nabla u\|_{L^2}^2\|\nabla Q\|_{L^3}^2\\
   \leq & \epsilon\|\nabla (H(Q))\|_{L^2}^2+C(\epsilon,Q_0,u_0)\|\nabla u\|_{L^2}^2
   \(\|H(Q)\|^2_{L^2}+1\)\,.
      \end{split}
\end{equation*}
A similar argument leads to the following two estimates:
\begin{equation*}
  \begin{split}
     |\mathcal{I}_2|\leq &\|\nabla u\|_{L^6}\|F_3(Q)\|_{L^2}\|H(Q)\|_{L^3}
\leq C\|\nabla u\|_{L^6}\|F_3(Q)\|_{L^2}\|\nabla (H(Q))\|^{\frac 12}_{L^2}\|H(Q)\|^{\frac 12}_{L^2}\\
   \leq  &C(u_0,Q_0)\(\|\nabla  u\|_{H^1}^2
+\|\nabla (H(Q))\|^2_{L^2}\)^{\frac 34}\|H(Q)\|^{\frac 12}_{L^2} \\
   \leq &\epsilon \(\|Au\|_{L^2}^2+\|\nabla (H(Q))\|^2_{L^2}\)
+C(\epsilon,u_0,Q_0)\|H(Q)\|^2_{L^2}\,,\\[.5ex]
  \left|\mathcal{I}_3\right|\leq &\|H(Q)\|_{L^6}\|H(Q)\|_{L^2}\|F_2(Q)\|_{L^3}
   \leq \epsilon\|\nabla (H(Q))\|^2_{L^2}+C(\epsilon,u_0,Q_0)\|H(Q)\|_{L^2}^2\,.
  \end{split}
\end{equation*}
A combination of all of the above estimates yields
\begin{equation}\label{esj0}
|\mathcal{J}_0|
\leq \epsilon\(\|\nabla (H(Q))\|^2_{L^2}
+\|A u\|_{L^2}^2\)+C(\epsilon,u_0,Q_0)\mathcal{B}(t)\(\mathcal{B}(t)+1\)\,.
\end{equation}

\subsection{Estimates of $\mathcal{J}_1+\mathcal{J}_3$}
Although there is no cancellation between the terms $\mathcal{J}_1$ and $\mathcal{J}_3$,
we combine their estimates since they share some similarities.
For $\mathcal{J}_1$, we employ \eqref{boundarycondition}, integrate by parts,
insert the definition $H_{\alpha\beta}=\Delta Q_{\alpha\beta}+L_{\alpha\beta}$
and use
\begin{align*}
 \int_{\O} u_{\gamma} (\partial_\gamma H_{\alpha\beta})H_{\alpha\beta}=0
\end{align*}
to obtain
\begin{equation}\label{j1}
  \begin{split}
    \mathcal{J}_1 =&
\int_{\O}u_{\gamma}\partial_\gamma Q_{\alpha\beta}\Delta H_{\alpha\beta}
=\int_{\O} \Delta (u_{\gamma}\partial_\gamma Q_{\alpha\beta})H_{\alpha\beta}\\
=&\int_{\O} \Delta u_{\gamma} (\partial_\gamma Q_{\alpha\beta})H_{\alpha\beta}
+\int_{\O} u_{\gamma}\(\partial_\gamma H_{\alpha\beta}
-\partial_\gamma L_{\alpha\beta}\)H_{\alpha\beta}+2\int_{\O}  \partial_\xi u_{\gamma} (\partial_\gamma\partial_\xi Q_{\alpha\beta})H_{\alpha\beta}\\
=&\int_{\O} \Delta u_{\gamma} (\partial_\gamma Q_{\alpha\beta})H_{\alpha\beta}
-\int_{\O} u_{\gamma} (\partial_\gamma L_{\alpha\beta})H_{\alpha\beta}
+2\int_{\O}  \partial_\xi u_{\gamma} (\partial_\gamma\partial_\xi Q_{\alpha\beta})
H_{\alpha\beta}\,.
\end{split}
\end{equation}
 For $\mathcal{J}_3$, we  integrate by parts and obtain
 \begin{equation*}
   \begin{split}
     \mathcal{J}_3
=&-\int_{\O}\tau_{\alpha\beta,\beta}(Q)(Au)_{\alpha}
=\int_{\O}\p_{\beta}
\left(\partial_\beta Q_{\gamma\delta}(\partial_\alpha Q_{\gamma\delta})
\right)(Au)_{\alpha}\\
=&   \int_{\O}  \Delta Q_{\gamma\delta}(\partial_\alpha Q_{\gamma\delta})(Au)_{\alpha}
+\int_{\O} \partial_\beta Q_{\gamma\delta}
(\partial_\alpha\partial_\beta Q_{\gamma\delta})(Au)_{\alpha}\,.
   \end{split}
 \end{equation*}
 The second term in the last step of the above estimate vanishes due to the orthogonality of the Helmholtz decomposition:
  \begin{equation*}
    \int_{\O} \partial_\beta Q_{\gamma\delta}(\partial_\alpha\partial_\beta Q_{\gamma\delta})(Au)_{\alpha}=-\int_{\O}\frac 12\nabla |\nabla Q|^2\cdot P_{\sigma}\Delta u=0\, .
  \end{equation*}
 Therefore we obtain
 \begin{equation}\label{j3}
 \begin{split}
     \mathcal{J}_3 =\int_{\O}  \Delta Q_{\gamma\delta}(\partial_\alpha Q_{\gamma\delta})(Au)_{\alpha}
   =\int_{\O}  \(H_{\gamma\delta} -L_{\gamma\delta}\)(\partial_\alpha Q_{\gamma\delta})(Au)_{\alpha}=\int_{\O}   H_{\gamma\delta}(\partial_\alpha  Q_{\gamma\delta})(Au)_{\alpha}\,.
 \end{split}
 \end{equation}
 In the last step, we used
 that by the definition of $L$ in \eqref{loworder} and the formula \eqref{lowerder} 
we can calculate with the help of $\partial_\alpha\tr(Q)=0$
  \begin{equation*}
   \int_{\O}L_{\gamma\delta}(A)(\partial_\alpha Q_{\gamma\delta})(Au)_{\alpha}
=-\int_{\O}\(\frac 1d\tr(Q^2)\I
+\frac{\partial f_B}{\partial Q}(Q)\):\nabla Q\cdot Au
=-\int_{\O}\nabla  (f_B(Q))\cdot Au=0\,.
 \end{equation*}
 We obtain by summation of \eqref{j1} and \eqref{j3} that
 \begin{align}\label{j1j3}
        &\left(\mathcal{J}_1+\mathcal{J}_3 \right)\\\nonumber
           &=\underbrace{\int_{\O} \Delta u_{\gamma} 
(\partial_\gamma Q_{\alpha\beta})H_{\alpha\beta}}_{\mathcal{R}_1} 
\underbrace{-\int_{\O} u_{\gamma} 
(\partial_\gamma L_{\alpha\beta})H_{\alpha\beta}}_{\mathcal{R}_2}
\underbrace{+2\int_{\O}  \partial_\xi u_{\gamma} 
(\partial_\gamma \partial_\xi Q_{\alpha\beta})H_{\alpha\beta}}_{\mathcal{R}_3}
\underbrace{+\int_{\O}   H_{\gamma\delta} 
(\partial_\alpha Q_{\gamma\delta})(Au)_{\alpha}}_{\mathcal{R}_4}\,.
 \end{align}
 Among these four terms,  $\mathcal{R}_1$ and $\mathcal{R}_4$ are of the same type. Using \eqref{lower}
 \begin{equation*}
   \begin{split}
     |\mathcal{R}_1|+|\mathcal{R}_4|&\leq \epsilon\|Au\|_{L^2}^2+C(\epsilon)\|\nabla Q\|^2_{L^6}\|H(Q)\|^2_{L^3}\\
     &\leq \epsilon\|Au\|_{L^2}^2
+C(\epsilon)\(\mathcal{B}(t)+1\)\|H(Q)\|_{L^2}\|\nabla (H(Q))\|_{L^2}\\
     &\leq \epsilon\(\|Au\|_{L^2}^2+\|\nabla (H(Q))\|_{L^2}^2\)
+C(\epsilon)\(\mathcal{B}^2(t)+1\)\|H(Q)\|_{L^2}^2\\
     &\leq \epsilon\(\|Au\|_{L^2}^2+\|\nabla (H(Q))\|_{L^2}^2\)
+C(\epsilon)\(\mathcal{B}^2(t)+1\)\mathcal{B}(t)\,.
   \end{split}
 \end{equation*}
As for $\mathcal{R}_2$ and $\mathcal{R}_3$ we obtain with \eqref{poly2} 
and \eqref{lower}
\begin{equation*}
   \begin{split}
    |\mathcal{R}_2|
     \leq &\|H(Q)\|_{L^6}\|u\|_{L^6}\|\|\nabla Q\|_{L^3}\|F_2(Q)\|_{L^3}\\
     \leq &C(u_0,Q_0)\|\nabla (H(Q))\|_{L^2}\|\nabla u\|_{L^2}\|\nabla Q\|_{L^3}\\
     \leq & \epsilon\|\nabla (H(Q))\|^2_{L^2}
+C(\epsilon,u_0,Q_0)\|\nabla u\|^2_{L^2}\(1+\mathcal{B}(t)\)\,.
   \end{split}
 \end{equation*}
 Estimates \eqref{inter2}, \eqref{poly3} and the $H^2$-estimate for the 
Dirichlet-Laplace equation lead to
 \begin{equation}\label{estimate1}
   \begin{split}
   |\mathcal{R}_3|
     \leq &2\|\nabla u\|_{L^3}\|\nabla^2 Q\|_{L^2}\|H(Q)\|_{L^6}\\
     \leq &C\|\nabla u\|^{\frac 12}_{L^2}\|\nabla^2 u\|^{\frac 12}_{L^2}
\(\|L(Q)\|_{L^2}+\|H( Q)\|_{L^2}\)\|\nabla (H(Q))\|_{L^2}\\
     \leq &C\(\|\nabla (H(Q))\|_{L^2}^2+\|\nabla^2 u\|^2_{L^2}\)^{\frac 34} 
\|\nabla u\|^{\frac 12}_{L^2}\(\|L(Q)\|_{L^2}+\|H( Q)\|_{L^2}\)\\
     \leq &\epsilon\(\|\nabla (H(Q))\|_{L^2}^2+\|Au\|^2_{L^2}\)
+ C(\epsilon,u_0,Q_0)\|\nabla u\|^2_{L^2}\(1+\mathcal{B}^2(t)\)\,.
   \end{split}
 \end{equation}
 A combination of all of the above estimates yields
 \begin{equation}\label{esj13}
  \left|\mathcal{J}_1+\mathcal{J}_3\right|
\leq \epsilon \(\|\nabla H(Q)\|_{L^2}^2+\|A u\|^2_{L^2}\)
+C(\epsilon,u_0,Q_0)\mathcal{B}(t)\(1+\mathcal{B}^2(t)\)\,.
\end{equation}

 \subsection{Estimates of $\mathcal{J}_2+\mathcal{J}_4$}
 As in the proof of Proposition \ref{approximate1} (Step 7), this is the most important part in the uniform estimate, which shows that the highest order terms  have a cancellation property. In order to show this, we shall integrate by parts several times and third order derivatives of $u$, like $\Delta \nabla u$, might appear, while in the local well-posedness, we only show the $H^2$-regularity for $u$. However, noticing that $\mathcal{J}_2+\mathcal{J}_4$ depends on $u$ (along with its derivatives) linearly, by a standard density argument, we can assume that $\Delta\nabla u\in L^2(\O)$ for almost every $t$ since in the final form that $\mathcal{J}_2+\mathcal{J}_4$ only contains $u$ and its derivatives up to order two.
For the term $\mathcal{J}_2$, by \eqref{tensor4}:
\begin{equation*}
 \mathcal{J}_2=   - \int_{\O}S_{\alpha\beta}\Delta H_{\alpha\beta}   =
 \int_{\O}\partial_\beta u_{\alpha}(Q_{\alpha\gamma}\Delta H_{\gamma\beta}-
\Delta H_{\alpha\gamma}Q_{\gamma\beta})\,.
\end{equation*}
If we integrate by parts and make use of \eqref{boundarycondition}, it follows that
\begin{equation}\label{a6}
\begin{split}
      \mathcal{J}_2
      =&\int_{\O}\Delta (\partial_\beta u_{\alpha}Q_{\alpha\gamma})H_{\gamma\beta}
-\int_{\O}\Delta (\partial_\beta u_{\alpha}Q_{\gamma\beta})H_{ \alpha\gamma}\,.
 \end{split}
 \end{equation}
 By the same trick (the skew-symmetry of the tensor $\sigma$)
that we used to prove \eqref{velicitynew}, we can replace $Au$ by $\Delta u$ in the following and obtain
 \begin{equation*}
  \mathcal{J}_4
=-\int_{\O}\partial_\beta \sigma_{\alpha\beta}(Au)_{\alpha}
=\int_{\O}\partial_\beta \sigma_{\alpha\beta}\Delta u_{\alpha}
=\int_{\O}\left(H_{\alpha\gamma}Q_{\gamma\beta}
-Q_{\alpha\gamma}H_{\gamma\beta}\right)\Delta \partial_\beta u_{\alpha}\,.
 \end{equation*}
The above two equalities imply 
 \begin{equation}\label{j2j4}
   \begin{split}
     &\mathcal{J}_2+\mathcal{J}_4\\
     &=\int_{\O}\partial_\beta u_{\alpha}\Delta Q_{\alpha\gamma}H_{\gamma\beta}-\int_{\O}  \partial_\beta u_{\alpha}\Delta Q_{\gamma\beta}H_{ \alpha\gamma}\\
     &\qquad+2\int_{\O}\left(\partial_\beta \partial_\xi u_{\alpha}(\partial_\xi Q_{\alpha\gamma})
     H_{\gamma\beta}-\partial_\beta \partial_\xi u_{\alpha}(\partial_\xi Q_{\gamma\beta})H_{ \alpha\gamma}\right)\\
     &=\int_{\O}\partial_\beta u_{\alpha}\Delta Q_{\alpha\gamma}H_{\gamma\beta}-\int_{\O}  \partial_\beta u_{\alpha}\Delta Q_{\gamma\beta}H_{ \alpha\gamma}\\
     &\qquad+2\int_{\O}\partial_\beta u_{\alpha}\(\Delta Q_{\beta\gamma}H_{\gamma\alpha}- H_{\beta\gamma}\Delta Q_{\gamma\alpha}+
     \partial_\xi Q_{\beta\gamma}(\partial_\xi H_{\gamma\alpha})-\partial_\xi H_{\beta\gamma}(\partial_\xi Q_{\gamma\alpha}) \)\\
     &=\underbrace{\int_{\O}\partial_\beta u_{\alpha}\(\Delta Q_{\beta\gamma}H_{\gamma\alpha}- H_{\beta\gamma}\Delta Q_{\gamma\alpha} \)}_{\mathcal{I}_1}+\underbrace{2\int_{\O}\partial_\beta u_{\alpha}\(
     \partial_\xi Q_{\beta\gamma}(\partial_\xi H_{\gamma\alpha})
-\partial_\xi H_{\beta\gamma}(\partial_\xi Q_{\gamma\alpha}) \)}_{\mathcal{I}_2}\,.
   \end{split}
 \end{equation}
 The part $\mathcal{I}_1$ can be estimated in the same way as 
\eqref{estimate1}. So we only consider $\mathcal{I}_2$,
  \begin{equation}\label{i2est}
  \mathcal{I}_2
      \leq C(\epsilon)\|\nabla u\|_{L^4}^2\|\nabla Q\|_{L^4}^2
+\epsilon\|\nabla (H(Q))\|_{L^2}^2.
 \end{equation}
The first term on the right-hand side can be estimated by~\eqref{inter2} 
and~\eqref{lower} by
  \begin{equation}\label{3dest}
   \begin{split}
    &\|\nabla u\|_{L^4}^2\|\nabla Q\|_{L^4}^2 \\
    &\leq  \|\nabla u\|_{L^2}^{\frac 12}\|\nabla u\|_{H^1}^{\frac 32}\|\nabla Q\|_{L^2}^{\frac 12}\|\nabla Q\|_{H^1}^{\frac 32}\\
    &\leq  C\|\nabla u\|_{L^2}^{\frac 12}\|A u\|_{L^2}^{\frac 32}\|\nabla Q\|_{L^2}^{\frac 12}\|\Delta Q\|_{L^2}^{\frac 32}\\
    &\leq \epsilon_1\|Au\|_{L^2}^2+C(\epsilon_1)\|\nabla Q\|_{L^2}^{2}\|\nabla u\|_{L^2}^{2} \|\Delta  Q\|_{L^2}^{6}\\
    &\leq \epsilon_1\|Au\|_{L^2}^2+C(\epsilon_1)\|\nabla u\|_{L^2}^{2} \(\|H(Q)\|_{L^2}^2+1\)^{3}\\
    &\leq \epsilon_1\|Au\|_{L^2}^2+C(\epsilon_1)\mathcal{B}(t) \(\mathcal{B}(t)+1\)^{3}\, .
   \end{split}
 \end{equation}
These two estimates leads to
\begin{equation}\label{esj24}
  \left| \mathcal{J}_2
+\mathcal{J}_4 \right|
\leq \epsilon\(\|\nabla H(Q)\|^2_{L^2}+\|A u\|_{L^2}^2\)
+C(\epsilon)\mathcal{B}(t)\(\mathcal{B}^3(t)+1\)\,.
\end{equation}

\subsection{Estimates of $\mathcal{J}_5$ and proof of \eqref{eslow2d}}
The estimate of $\mathcal{J}_5$ is standard and we state the result without proof.
In the three-dimensional case
\begin{equation}\label{esj5}
  |\mathcal{J}_5|
\leq C(\epsilon)\|\nabla u\|_{L^2}^4+\epsilon\|\Delta u\|^2_{L^2}
\leq C(\epsilon)\mathcal{B}^2(t)+\epsilon\|A u\|^2_{L^2}\,.
\end{equation}
The combination of \eqref{esj0}, \eqref{esj13}, \eqref{esj24} and \eqref{esj5} 
implies \eqref{eslow}.


\subsection{Modifications in the two-dimensional case}\label{proofeslow2d} 
In order to prove \eqref{eslow2d} if $d=2$ one can modify the foregoing estimates  
as follows: The term $\mathcal{J}_0$ can be estimated in the same way. 
In order to estimate $\mathcal{J}_1+\mathcal{J}_3$ we estimate 
$|\mathcal{R}_1|+|\mathcal{R}_4|$ as follows:
 \begin{equation*}
   \begin{split}
     |\mathcal{R}_1|+|\mathcal{R}_4|
&\leq \epsilon\|Au\|_{L^2}^2+C(\epsilon)\|\nabla Q\|^2_{L^4}\|H(Q)\|^2_{L^4}\\
     &\leq \epsilon\|Au\|_{L^2}^2+C(\epsilon)\|\nabla Q\|_{L^2}
\|\nabla Q\|_{H^1}\|H(Q)\|_{L^2}\|\nabla (H(Q))\|_{L^2}\\
     &\leq \epsilon\(\|Au\|_{L^2}^2+\|\nabla (H(Q))\|_{L^2}^2\)
+C(\epsilon)\(\mathcal{B}(t)+1\)\|H(Q)\|_{L^2}^2\\
     &\leq \epsilon\(\|Au\|_{L^2}^2+\|\nabla (H()Q)\|_{L^2}^2\)
+C(\epsilon)\(\mathcal{B}(t)+1\)\mathcal{B}(t)\,.
   \end{split}
 \end{equation*}
The term $\mathcal{R}_2$ can be estimated in the same way and for 
$\mathcal{R}_3$ we use 
 \begin{equation*}
   \begin{split}
   |\mathcal{R}_3|
     \leq &2\|\nabla u\|_{L^4}\|\nabla^2 Q\|_{L^2}\|H(Q)\|_{L^4}\\
     \leq &C\|\nabla u\|^{\frac 12}_{L^2}\|\nabla^2 u\|^{\frac 12}_{L^2}
\(\|L(Q)\|_{L^2}+\|H( Q)\|_{L^2}\)
\|H(Q)\|_{L^2}^{\frac12}\|\nabla (H(Q))\|_{L^2}^{\frac12}\\
     \leq &C\(\|\nabla (H(Q))\|_{L^2}+\|\nabla^2 u\|_{L^2}\) 
\|\nabla u\|^{\frac 12}_{L^2}\(\|L(Q)\|_{L^2}+\|H( Q)\|_{L^2}\)
\|H(Q)\|_{L^2}^{\frac12}\\
     \leq &\epsilon\(\|\nabla (H(Q))\|_{L^2}^2+\|Au\|^2_{L^2}\)
+ C(\epsilon,u_0,Q_0)\mathcal{B}(t)\(1+\mathcal{B}(t)\)\,.
   \end{split}
 \end{equation*}
In order to estimate $\mathcal{J}_2+\mathcal{J}_4$ one just uses 
$\|u\|_{L^4}\leq C\|u\|_{L^2}^{\frac12}\|u\|_{H^1}^{\frac12}$ instead 
of \eqref{inter33} in the estimate \eqref{3dest}.  More precisely, we only need to improve the estimate  \eqref{i2est} by adapting \eqref{3dest} to the case $d=2$:
  \begin{equation}\label{3destnew}
   \begin{split}
    &\|\nabla u\|_{L^4}^2\|\nabla Q\|_{L^4}^2 \\
    &\leq  \|\nabla u\|_{L^2}\|\nabla u\|_{H^1}\|\nabla Q\|_{L^2}\|\nabla Q\|_{H^1}\\
    &\leq  C\|\nabla u\|_{L^2}\|A u\|_{L^2}\|\nabla Q\|_{L^2}\|\Delta Q\|_{L^2}\\
    &\leq \epsilon_1\|Au\|_{L^2}^2+C(\epsilon_1)\|\nabla Q\|_{L^2}^{2}\|\nabla u\|_{L^2}^{2} \|\Delta  Q\|_{L^2}^{2}\\
    &\leq \epsilon_1\|Au\|_{L^2}^2+C(\epsilon_1)\|\nabla u\|_{L^2}^{2} \(\|H(Q)\|_{L^2}^2+1\)\\
    &\leq \epsilon_1\|Au\|_{L^2}^2+C(\epsilon_1)\mathcal{B}(t) \(\mathcal{B}(t)+1\)
   \end{split}
 \end{equation}
The estimate of 
$\mathcal{J}_5$ stays the same.

\section{Estimates for lower-order terms due to the variable viscosity}\label{appendixA}

In the following let $\mathcal{J}_1,\ldots,\mathcal{J}_5$ be as in the proof of Proposition~\ref{approximate1}.
In this appendix we prove~\eqref{total1}, that is, we verify that
for $\delta>0$ there exists a constant $C=C(\delta, \tildeQ)$
such that
\begin{subequations}
\begin{align}
%
 \label{j2est}
  \mathcal{J}_1
&\,\leq \delta \|\nabla\Delta Q\|^2_{L^2}+C(\delta,\tildeQ)\|\Delta Q\|_{L^2}^2\,,
\\ \label{j33}
  \mathcal{J}_2
&\, \leq 2\varepsilon\barg{\frac {1}{5}\|\Delta ^2Q\|^2_{L^2}
+C(\tildeQ)\|  \nabla\Delta Q \|_{L^2}^2 }\,,
\\ \label{j44}
  \mathcal{J}_3
&\, \leq
 \varepsilon\barg{ \frac {1}{5}\|\Delta ^2Q\|^2_{L^2}
+C(\tildeQ)\|\nabla\Delta Q\|^2_{L^2} }\,,
\\ \label{j5est}
  \mathcal{J}_4
&\, \leq
\delta\|\nabla\Delta Q\|^2_{L^2}+C(\tildeQ,\delta)\|G\|^2_{H^1}\,,
\\ \label{j6est}
  \mathcal{J}_5
&\, \leq\delta \|Au\|_{L^2}^2+C(\delta)\|F\|_{L^2}^2\,.
\end{align}
\end{subequations}
In the estimates,
we frequently use Poincar\'{e}'s inequality
and the second boundary condition for $Q$, namely
$\Delta Q|_{\p\O}=0$.

\medskip



\medskip

\noindent
\textit{Proof of~\bref{j2est}:}
To estimate $\mathcal{J}_1$, we apply~\eqref{boundvis},
\begin{equation*}
  \begin{split}
\mathcal{J}_1
&=-\int_{\O}
\nabla \Delta Q_{\alpha\beta}\cdot\nabla(\nu(\tildeQ))\Delta Q_{\alpha\beta}
\leq \delta\,\|\nabla \Delta Q\|^2_{L^2}
+C(\delta)\|\nabla(\nu(\tildeQ))\Delta Q \|_{L^2}^2\\
&\leq  \delta\,\|\nabla \Delta Q\|^2_{L^2}+
C(\delta,\tildeQ)\||\nabla \tildeQ| \Delta Q \|_{L^2}^2.
  \end{split}
\end{equation*}
By H\"{o}lder's inequality, Sobolev embedding theorem,
\eqref{inter3} and~\eqref{boundvis},
\begin{equation*}
\begin{split}
    \||\nabla \tildeQ| \Delta Q \|_{L^2}
&\leq \|\nabla\tildeQ\|_{L^6}\|\Delta Q\|_{L^3}
\leq C\|\nabla\tildeQ\|_{H^1}
\|\Delta Q\|_{L^2}^{\frac 12}\|\nabla\Delta Q\|_{L^2}^{\frac 12}\\
    &\leq C( \tildeQ) \|\Delta Q\|_{L^2}^{\frac 12}\|\nabla\Delta Q\|_{L^2}^{\frac 12}\\
    &\leq \delta
\|\nabla\Delta Q\|_{L^2} +C(\tildeQ,\delta)\|\Delta Q\|_{L^2}\,,
\end{split}
\end{equation*}
and the combination of these two estimates implies the assertion.

\medskip

\noindent
\textit{Proof of~\bref{j33}:}
Young's inequality and~\eqref{boundvis} allow us to infer
\begin{equation*}
 \begin{split}
    \mathcal{J}_2
& =
-2\varepsilon\int_{\O}\Delta^2  Q_{\alpha\beta}
\nabla ( \nu(\tildeQ))\cdot\nabla\Delta Q_{\alpha\beta}
\leq  2\varepsilon\barg{ \frac {1}{10}\|\Delta ^2Q\|^2_{L^2}
+\frac52\||\nabla  \nu(\tildeQ)| \nabla\Delta Q \|_{L^2}^2 }\\
&\leq 2\varepsilon\barg{ \frac {1}{10}\|\Delta ^2Q\|^2_{L^2}
+C(\tildeQ)\||\nabla  \tildeQ| \nabla\Delta Q \|_{L^2}^2}\,.
\end{split}
\end{equation*}
It remains to estimate $\||\nabla  \tildeQ| \nabla\Delta Q \|_{L^2}$.
H\"{o}lder's inequality,~\eqref{inter3}, Sobolev embedding,
\eqref{ellipticest}, and Young's inequality imply that
\begin{equation*}
 \begin{split}
     \||\nabla  \tildeQ| \nabla\Delta Q \|^2_{L^2}
&\leq \|\nabla\tildeQ\|_{L^6}^2\|\nabla\Delta Q\|^2_{L^3}
\leq C \|\nabla\tildeQ\|^2_{H^1}\|\nabla\Delta Q\|_{L^2}
\|\nabla^2\Delta Q\|_{L^2} \\
&\leq C  \|\nabla\tildeQ\|^4_{H^1}
\|\nabla\Delta Q\|_{L^2}^2
+\frac{1}{10 }\,\|\Delta^2 Q\|_{L^2} ^2\\
 &\leq C( \tildeQ) \|\nabla\Delta Q\|_{L^2}^2
+\frac{1}{10 }\,\| \Delta^2 Q\|_{L^2} ^2\,.
 \end{split}
\end{equation*}
The combination of these two estimates implies the assertion.

\medskip

\noindent
\textit{Proof of~\bref{j44}:}
Analogously to the proof of~\bref{j33} we find 
\begin{equation*}
\begin{split}
   \mathcal{J}_3
&=-\varepsilon\int_{\O} \Delta ( \nu(\tildeQ))\Delta^2  Q :  \Delta Q
\leq \varepsilon\barg{ \frac {1}{10}\|\Delta ^2Q\|^2_{L^2}
+\frac52\||\Delta ( \nu(\tildeQ))| \Delta Q \|_{L^2}^2 }\\
  &\leq \varepsilon\Barg{ \frac {1}{10}\|\Delta ^2Q\|^2_{L^2}
+C(\tildeQ )\barg{ \||\Delta  \tildeQ| \Delta Q \|_{L^2}^2
+\||\nabla\tildeQ|^2 \Delta Q \|_{L^2}^2 } }\,.
\end{split}
 \end{equation*}
With~\eqref{inter4} and \eqref{ellipticest} 
one deduces
 \begin{equation*}
   \begin{split}
     \||\Delta  \tildeQ| \Delta Q \|_{L^2}^2
&\leq\|\Delta  \tildeQ\|^2_{L^2}\| \Delta Q \|_{L^{\infty}}^2
\leq C(\tildeQ ) \| \Delta Q \|_{H^1}\| \Delta Q \|_{H^2}\\
     &\leq C(\tildeQ ) \| \nabla \Delta Q \|_{L^2}\| \Delta^2 Q \|_{L^2}\\
     &\leq \frac{1}{10  }\|\Delta ^2Q\|^2_{L^2}
+C( \tildeQ ) \| \nabla \Delta Q \|_{L^2}^2\,.
   \end{split}
 \end{equation*}
By H\"{o}lder's inequality and Sobolev's embedding theorem 
\begin{equation*}
  \||\nabla\tildeQ|^2 \Delta Q \|_{L^2}^2\leq
\|\nabla\tildeQ\|^4_{L^6}\|\Delta Q\|^2_{L^6}
\leq C( \tildeQ)\|\nabla\Delta Q\|^2_{L^2}\,.
\end{equation*}
These three estimates together imply the assertion.

\medskip

\noindent
\textit{Proof of~\bref{j5est}:}
By assumption, $G\in L^2(0,T;H^1_0(\O;\mathbb{S}_0))$ and therefore
 \begin{equation*}
   \begin{split}
     \mathcal{J}_4
& =\int_{\O}G : \Delta \barg{ \nu(\tildeQ)\Delta Q}
\leq \|G\|_{H^1}\||\nabla(\nu(\tildeQ)|\Delta Q)\|_{L^2}\\
&\leq C( \tildeQ)\|G\|_{H^1}
\barg{ \||\nabla\tildeQ|\Delta Q\|_{L^2}+\|\nabla\Delta Q\|_{L^2} }
\\     &
\leq C( \tildeQ)\|G\|_{H^1}
\barg{ \|\nabla\tildeQ\|_{H^1}\|\nabla \Delta Q\|_{L^2}
+\|\nabla\Delta Q\|_{L^2} } \\
     &\leq C( \tildeQ)\|G\|_{H^1} \|\nabla\Delta Q\|_{L^2}
\leq \delta\|\nabla\Delta Q\|^2_{L^2}+C(\tildeQ,\delta)\|G\|^2_{H^1}\,,
   \end{split}
 \end{equation*}
as asserted.

\medskip

\noindent
\textit{Proof of~\bref{j6est}:}
This follows immediately from Young's inequality,
 \begin{equation*}
   \mathcal{J}_5
=-2\int_{\O}F \cdot Au
\leq\delta \|Au\|_{L^2}^2+C(\delta)\|F\|_{L^2}^2\,.
 \end{equation*}
The estimates~\eqref{j2est},~\eqref{j33},~\eqref{j44},
\eqref{j5est} and~\eqref{j6est} imply
\begin{align}
         \sum_{i=1}^5\mathcal{J}_i
\leq &2\delta\|\nabla\Delta Q\|^2_{L^2}
+C(\delta,\tildeQ)\barg{ \|F\|_{L^2}^2+\|G\|^2_{H^1}
+\|\Delta Q\|^2_{L^2} }\\
&+\frac{3\varepsilon}{5}\|\Delta^2Q\|^2_{L^2}
     +3\varepsilon C( \tildeQ)\|\nabla\Delta Q\|^2_{L^2}
+\delta \|Au\|_{L^2}^2\,.
\end{align}

\section{Estimates for the terms $\mathcal{I}_i, i=1,2,5,6$}\label{appendixB}
In the following let $\mathcal{I}_i$ be as in the proof of Proposition~\ref{approximate1}.
The estimate of $\mathcal{I}_1$ can be handled by using~\eqref{boundvis},~\eqref{important},
~\eqref{inter3},~\eqref{ellipticest},~\eqref{stokregular},
Poincar\'{e}'s inequality and Young's inequality,
\begin{equation}\label{i1}
\begin{split}
    \mathcal{I}_1
& = \int_{\O}\diverg\sigma( \tildeQ,  Q) \cdot
(2\Lsym{u} \nabla(\nu(\tildeQ))) \\
&\leq 2\|\sigma(\tildeQ,  Q)\|_{H^1}
\|\Lsym{u} \nabla(\nu(\tildeQ))\|_{L^2}\\
&\leq C(\tildeQ)\|\Delta Q\|_{H^1}\|\tildeQ\|^{\frac 12}_{H^1}
\|\tildeQ\|^{\frac 12}_{H^2}\,\bnorm{\,|\nabla\tildeQ|\,|\Lsym{u}|\,}_{L^2}\\
&\leq C(\tildeQ)\|\Delta Q\|_{H^1}\|\nabla\tildeQ\|_{L^6}
\|\Lsym{u} \|_{L^3}\\
&\leq C(\tildeQ)\|\Delta Q\|_{H^1}\|\nabla\tildeQ\|_{L^6}
\|\nabla u\|_{L^2}^{\frac 12}\|u\|_{H^2}^{\frac 12}\\
&\leq C(\tildeQ)\|\nabla\Delta Q\|_{L^2}
\|\nabla u\|_{L^2}^{\frac 12}\|Au\|_{L^2}^{\frac 12}\\
&\leq  \delta\barg{ \|\nabla\Delta Q\|^2_{L^2}+\|Au\|^2_{L^2} }
+C(\tildeQ,\delta)\|\nabla u\|_{L^2}^{2}\,.
\end{split}
\end{equation}
We can estimate $\mathcal{I}_2$ in a similar way with $\delta, \delta_1>0$
\begin{equation}\label{i2}
\begin{split}
\mathcal{I}_2&=
- \int_{\O}\sigma(\tildeQ,  Q) : \Delta u \otimes \nabla(\nu(\tildeQ))
\\
&\leq\delta_1\|\Delta u\|^2_{L^2}
+C( \tildeQ,\delta_1)
\bnorm{\,|\sigma(\tildeQ,  Q)|\,|\nabla \tildeQ |}^2_{L^2}\\
&\leq\delta_1\|\Delta u\|^2_{L^2}
+C( \tildeQ,\delta_1)\|\sigma(\tildeQ,  Q)\|_{L^3}^2
\|\nabla \tildeQ\|^2_{L^6}\\
&\leq\delta_1\|\Delta u\|^2_{L^2}+C( \tildeQ,\delta_1)
\|\tildeQ\|^2_{L^{\infty}}\|\Delta Q\|^2_{L^3}\|\nabla \tildeQ\|^2_{L^6}\\
&\leq\delta_1\| u\|^2_{H^2}+C(\tildeQ,\delta_1)
\|\Delta Q\|_{L^2} \|\nabla\Delta Q\|_{L^2}\\
&\leq\delta\|Au\|^2_{L^2}
+\delta \|\nabla\Delta Q\|^2_{L^2}+C( \tildeQ,\delta) \|\Delta Q\|_{L^2}^2\,
\end{split}
\end{equation}
provided that $\delta_1C_{\mathcal{S}}\leq \delta$ where $C_{\mathcal{S}}$ is the constant in \eqref{stokregular}.
H\"{o}lder's inequality,~\eqref{boundvis},~\eqref{inter3} and Young's 
inequality give for $\delta>0$
 \begin{equation}\label{i5}
\begin{split}
\mathcal{I}_5
&= \int_{\O} \nu(\tildeQ )\nabla u :
\barg{\Delta \tildeQ\Delta Q -\Delta Q\Delta \tildeQ} \\
   &\leq 2\|\nabla u\|_{L^6}\|\Delta\tildeQ\|_{L^2}\|\Delta Q\|_{L^3}\\
   &\leq C(\tildeQ)\|\nabla u\|_{H^1}
\|\Delta Q\|_{L^2}^{\frac 12}\|\nabla\Delta Q\|_{L^2}^{\frac 12}\\
    &\leq C(\tildeQ)\|Au\|_{L^2}
\|\Delta Q\|_{L^2}^{\frac 12}\|\nabla\Delta Q\|_{L^2}^{\frac 12}\\
   &\leq \delta \barg{ \|Au\|^2_{L^2}+\|\nabla\Delta Q\|^2_{L^2} }
+C(\delta,\tildeQ)\|\Delta Q\|_{L^2}^{2}\,.
\end{split}
\end{equation}
The estimate of $\mathcal{I}_6$ can be carried out similarly
and leads to the upper bound with $\delta>0$
\begin{equation}\label{i6}
  \begin{split}
  \mathcal{I}_6
\leq \delta \barg{ \|Au\|_{L^2}^2+\|\nabla\Delta Q\|^2_{L^2} }
+C(\delta,\tildeQ)\|\Delta Q\|_{L^2}^{2}\,.
  \end{split}
\end{equation}
The combination of~\eqref{i1},~\eqref{i2},~\eqref{i5} and~\eqref{i6} gives
\begin{equation*}
  \mathcal{A}
\leq 8\delta \barg{ \|Au\|_{L^2}^2+\|\nabla\Delta Q\|_{L^2} ^2 }
+C(\delta,Q_0)\barg{ \|\nabla u\|_{L^2}^2+\|\Delta Q\|_{L^2}^2 }\,.
\end{equation*}

\end{appendix}


\end{document}